\newcommand{\bB}{{\mathbb{B}}}
\newcommand{\bC}{{\mathbb{C}}}
\newcommand{\bD}{{\mathbb{D}}}
\newcommand{\bF}{{\mathbb{F}}}
\newcommand{\bN}{{\mathbb{N}}}
\newcommand{\bT}{{\mathbb{T}}}
  \newcommand{\A}{{\mathcal{A}}}
  \newcommand{\B}{{\mathcal{B}}}
  \newcommand{\E}{{\mathcal{E}}}
  \newcommand{\F}{{\mathcal{F}}}
  \newcommand{\G}{{\mathcal{G}}}
\renewcommand{\H}{{\mathcal{H}}}
  \newcommand{\K}{{\mathcal{K}}}  
\renewcommand{\L}{{\mathcal{L}}}
  \newcommand{\M}{{\mathcal{M}}}
\renewcommand{\O}{{\mathcal{O}}}
  \newcommand{\R}{{\mathcal{R}}}
\renewcommand{\S}{{\mathcal{S}}}
  \newcommand{\V}{{\mathcal{V}}}
  \newcommand{\W}{{\mathcal{W}}}
\newcommand{\fA}{{\mathfrak{A}}}
\newcommand{\fe}{{\mathfrak{e}}}
\newcommand{\fF}{{\mathfrak{F}}}
\newcommand{\fH}{{\mathfrak{H}}}
\newcommand{\fK}{{\mathfrak{K}}}
\newcommand{\fL}{{\mathfrak{L}}}
\newcommand{\fM}{{\mathfrak{M}}}
\newcommand{\fN}{{\mathfrak{N}}}
\newcommand{\fq}{{\mathfrak{q}}}
\newcommand{\fR}{{\mathfrak{R}}}
\newcommand{\fT}{{\mathfrak{T}}}
\newcommand{\rA}{\mathrm{A}}
\newcommand{\rC}{\mathrm{C}}
\renewcommand{\phi}{\varphi}
\newcommand{\upchi}{{\raise.35ex\hbox{$\chi$}}}
\newcommand{\lip}{\langle}
\newcommand{\rip}{\rangle}
\newcommand{\ip}[1]{\lip #1 \rip}
\newcommand{\ol}{\overline}
\newcommand{\bksl}{\backslash}
\newcommand{\Cstar}{\mathrm{C}^*}
\newcommand{\qand}{\quad\text{and}\quad}
\newcommand{\id}{\operatorname{id}}
\newcommand{\Hen}{\operatorname{Hen}}
\newtheorem{lemma}{Lemma}[section]
\newtheorem{theorem}[lemma]{Theorem}
\newtheorem{proposition}[lemma]{Proposition}
\newtheorem{corollary}[lemma]{Corollary}
\newtheorem{theoremx}{Theorem}
\theoremstyle{definition}
\newtheorem{example}{Example}
\newcommand{\wh}[1]{\widehat{#1}}
\newcommand{\tripnorm}[1]{{\left\vert\kern-0.25ex\left\vert\kern-0.25ex\left\vert #1 
    \right\vert\kern-0.25ex\right\vert\kern-0.25ex\right\vert}}
\begin{document}

\author{Rapha\"el Clou\^atre}
\email{raphael.clouatre@umanitoba.ca\vspace{-2ex}}
\author{Robert T. W. Martin}
\email{robert.martin@umanitoba.ca\vspace{-2ex}}
\author{Edward J. Timko}
\email{edward.timko@umanitoba.ca\vspace{-2ex}}
\address{Department of Mathematics, University of Manitoba, Winnipeg, Manitoba, Canada R3T 2N2}

\thanks{
	R.C. was partially supported by an NSERC Discovery Grant. R.T.W.M. was partially supported by an NSERC Discovery Grant. E.J.T. was partially supported by a PIMS postdoctoral fellowship.
}

\title[Analytic functionals for the nc disc algebra]{Analytic functionals for the non-commutative disc algebra}

\subjclass[2010]{Primary 47L50, 47L55, 46L51, 46L52}
\keywords{Analytic functionals, F\&M Riesz theorem, non-commutative disc algebra, Cuntz algebra}

\begin{abstract}
The main objects of study in this paper are those functionals that are analytic in the sense that they annihilate the non-commutative disc algebra. In the classical univariate case, a theorem of F. and M. Riesz implies that such functionals must be given as integration against an absolutely continuous measure on the circle. We develop generalizations of this result to the multivariate non-commutative setting, upon reinterpreting the classical result. In one direction, we show that the GNS representation naturally associated to an analytic functional on the Cuntz algebra cannot have any singular summand. Following a different interpretation, we seek weak-$*$ continuous extensions of analytic functionals on the free disc operator system. In contrast with the classical setting, such extensions do not always exist, and we identify the obstruction precisely in terms of the so-called universal structure projection. We also apply our ideas to commutative algebras of multipliers on some complete Nevanlinna--Pick function spaces.
\end{abstract}

\maketitle

\section{Introduction} 
The classical analytic measure theorem of F. and M. Riesz is a cornerstone of  harmonic analysis on the complex  open  unit disc $\bD$. It implies that the Borel measures on the unit circle $\bT$ that annihilate the disc algebra $\rA(\bD)$ -- the so-called \emph{analytic} measures -- must all be absolutely continuous with respect to Lebesgue measure $\lambda$ \cite[page 47]{hoffman1969}. This result has had an impact in operator theory as well. Indeed, it is a key tool in characterizing certain Hilbert space contractions that admit a rich functional calculus \cite{nagy2010},\cite{bercovici1988}. 

Another formulation of the F. and M. Riesz theorem can be given in terms of the $\rC^*$-algebra $\rC(\bT)$ of continuous functions viewed inside the abelian von Neumann algebra $L^\infty(\bT,\lambda)$: any bounded linear functional on $\rC(\bT)$ that annihilates $\rA(\bD)$ must extend weak-$*$ continuously to $L^\infty(\bT,\lambda)$. This rephrasing naturally prompts potential generalizations to more abstract operator algebraic settings.
The current paper is a contribution to this program. 

 Let us describe our framework more precisely.  Let $d\geq 1$ be an integer and let $L=(L_1,\ldots,L_d)$ denote the left free shift acting on the Fock space $\fF^2_d$ over $\bC^d$; this is given by the left regular representation of the free monoid $\bF_d^+$. The unital norm closed operator algebra generated by $L_1,\ldots,L_d$ is Popescu's algebra  \cite{popescu1991}, which we denote by $\A_d$. We will be working with various other objects generated by $\A_d$ inside of $B(\fF^2_d)$. Indeed, we consider both the norm closed and the weak-$*$ closed operator system generated by $\A_d$, which we denote respectively by $\S_d$ and $\V_d$.  We also define $\fT_d=\rC^*(\A_d)\subset B(\fF^2_d)$; this is the Cuntz--Toeplitz algebra. This $\rC^*$-algebra contains the ideal of compact operators, and the corresponding quotient is the Cuntz algebra $\O_d$. Let $q:\fT_d\to \O_d$ denote the quotient map. It is well know that $q$ is completely isometric on $\S_d$ \cite{popescu1996}.
 
 Recall now that $\O_1\cong \rC(\bT)$, and under this identification we have that $q(\A_1)$ coincides with $\rA(\bD)$. For this reason, $\A_d$ is often referred to as the \emph{non-commutative disc algebra}, and $\S_d$ as the \emph{free disk operator system}. Naturally, $\A_d$ and $q(\A_d)$ will then serve as our multivariate replacements for $\rA(\bD)$. When $d=1$, we have that $q(\S_1)=\O_1$. Thus, both $\O_d$ and $\S_d\cong q(\S_d)$ are sensible multivariate replacements for $\rC(\bT)$. 
 
 %
Keeping the aforementioned analogies in mind, the driving force behind our work, generally speaking, is to understand the structure of those bounded functionals $\tau$ on $\O_d$ or $\S_d$ that annihilate the copy of $\A_d$. From this point of view, what we are after is a multivariate non-commutative F. and M. Riesz theorem. In doing so, we further reinforce the philosophy that questions surrounding $\A_d$ should be thought of as non-commutative Hardy space theory.

It should be acknowledged that similar investigations have been undertaken in the past, see for instance \cite{exel1990},\cite{BL2007},\cite{BL2007p},\cite{ueda2009} and the references therein.  Our framework is readily seen to be disjoint from that of these previous works when $d>1$. Our study is close in spirit to that of \cite{KY2014} (especially Corollary 4.2 therein), but our results are of a different nature altogether as we are interested in functionals defined on a self-adjoint object containing $\A_d$.  Thus, there is no obvious overlap between our work and \cite{KY2014}.

Our efforts aim to show that the condition that $\tau$ annihilates $\A_d$ should guarantee that $\tau$ is ``absolutely continuous". A major subtlelty inherent to this question is to meaningfully interpret absolute continuity in our context. This will be accomplished via the classification theory of $*$-representations of $\O_d$ initiated in \cite{davidson2001},\cite{davidson2005} that culminated with the deep results of \cite{kennedy2013}. Therein, notions of absolute continuity and singularity were introduced for a $*$-representation of the Cuntz algebra, and some of our main results will be given in those terms. Notably, these two notions are not complementary when $d>1$; indeed, in the multivariate setting there emerges a third type for representations, called dilation type, whose presence complicates the theory significantly. It is this difference between the classical and multivariate settings that accounts for much of the new behavior that we record in this paper. Section \ref{S:Prelims} contains additional details on these matters and further background material. 

Care must be taken to interpret  the statement of the classical F. and M. Riesz theorem in a way that is susceptible to generalization. This is done in Section \ref{S:classFM}, which serves as a springboard for our original contributions to follow.

In Section \ref{S:FMRCNS}, we develop our first generalization of the classical F. and M. Riesz theorem. We interpret the latter as producing a ``non-singular" representation for any analytic functional on $\O_1$. We show that this paradigm is still valid in the non-commutative multivariate setting of $\O_d$. Our main tool for this purpose is the following. Let $\tau$ be a bounded linear functional on $\O_d$. Let $\pi:\O_d\to B(\fH)$ be a unital $*$-representation and $\xi,\eta\in \fH$ be vectors with norm $\|\tau\|^{1/2}$. Let $\fM=\ol{\pi(q(\A_d))\xi}$ and let $\rho:\A_d\to B(\fM)$ be the corresponding representation induced  by $\pi$. We say that the triple $(\pi,\xi,\eta)$ is a \emph{Riesz representation} for $\tau$ if $\xi$ is cyclic for $\pi$, if $\rho$ is unitarily equivalent to the identity representation of $\A_d$, and if
\[
\tau(t)=\langle \pi(t)\xi,\eta\rangle, \quad t\in \O_d.
\]
The following is our first main result, appearing as Theorem \ref{T:MCEFMR} below.
\begin{theoremx}\label{T:mainA}
Let $\tau$ be a bounded linear functional on the Cuntz algebra $\O_d$. If $\tau$ annihilates $\A_d$, then $\tau$ admits an essentially unique Riesz representation. Furthermore, this Riesz representation has no singular summand.
\end{theoremx}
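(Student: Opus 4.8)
My plan is to produce the representation by a polar-decomposition-plus-GNS construction, read off the shift structure from the annihilation hypothesis together with cyclicity, and then use Kennedy's classification to eliminate a singular summand; this last step is the analytic heart of the matter and is where I expect the genuine difficulty to lie. Normalising $\|\tau\|=1$, I would take the polar decomposition $\tau=|\tau|(u\,\cdot\,)$ inside the enveloping von Neumann algebra $\O_d^{**}$, with $|\tau|$ a state and $u$ a partial isometry, and run the GNS construction on the positive functional $|\tau|$ to obtain a cyclic representation $(\pi,\H,\xi)$ with $|\tau|(t)=\langle\pi(t)\xi,\xi\rangle$ and $\|\xi\|=1$. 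Putting $\eta=\pi''(u^*)\xi$, where $\pi''$ is the normal extension of $\pi$ to $\O_d^{**}$, gives $\tau(t)=\langle\pi(t)\xi,\eta\rangle$ with $\xi$ cyclic for $\pi$, and a routine computation with the support projection of $|\tau|$ yields $\|\eta\|=1$. Writing $S_i=\pi(q(L_i))$, the Cuntz relations make $(S_1,\dots,S_d)$ a row unitary on $\H$; since $L_i\A_d\subseteq\A_d$, the subspace $\fM=\ol{\pi(q(\A_d))\xi}$ is invariant for each $S_i$, so $V_i=S_i|_\fM$ is a row isometry representing $\A_d$ as $\rho$. Because $1\in\A_d$, the hypothesis that $\tau$ annihilates $\A_d$ is exactly the orthogonality relation $\eta\perp\fM$.

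\emph{Reductions.} Let $\pi=\pi_{ac}\oplus\pi_d\oplus\pi_s$ be Kennedy's reducing decomposition into absolutely continuous, dilation, and singular parts, and let $P_s$ be the central projection onto the singular summand $\H_s$. Since $P_s\in\pi(\O_d)'$ and $\xi$ is cyclic, one has $\H_s=\ol{\pi(\O_d)P_s\xi}$, so that \emph{proving the absence of a singular summand reduces to showing $P_s\xi=0$}. The multiplicity of the shift part of $V$ is immediate from cyclicity: every word $V_w$ with $|w|\ge1$ maps into $\sum_i V_i\fM$, so the wandering space $\fM\ominus\sum_iV_i\fM$ is the one-dimensional span of the projection of $\xi$ onto it. Thus, granting purity (vanishing of the Cuntz summand of $V$), the map $V_w\xi\mapsto L_w\Omega$ onto the vacuum-generated basis is the unitary implementing $\rho\cong L$, the identity representation.

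\emph{The analytic core.} It remains to prove $P_s\xi=0$ and the purity of $V$; classically these correspond, respectively, to the F.\ and M.\ Riesz conclusion $|\mu|\ll\lambda$ and to the Szego condition $\log|h|\in L^1$ that holds automatically for the modulus of a nonzero $H^1$ density. The mechanism I would exploit is that on the singular summand the $\wot$-closed algebra generated by $\pi_s(q(\A_d))$ is self-adjoint, coinciding with the von Neumann algebra $\pi_s(\O_d)''$; hence any normal functional that annihilates $\pi_s(q(\A_d))$ must vanish, and this is what ought to force $\xi$ to have no singular component. \emph{The main obstacle is that the annihilation $\tau\of q|_{\A_d}=0$ is global, and since $\fM$ is invariant but not reducing it does not visibly restrict to the singular block}; reconciling this is precisely the step that cannot be carried out formally and that requires the characterization of absolutely continuous functionals from the Davidson--Li--Pitts and Kennedy theory. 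I would resolve it by transporting the analyticity of $\tau$ to an absolute-continuity statement for $|\tau|$ through the polar data $\eta=\pi''(u^*)\xi$, thereby obtaining $P_s\xi=0$; purity of $V$ would follow from the same analytic input as the non-commutative replacement for log-integrability.

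\emph{Essential uniqueness.} Given two Riesz representations, the condition $\rho\cong L$ fixes the action of $\A_d$ on the cyclic subspace as the free shift with $\xi$ corresponding to the vacuum, so each $\pi$ is a minimal Cuntz dilation of $L$ generated by its cyclic vector. Such dilations are unique up to a unitary carrying $\xi$ to $\xi'$ and intertwining the representations, by the standard uniqueness of minimal dilations. Since $\xi$ is cyclic for $\pi$, the identity $\langle\pi(t)\xi,\eta\rangle=\tau(t)$ determines $\eta$ from $\tau$, so the same unitary carries $\eta$ to $\eta'$, giving the asserted essential uniqueness.
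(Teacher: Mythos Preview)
Your setup via polar decomposition and GNS is exactly right, and you have correctly isolated the two analytic facts that need proof: purity of the induced row isometry $\rho$ on $\fM=\ol{\pi(q(\A_d))\xi}$, and the absence of a singular summand. However, the argument you sketch for these two points does not close. The paper's mechanism for purity (Lemma~\ref{L:Main}) is quite different from what you propose and does not go through Kennedy's classification at all. One Wold-decomposes $\rho$ as $\id^{(\kappa)}\oplus(\sigma\circ q)$ on $\fF_d^{2(\kappa)}\oplus\fK$; the key observation is that the compression of the \emph{ambient} representation $\pi$ to $\fK$ is already a $*$-homomorphism (since its values on $L_1,\dots,L_d$ are Cuntz isometries, the Schwarz inequality forces the multiplicative domain to be everything), and hence by an Agler-type argument $\fK$ is \emph{reducing} for $\pi(\fT_d)$. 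Once the projection $P$ onto $\fK$ commutes with $\wh\pi(f^*)$, the polar identity $\wh\pi(f^*f)\xi=\xi$ together with $\eta\perp\fM\supset\fK$ gives $P\pi(a)\xi=\wh\pi(af^*)P\eta=0$ for all $a\in\A_d$, whence $\fK=0$. Cyclicity then forces $\kappa=1$. Your proposal does not contain this reducing-subspace idea, and the vague appeal to ``transporting analyticity to $|\tau|$'' and a ``non-commutative log-integrability'' does not substitute for it. Once purity is established, ``no singular summand'' is a two-line consequence (Proposition~\ref{P:CNSrestriction}): the induced representation on $\fM$ is absolutely continuous, so $\wh\pi(\fq)\xi=0$, and since any singular reducing projection $P_s$ satisfies $P_s=P_s\wh\pi(\fq)$, cyclicity of $\xi$ kills $P_s$. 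Kennedy's full Lebesgue decomposition is not needed.

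Your uniqueness argument is also incorrect. You claim that minimal Cuntz dilations of the Fock shift are unique up to a unitary fixing the vacuum, but for $d\ge 2$ this is false: the atomic representations of Example~\ref{E:AtomicD} (for varying $\lambda$) are pairwise inequivalent irreducible Cuntz representations, each containing a wandering vector whose $\A_d$-cyclic subspace is a copy of $\fF^2_d$; indeed Corollary~\ref{C:NoOneRep} exploits exactly this non-uniqueness. The correct route (Lemma~\ref{L:Subrep}) is to pick any norming contraction $f\in\O_d^{**}$ with $\wh\tau(f)=\|\tau\|$; the equality case of Cauchy--Schwarz then forces $\wh\pi_j(f)\xi_j=\eta_j$ and $\wh\pi_j(f)^*\eta_j=\xi_j$, from which one computes $\langle\pi_1(t)\xi_1,\xi_1\rangle=\langle\pi_2(t)\xi_2,\xi_2\rangle$ for all $t$, and GNS uniqueness finishes. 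The point is that uniqueness is tied to the specific functional $\tau$ through $|\tau|$, not to the abstract dilation picture.
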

The classical theorem gives in fact a bit more: in that case, the $*$-representation of $\O_1$ given by the bilateral shift on $L^2(\bT,\lambda)$ can serve as a Riesz representation for \emph{all} functionals annihilating $\A_1$. Interestingly, there is no such universal Riesz representation in the multivariate setting, as shown in Corollary \ref{C:NoOneRep}.


Section \ref{S:FMRnAC} contains our second main result. Therein, we adopt the point of view that the classical F. and M. Riesz theorem yields the existence of weak-$*$ continuous extensions to $L^\infty(\bT,\lambda)$ for analytic functionals on $\rC(\bT)$. As explained in Section \ref{S:classFM}, this is equivalent to the existence of weak-$*$ continuous extensions to $\V_1$ for analytic functionals on $\S_1$. Whether or not an analogous extension statement holds for the inclusion $\S_d\subset \V_d$ is the focus of this section. As a first indication that this may be the case, given an analytic functional $\tau$ on $\O_d$, we manage to show that the restriction of the absolute value $|\tau|$ to $\S_d$ extends weak-$*$ continuously to $\V_d$ (Theorem  \ref{T:TdAbsValAC}). Of course, it is elementary to verify, in the classical univariate setting, that this statement about the absolute value is equivalent to the F. and M. Riesz theorem. Unfortunately, the situation is more complicated in general. Indeed, when $d\geq 2$, we exhibit an example of an analytic functional on $\O_d$ that annihilates $\A_d$ yet does not extend weak-$*$ continuously to $\V_d$ (see Example \ref{E:NoACForFree}).

As mentioned previously, a functional annihilating $\A_d$ admits a Riesz representation without singular summands. If, in addition, we knew that the representation contained no summand of dilation type, then necessarily $\tau$ would extend weak-$*$ continuously to $\V_d$. Thus, the presence of a dilation type summand is central to the issue investigated in this section. This is further reinforced by the pathological construction in Example \ref{E:NoACForFree}, which involves a representation of dilation type. In light of these remarks, one may venture the guess that the lack of weak-$*$ continuous extensions is equivalent to the presence of a dilation type summand in the Riesz representation of the functional. However, we show in Corollary \ref{C:dilationextends} that this is not the case, so  that there is seemingly no obvious relationship between the type of the Riesz representation of an analytic functional and the possibility of extending it weak-$*$ continuously. The main result of this section (Theorem \ref{T:ACComm})  clarifies the picture and identifies the obstruction to the existence of such extensions precisely in terms of the universal structure projection $\fq$ of $\A_d^{**}$ (see Subsection \ref{SS:ncda} for details on this projection).

\begin{theoremx}\label{T:mainB}
Let $\tau$ be a bounded linear functional on $\S_d$ that annihilates $\A_d$. Then, $\tau$ extends weak$^*$-continuously to $\V_d$ if and only if it also annihilates $a^*\fq-\fq a^*$ for each $a\in\A_d$.
\end{theoremx}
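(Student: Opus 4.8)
The plan is to characterize the weak-$*$ continuous extensions of $\tau$ through the bidual $\S_d^{**}$ and its relationship with the universal structure projection $\fq$. The key observation is that $\V_d$ is the weak-$*$ closure of $\S_d$ inside $B(\fF^2_d)$, so by general operator space duality, $\tau$ extends weak-$*$ continuously to $\V_d$ precisely when the canonical extension $\tau^{**}$ to $\S_d^{**}$ factors through the central projection of $\S_d^{**}$ that implements the von Neumann algebra generated by $\S_d$ in its universal representation; equivalently, $\tau^{**}$ must vanish on the complementary summand. The first step, therefore, is to set up this abstract extension criterion carefully: I would identify the weak-$*$ continuous functionals on $\V_d$ with a weak-$*$ closed subspace of $\S_d^{**}$ and phrase extendability as the condition that $\tau$, viewed as an element of $\S_d^*$, is orthogonal to the relevant predual complement.

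Next I would bring in the universal structure projection $\fq\in\A_d^{**}$. The point here is that $\fq$ encodes exactly the ``singular'' part of the universal representation of $\A_d$; the summand that obstructs weak-$*$ continuity is governed by $\fq$. The strategy is to translate the abstract orthogonality condition from the previous step into a concrete condition involving $\fq$. Since $\tau$ already annihilates $\A_d$, and hence annihilates $\A_d^{**}$ in the appropriate weak-$*$ sense, the only freedom left lies in how $\tau$ interacts with the self-adjoint part of $\S_d$, i.e.\ with elements of the form $a^*$ for $a\in\A_d$. I would compute the action of $\tau^{**}$ on products involving $\fq$ and elements $a^*$, using that $\fq$ is a projection and exploiting the multiplicative structure. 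The commutator $a^*\fq-\fq a^*$ arises naturally: it measures the failure of $a^*$ to respect the decomposition induced by $\fq$, and this is exactly the data that survives in the obstruction.

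**The main obstacle** I expect is establishing the precise identity that links the abstract extension criterion to vanishing on the commutators $a^*\fq-\fq a^*$. The forward direction, that weak-$*$ continuous extendability forces $\tau(a^*\fq-\fq a^*)=0$, should follow once I express these commutators as weak-$*$ limits of elements of $\S_d$ and use continuity together with the fact that $\fq$ is the structure projection; the delicate point is verifying that $a^*\fq-\fq a^*$ indeed lies in (the weak-$*$ closure of) $\S_d$ and that $\tau^{**}$ is well-defined on it. The converse direction is harder: assuming $\tau$ annihilates all such commutators, I must reconstruct an honest weak-$*$ continuous extension. Here the plan is to use the annihilation condition to show that $\tau^{**}$ vanishes on the obstructing summand of $\S_d^{**}$, which amounts to proving that the span of $\A_d$, $\A_d^*$, and the commutators $\{a^*\fq-\fq a^*\}$ together generate (after weak-$*$ closure) the entire complement of the von Neumann algebra summand.

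I would rely throughout on the structural results recalled earlier in the paper, in particular the complete isometry of $q$ on $\S_d$ and the description of the universal structure projection $\fq$ from Subsection \ref{SS:ncda}. The computation with $\fq$ should reduce the problem to a statement purely about the projection and the generators, at which point the symmetry between the two implications becomes transparent: the commutator condition is self-adjoint and respects the involution on $\S_d$, matching the self-adjoint nature of the operator system, which is what makes the equivalence clean.
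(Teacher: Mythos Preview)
Your outline gestures in the right direction but misses the two concrete ingredients that actually make the proof work, and without them the converse direction has a genuine gap.

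First, the identity you need is $\tau(s)=\widehat\tau(s(I-\fq))$ for all $s\in\S_d$, and this does \emph{not} follow from abstract duality considerations or from the hypothesis $\A_d\subset\ker\tau$ alone. The paper obtains it by applying the GNS construction to (a Hahn--Banach extension of) $\tau$, writing $\tau(s)=\langle\pi(s)\xi,\eta\rangle$ with $\eta\perp\pi(\A_d)\xi$, and then invoking Lemma~\ref{L:Main}: the Wold decomposition together with a multiplicative-domain argument forces the induced representation on $\ol{\pi(\A_d)\xi}$ to be the identity, hence $\widehat\pi(\fq)\xi=0$. Your plan never mentions any representation-theoretic input, and without it you only know $\widehat\tau(\fq b^*)=\widehat\tau(b^*\fq)$ from the commutator hypothesis, not that either side vanishes or that $\tau(b^*)=\widehat\tau(b^*(I-\fq))$.

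Second, once one has $\tau(a+b^*)=\widehat\tau(a(I-\fq)+(I-\fq)b^*)$, the extension is not produced by showing that some span ``generates the entire complement'' in $\S_d^{**}$. Rather, the paper invokes Lemma~\ref{L:extVd}, which rests on the rigidity theorem (Theorem~\ref{T:rigidKennedy}, ultimately Kennedy's results): there is a canonical weak-$*$ homeomorphic isomorphism $\Psi$ from the weak-$*$ closure of $\{a(I-\fq)+(I-\fq)b^*\}$ onto $\V_d$, and the extension is $\widehat\tau\circ\Psi^{-1}$. This rigidity is a deep structural fact specific to $d\geq 2$ (with a separate argument for $d=1$), not a general operator-space duality statement; your proposal does not invoke it. The forward direction you sketch is essentially correct and matches the paper: pick a net $(a_i)$ in $\A_d$ converging weak-$*$ to $\fq$, observe via property~(b) of $\fq$ that $(a_ib)$ and $(ba_i)$ converge to $0$ in $B(\fF^2_d)$, and use the assumed weak-$*$ continuity.
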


In accordance with the classical F. and M. Riesz theorem, the condition above is automatically satisfied in the univariate setting. Indeed, since $\A_1$ is commutative we have that $a^*\fq-\fq a^*=0$ for every $a\in\A_1$.


In the last section, Section \ref{S:Comm}, we adapt some of our ideas to the setting of commutative algebras of multipliers for some Hilbert function spaces. Exploiting the recent developments of \cite{DavHar2020}, we investigate the existence of weak-$*$ continuous extensions for functionals that are analytic in an appropriate sense (Theorem \ref{T:CommFMR}). As an application, we manage to prove in Corollary \ref{C:HenA=HenTA} a certain symmetry property of the set of so-called Henkin functionals. Such functionals have been a topic of recent interest, as they serve as a bridge between function theory and multivariate operator theory (see \cite{CD2016duality},\cite{CD2016abscont},\cite{BicHarMcC2018},\cite{DavHar2020} and the references therein). For most Hilbert function spaces, their exact nature is not yet fully understood, and our result sheds light on this question.

\section{Preliminaries}\label{S:Prelims} 

Throughout the paper, $\fH$ always denotes a Hilbert space and $B(\fH)$ denotes the $\rC^*$-algebra of bounded linear operators on it.

\subsection{Bidual of $\rC^*$-algebras and polar decomposition}\label{SS:C*alg}
Let $X$ be a normed space. We denote its dual space by $X^*$. If $\tau:X\to B(\fH)$ is a bounded linear map, then we denote by $\widehat\tau:X^{**}\to B(\fH)$ its unique weak-$*$ continuous linear extension.

Let $\fT$ be a $\rC^*$-algebra. Then, the bidual $\fT^{**}$ can be given the structure of a von Neumann algebra (see \cite[Theorem A.5.6]{BLM2004}). If $\A\subset \fT$ is a subalgebra of $\fT$, then $\A^{**}$ is a weak-$*$ closed subalgebra of $\fT^{**}$ and $\A$ is a subalgebra of $\A^{**}$. If $\pi:\fT\to B(\fH)$ is a $*$-representation, then $\widehat\pi:\fT^{**}\to B(\fH)$ is a weak-$*$ continuous $*$-representation.

Let $\tau:\fT\to \bC$ be a bounded linear functional. Then, $\widehat\tau:\fT^{**}\to \bC$ is a weak-$*$ continuous linear functional. In particular, it admits a polar decomposition. More precisely, there is a positive weak-$*$ continuous functional $\omega_\tau$ on $\fT^{**}$ along with a partial isometry $f\in \fT^{**}$ such that
\[
\widehat\tau(w)=\omega_{\tau}(f^* w), \quad w\in \fT^{**}.
\]
We remark that this is a slight departure from the convention used for instance in \cite[Theorem 1.14.4]{sakai1971} or \cite[Theorem III.4.2]{takesaki2002}, but is in line with that found in \cite[Definition 12.2.7]{dixmier1977}. At any rate, it is elementary to see that one convention is related to the other by simply considering the adjoint functional.

The restriction of $\omega_\tau$ to $\fT$ will be referred to as the \emph{absolute value} of $\tau$ and be denoted by $|\tau|$. We have that 
\[
\|\tau\|=\omega_\tau(f^*f)=\omega_\tau(I)=\widehat\tau(f).
\]
For ease of reference, we record the following standard fact.

\begin{lemma}
	Let $\fT$ be a unital $\Cstar$-algebra and let $\tau\in\fT^*$.
	Then, there exist a unital $*$-representation $\pi:\fT\to B(\fH)$, a vector $\xi\in\fH$ with $\|\xi\|=\|\tau\|^{1/2}$ that is cyclic for $\pi$ and a partial isometry $f\in\fT^{**}$ such that  $\wh{\pi}(f^*f)\xi=\xi$, 
	\[ \widehat\tau(w)=\ip{\widehat\pi(w)\xi,\wh{\pi}(f)\xi}, \qand  \omega_\tau (w)=\ip{\widehat\pi(w)\xi,\xi}, \quad w\in\fT^{**}\]
	for every $w\in \fT^{**}$.
	\label{L:PolarForm}
\end{lemma}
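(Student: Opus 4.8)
The plan is to obtain $\pi$ and $\xi$ from a standard GNS construction applied to the absolute value $|\tau|$, and then to read off the remaining assertions directly from the polar decomposition of $\widehat\tau$ recorded just above the statement. Throughout I regard $\fT$ as weak-$*$ densely embedded in $\fT^{**}$ via the canonical map, and I use freely the stated fact that the unique weak-$*$ continuous extension $\wh\pi$ of a $*$-representation $\pi$ is itself a weak-$*$ continuous $*$-representation.

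First I would take $\omega_\tau$ and the partial isometry $f\in\fT^{**}$ to be the data furnished by the polar decomposition, so that $\widehat\tau(w)=\omega_\tau(f^*w)$ for all $w\in\fT^{**}$, with $\omega_\tau$ positive and weak-$*$ continuous and $\|\tau\|=\omega_\tau(f^*f)=\omega_\tau(I)$. The restriction $|\tau|=\omega_\tau|_\fT$ is then a positive functional on the unital $\Cstar$-algebra $\fT$ with $|\tau|(I)=\|\tau\|$. Performing the GNS construction for $|\tau|$ produces a unital $*$-representation $\pi:\fT\to B(\fH)$ together with a cyclic vector $\xi\in\fH$ satisfying $\ip{\pi(a)\xi,\xi}=|\tau|(a)$ for $a\in\fT$; in particular $\|\xi\|^2=|\tau|(I)=\|\tau\|$, as required. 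I then pass to the weak-$*$ continuous extension $\wh\pi:\fT^{**}\to B(\fH)$.

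Next I would show that $\omega_\tau(w)=\ip{\wh\pi(w)\xi,\xi}$ for every $w\in\fT^{**}$. Both sides define weak-$*$ continuous functionals on $\fT^{**}$, and they agree on $\fT$ since $\ip{\wh\pi(a)\xi,\xi}=\ip{\pi(a)\xi,\xi}=|\tau|(a)=\omega_\tau(a)$ there; as $\fT$ is weak-$*$ dense in $\fT^{**}$, the two functionals must coincide. Using that $\wh\pi$ is a $*$-representation, so that $\wh\pi(f^*w)=\wh\pi(f)^*\wh\pi(w)$, this immediately yields
\[
\widehat\tau(w)=\omega_\tau(f^*w)=\ip{\wh\pi(f^*w)\xi,\xi}=\ip{\wh\pi(w)\xi,\wh\pi(f)\xi}, \qquad w\in\fT^{**}.
\]

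Finally I would verify that $\wh\pi(f^*f)\xi=\xi$. Since $f^*f$ is a projection, so is $I-f^*f$, and hence
\[
\|\wh\pi(I-f^*f)\xi\|^2=\ip{\wh\pi(I-f^*f)\xi,\xi}=\omega_\tau(I-f^*f)=\omega_\tau(I)-\omega_\tau(f^*f)=\|\tau\|-\|\tau\|=0,
\]
so that $\wh\pi(f^*f)\xi=\xi$. The only genuinely delicate point is the identification in the third step — upgrading agreement on $\fT$ to agreement on all of $\fT^{**}$ — which rests on the weak-$*$ density of $\fT$ together with the weak-$*$ continuity (normality) of both functionals; every other assertion is then forced by the polar decomposition identities $\|\tau\|=\omega_\tau(f^*f)=\omega_\tau(I)$ already in hand.
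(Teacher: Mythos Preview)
Your proof is correct and follows essentially the same route as the paper: GNS applied to $|\tau|$ to produce $(\pi,\xi)$, the weak-$*$ density argument to pass from $\fT$ to $\fT^{**}$, and then the polar identity $\omega_\tau(f^*f)=\omega_\tau(I)$ to conclude $\wh\pi(f^*f)\xi=\xi$. The only cosmetic difference is that the paper phrases the last step via the equality case of Cauchy--Schwarz, whereas you compute $\|\wh\pi(I-f^*f)\xi\|^2$ directly; these are the same argument.
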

\begin{proof}
Applying the GNS construction to $|\tau|$, we obtain a $*$-representation $\pi:\fT\to B(\fH)$ along with a vector $\xi\in\fH$ with 
\[
\|\xi\|^2=|\tau|(I)=\omega_\tau(I)=\|\tau\|
\]
which is cyclic for $\pi$ and such that
\[
|\tau|(t)=\ip{\pi(t)\xi,\xi}, \quad t\in \fT.
\]
From this, it readily follows that
\[
\omega_{\tau}(w)=\ip{\widehat\pi(w)\xi,\xi}, \quad w\in \fT^{**}.
\]
whence
\[
\widehat\tau(w)=\omega_\tau(f^*w)=\ip{\widehat\pi(w)\xi,\widehat\pi(f)\xi}, \quad w\in \fT^{**}.
\]
Finally, because
\[
\langle \widehat\pi(f^*f)\xi,\xi \rangle=\omega_\tau(f^*f)=\|\tau\|=\|\xi\|^2
\]
the Cauchy--Schwarz inequality forces $\widehat\pi(f^*f)\xi=\xi$.
\end{proof}

We also require the following elementary uniqueness statement.

\begin{lemma}
	Let $\fT$ be a $\Cstar$-algebra and let $\tau:\fT\to\bC$ be a bounded linear functional. Assume that for $j=1,2$ we have a $*$-representation $\pi_j:\fT\to B(\fH_j)$ and vectors $\xi_j,\eta_j\in \fH_j$ with $\|\xi_j\|=\|\eta_j\|=\|\tau\|^{1/2}$ such that $\xi_j$ is cyclic for $\pi_j$ and	
		\[ \ip{\pi_1(t)\xi_1,\eta_1}=\tau(t)=\ip{\pi_2(t)\xi_2,\eta_2}, \quad t\in\fT. \]
	Then, there is a unitary operator $U:\H_1\to\H_2$ such that $U\xi_1=\xi_2, U\eta_1=\eta_2$ and 
	\[
	U\pi_1(t)U^*=\pi_2(t), \quad t\in \fT.
	\]
	\label{L:Subrep}
\end{lemma}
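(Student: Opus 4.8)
The plan is to reduce the statement to the standard uniqueness of the GNS construction for a \emph{positive} functional. The only genuine content is to prove that the two positive functionals $\omega_j(\,\cdot\,):=\langle\pi_j(\,\cdot\,)\xi_j,\xi_j\rangle$ on $\fT$ coincide; once this is known, a unitary can be built by the usual formula $\pi_1(t)\xi_1\mapsto\pi_2(t)\xi_2$. It is essential that the normalizations $\|\xi_j\|=\|\eta_j\|=\|\tau\|^{1/2}$ enter here, since without them one can produce inequivalent cyclic realizations of $\tau$ with genuinely different $\omega_j$ (for instance in a multiplicity representation of $M_2$). To harness these norm conditions I would pass to the bidual and compare against the polar decomposition $\widehat\tau(w)=\omega_\tau(f^*w)$ of $\widehat\tau$ from Subsection~\ref{SS:C*alg}, where $f\in\fT^{**}$ is a partial isometry, $\omega_\tau$ is positive and weak-$*$ continuous, and $\|\tau\|=\omega_\tau(I)=\omega_\tau(f^*f)=\widehat\tau(f)$.

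First I would note that, for each $j$, the functional $w\mapsto\langle\widehat{\pi_j}(w)\xi_j,\eta_j\rangle$ is weak-$*$ continuous and agrees with $\tau$ on the weak-$*$ dense subset $\fT\subset\fT^{**}$, so $\widehat\tau(w)=\langle\widehat{\pi_j}(w)\xi_j,\eta_j\rangle$ for all $w\in\fT^{**}$. Evaluating at $w=f$ gives $\|\tau\|=\widehat\tau(f)=\langle\widehat{\pi_j}(f)\xi_j,\eta_j\rangle$. Since $\|\widehat{\pi_j}(f)\xi_j\|\le\|\xi_j\|=\|\tau\|^{1/2}=\|\eta_j\|$, the Cauchy--Schwarz inequality is saturated, which forces both $\|\widehat{\pi_j}(f)\xi_j\|=\|\tau\|^{1/2}$ and $\widehat{\pi_j}(f)\xi_j=\eta_j$. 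Setting $p:=f^*f$, the norm identity reads $\omega_j(p)=\|\tau\|=\omega_j(I)$, so positivity of $\omega_j$ and $p\le I$ yield $\widehat{\pi_j}(p)\xi_j=\xi_j$.

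The crux is to upgrade these two facts to the identity $\omega_j=\omega_\tau$. Using $\eta_j=\widehat{\pi_j}(f)\xi_j$ I obtain
\[
\widehat\tau(w)=\langle\widehat{\pi_j}(w)\xi_j,\widehat{\pi_j}(f)\xi_j\rangle=\langle\widehat{\pi_j}(f^*w)\xi_j,\xi_j\rangle=\omega_j(f^*w),\qquad w\in\fT^{**}.
\]
Comparing this with the polar decomposition $\widehat\tau(w)=\omega_\tau(f^*w)$ and replacing $w$ by $fw$ gives $\omega_j(pw)=\omega_\tau(pw)$ for every $w\in\fT^{**}$. On the other hand, $\widehat{\pi_j}(p)\xi_j=\xi_j$ together with $p=p^*=p^2$ gives $\omega_j(w)=\omega_j(pwp)$, while $p$ being the support projection of $\omega_\tau$ gives $\omega_\tau(w)=\omega_\tau(pwp)$. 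Feeding $pwp$ (which lies in $p\fT^{**}$) into the agreement above yields $\omega_j(w)=\omega_j(pwp)=\omega_\tau(pwp)=\omega_\tau(w)$. Hence $\omega_1=\omega_2=\omega_\tau$ on $\fT^{**}$, and in particular $\langle\pi_1(t)\xi_1,\xi_1\rangle=|\tau|(t)=\langle\pi_2(t)\xi_2,\xi_2\rangle$ for all $t\in\fT$. This step identifying the positive parts with the absolute value is where I expect the main difficulty to lie; the remainder is routine.

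With the positive functionals matched, I would close as in the standard argument. Because $\|\pi_2(t)\xi_2\|^2=\omega_2(t^*t)=\omega_1(t^*t)=\|\pi_1(t)\xi_1\|^2$, the assignment $U\pi_1(t)\xi_1:=\pi_2(t)\xi_2$ is a well-defined isometry on the dense subspace $\pi_1(\fT)\xi_1$; cyclicity of $\xi_2$ makes its range dense, so it extends to a unitary $U:\fH_1\to\fH_2$. A computation on $\pi_1(\fT)\xi_1$ shows $U\pi_1(t)=\pi_2(t)U$, so $U\pi_1(t)U^*=\pi_2(t)$; consequently $w\mapsto U^*\widehat{\pi_2}(w)U$ is a weak-$*$ continuous $*$-representation of $\fT^{**}$ extending $\pi_1$, forcing $U\widehat{\pi_1}(w)U^*=\widehat{\pi_2}(w)$ for all $w\in\fT^{**}$. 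Finally $U\xi_1=\xi_2$ (immediate when $\fT$ is unital, and obtained via an approximate identity otherwise), whence $U\eta_1=U\widehat{\pi_1}(f)\xi_1=\widehat{\pi_2}(f)U\xi_1=\widehat{\pi_2}(f)\xi_2=\eta_2$, completing the proof.
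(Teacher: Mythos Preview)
Your proof is correct and follows essentially the same route as the paper: pass to the bidual, use a norming element $f\in\fT^{**}$ together with the norm constraints $\|\xi_j\|=\|\eta_j\|=\|\tau\|^{1/2}$ and Cauchy--Schwarz to force $\widehat{\pi_j}(f)\xi_j=\eta_j$, deduce that the positive vector states $\omega_j=\langle\pi_j(\cdot)\xi_j,\xi_j\rangle$ coincide, and then invoke GNS uniqueness. The paper's version is slightly more direct at the key step: from $\widehat{\pi_j}(f)^*\eta_j=\xi_j$ (which also falls out of the saturated Cauchy--Schwarz) one gets in one line $\omega_j(t)=\langle\widehat{\pi_j}(ft)\xi_j,\eta_j\rangle=\widehat\tau(ft)$, visibly independent of $j$, so your detour through the support projection $p=f^*f$ and the comparison with $\omega_\tau$ is unnecessary (though not wrong). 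Conversely, you are more explicit than the paper about why $U\eta_1=\eta_2$, spelling out that the intertwining $U\pi_1(t)U^*=\pi_2(t)$ extends by weak-$*$ continuity to $\fT^{**}$ and then applying it to $f$.
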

\begin{proof}
First note that by weak-$*$ compactness of the unital ball of $\fT^{**}$, there is a contraction $f_j\in \fT^{**}$ such that $\widehat\tau(f_j)=\|\tau\|$. By assumption, we find
\[
\langle\widehat{\pi_1}(f_1)\xi_1,\eta_1 \rangle=\|\tau\|=\langle\widehat{\pi_2}(f_2)\xi_2,\eta_2 \rangle.
\]
By the Cauchy--Schwarz inequality we then find 
\[
\widehat{\pi_j}(f_j)^*\eta_j=\xi_j \qand \widehat{\pi_j}(f_j)\xi_j=\eta_j.
\]
Therefore, for $t\in \fT$ we obtain
\begin{align*}
\langle \pi_1(t)\xi_1,\xi_1\rangle &=\langle \widehat{\pi_1}(f_1 t)\xi_1,\eta_1\rangle=\langle \widehat{\pi_2}(f_2 t)\xi_2,\eta_2\rangle\\
&=\langle \pi_2(t)\xi_2,\xi_2\rangle.
\end{align*}
The conclusion now follows immediately from \cite[Proposition 2.4.1]{dixmier1977}.
\end{proof}

\subsection{Structure of the non-commutative disc algebra}\label{SS:ncda}
We briefly recall the main features of the structure of the non-commutative disc algebra. For more details, the reader can consult \cite{popescu1991}, \cite{davidson2005} or \cite{kennedy2013} and the references therein.

Let $d$ be a positive integer and let $\bF^+_d$ denote the free monoid with generators $\{1,2,\ldots,d\}$.
Given a $d$-tuple $Z=(Z_1,\ldots,Z_d)$ of bounded linear operators on a common Hilbert space and 
$ w=j_1\cdots j_k\in \bF^+_d, $
we set
\[ Z_w=Z_{j_1}\cdots Z_{j_k}. \]
We also put $Z_\emptyset=I$.

The Hilbert space $\ell^2(\bF^+_d)$ is called the \textit{Fock space} and is denoted by $\fF^2_d$. For a (possibly empty) word $w\in \bF^+_d$, we let $e_w\in \fF^2_d$ denote the characteristic function of $\{w\}$. Then, $\{e_w\}_{w\in\bF^+_d}$ is an orthonormal basis for $\fF^2_d$. The left regular representation of $\bF^+_d$ gives rise to a $d$-tuple  $L=(L_1,\ldots,L_d)$ of operators on $\fF^2_d$ such that
\[ L_j e_w=e_{j w}, \quad w\in\bF^+_d, \, j=1,\ldots,d. \]
It is readily verified that $L_1,\ldots,L_d$ are isometries with pairwise orthogonal ranges.

The unital norm closed  algebra $\A_d\subset B(\fF^2_d)$ generated by $L_1,\ldots,L_d$ is Popescu's  \textit{noncommutative disc algebra}. 
The norm closed operator system that it generates inside of $B(\fF^2_d)$ is denoted by $\S_d$. Furthermore, we let $\fT_d=\Cstar(\A_d)\subset B(\fF^2_d)$; this is the Cuntz--Toeplitz algebra. It is a consequence of \cite[Theorem 1.3]{popescu2006} that $\fT_d$ contains the ideal $\K$ of compact operators on $\fF^2_d$. The corresponding quotient map implements a $*$-isomorphism between $\fT_d/\K$ and the Cuntz algebra $\O_d$. Thus, up to this identification, we obtain a quotient map $q:\fT_d\to \O_d$. Notably, $\S_d$ can be identified with its image under this map.

\begin{lemma}\label{L:OSnorm}
The quotient map $q:\fT_d\to \O_d$ restricts to a unital completely isometric linear isomorphism between $\S_d$ and $q(\S_d)$. 
\end{lemma}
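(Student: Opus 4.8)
The plan is to leverage that $q$ is a unital $*$-homomorphism, hence unital and completely contractive; consequently $\|q^{(n)}(X)\|\le\|X\|$ holds automatically for every $X\in M_n(\S_d)$ and every $n$, so the whole content of the lemma is the reverse inequality. To secure it, I would construct a unital completely positive map $\Psi\colon\O_d\to B(\fF^2_d)$ that splits $q$ over the operator system, in the sense that $\Psi(q(s))=s$ for all $s\in\S_d$. Since such a $\Psi$ is automatically completely contractive, it would give
\[
\|X\|=\|\Psi^{(n)}(q^{(n)}(X))\|\le\|q^{(n)}(X)\|\le\|X\|,\qquad X\in M_n(\S_d),
\]
so that $q$ is a unital complete isometry on $\S_d$; injectivity, and hence the claimed isomorphism onto $q(\S_d)$, is then immediate. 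It is worth flagging at the outset that the tempting route of realizing $\Psi$ as the compression to $\fF^2_d$ of a Cuntz dilation of $L$ cannot succeed: because each $L_j$ is already an isometry, any coinvariant subspace of a family of Cuntz isometries whose compressions return the $L_j$ is forced to be reducing, which is incompatible with the Cuntz relation.

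The construction of $\Psi$ rests instead on locating a wandering vector inside a Cuntz representation, extracted from a gauge-invariant state. Write $S_j=q(L_j)$, so that $S_1,\dots,S_d$ are Cuntz isometries generating $\O_d$. The gauge action $\gamma\colon\bT\to\operatorname{Aut}(\O_d)$ with $\gamma_\lambda(S_j)=\lambda S_j$ is induced from $\operatorname{Ad}(U_\lambda)$ on $\fT_d$, where $U_\lambda e_w=\lambda^{|w|}e_w$, and is point-norm continuous; averaging any state of $\O_d$ against normalized Haar measure therefore yields a gauge-invariant state $\phi$. Gauge-invariance forces $\phi(S_\gamma)=\lambda^{|\gamma|}\phi(S_\gamma)$ for all $\lambda\in\bT$, whence $\phi(S_\gamma)=0$ for every nonempty word $\gamma$. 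Let $(\pi_\phi,\fK,\xi)$ be the corresponding GNS data with unit cyclic vector $\xi$ and set $W_j=\pi_\phi(S_j)$. Using the relations $S_i^*S_j=\delta_{ij}I$, the inner product $\inner{W_\alpha\xi}{W_\beta\xi}=\phi(S_\beta^*S_\alpha)$ evaluates to $1$ when $\alpha=\beta$ and to $0$ otherwise, the off-diagonal cases collapsing to $\phi(S_\gamma)=0$ for a nonempty $\gamma$. Hence $\{W_\alpha\xi\}_{\alpha\in\bF^+_d}$ is orthonormal, so $V\colon\fF^2_d\to\fK$ given by $Ve_\alpha=W_\alpha\xi$ is an isometry satisfying $W_jV=VL_j$, and thus $V^*W_\alpha V=L_\alpha$ for every word $\alpha$. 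Putting $\Psi(x)=V^*\pi_\phi(x)V$ then produces a unital completely positive map with $\Psi(q(L_\alpha))=L_\alpha$; since $\Psi$ and $q$ both preserve adjoints, also $\Psi(q(L_\alpha^*))=L_\alpha^*$, and by linearity and norm-continuity $\Psi\circ q$ restricts to the identity on $\S_d$.

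I expect the main obstacle to be precisely this reverse inequality, equivalently the existence of the splitting $\Psi$, together with the recognition that it cannot be manufactured by a compression-of-dilation argument because the generators are isometries. The crux that unlocks everything is the gauge-invariant state: its cyclic GNS vector is wandering for the Cuntz isometries and so embeds a copy of the left regular representation inside a Cuntz representation. The remaining ingredients---point-norm continuity of the gauge action, the averaging producing a state, and the orthonormality computation---are routine, and I would not dwell on them.
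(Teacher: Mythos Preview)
Your proof is correct and takes a genuinely different route from the paper's. The paper proceeds in two citation steps: it invokes \cite[Theorem 3.1]{popescu1996} to the effect that $q$ is already completely isometric on the algebra $\A_d$, and then appeals to the general principle \cite[Proposition 3.5]{paulsen2002} that a unital complete isometry between unital operator algebras automatically extends to one between the operator systems $\A_d+\A_d^*$ and $q(\A_d)+q(\A_d)^*$ they generate. Your argument bypasses both black boxes by building a unital completely positive right inverse $\Psi$ explicitly, locating a wandering vector inside a Cuntz representation via the GNS construction applied to a gauge-invariant state. This is more self-contained and surfaces a concrete structural fact---an invariant copy of the Fock representation sitting inside a Cuntz representation---that the paper's proof does not make visible at this point, though exactly this phenomenon becomes central later (wandering vectors, Riesz representations). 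The trade-off is length: the paper's route is two lines of citations, yours is a full construction. Your aside explaining why a coinvariant Cuntz dilation of $L$ cannot furnish $\Psi$ is correct (an isometric compression from a coinvariant subspace forces the subspace to be invariant, hence reducing, contradicting the Cuntz relation on $\fF^2_d$) but is not needed for the argument to go through.
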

\begin{proof}
It follows from \cite[Theorem 3.1]{popescu1996} that $q$ restricts to a unital completely isometric isomorphism between $\A_d$ and $q(\A_d)$. This then induces the desired map between operator systems by virtue of \cite[Proposition 3.5]{paulsen2002}.
\end{proof}

In light of this lemma, we may view $\S_d$ as embedded inside of $\O_d$. Such an identification will be used throughout the paper, and is of particular importance in Section \ref{S:FMRCNS}.

We let $\L_d\subset B(\fF^2_d)$ denote the closure of $\A_d$ in the weak operator topology. This is the prototypical example of a free semigroup algebra. Such algebras have been studied extensively since the appearance of \cite{davidson1997}, and much is known about their structure. By \cite[Theorem 2.6]{davidson2001}, there is a projection $\fq\in \A_d^{**}$ with the following properties.
\begin{enumerate}[{\rm (a)}]
\item We have $\fq a \fq=\fq a$ for every $a\in \A_d$.

\item There is a unital, completely isometric and weak-$*$ homeomorphic algebra isomorphism $\Phi:\fA_d^{**}(I-\fq)\to \L_d$ such that 
\[
\Phi(a(I-\fq))=a, \quad a\in \A_d.
\]
\end{enumerate}
Henceforth, we refer to $\fq$ as the \textit{universal structure projection}. 

We let $\V_d\subset B(\fF^2_d)$ denote the closure of $\S_d$ in the weak-$*$ topology. 
When $d\geq 2$, it is known that  $\V_d$  is very rigid.

\begin{theorem}\label{T:rigidKennedy}
Let $d\geq 2$ and let $\pi:\fT_d\to B(\fH)$ be a unital $*$-representation. Let $\L_\pi\subset B(\fH)$ denote the closure of $\pi(\A_d)$ in the weak operator topology, and let $\V_\pi\subset B(\fH)$ denote the weak-$*$ closed operator system generated by $\pi(\A_d)$. Assume that $\L_\pi$ is algebraically isomorphic to a subalgebra of $\L_d$. Then, there is a unital, completely isometric, weak-$*$ homeomorphic isomorphism $\Psi:\V_d\to \V_\pi$ extending $\pi$.
\end{theorem}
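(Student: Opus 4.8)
The plan is to realize $\Psi$ as the canonical weak-$*$ continuous extension of $\pi$ obtained by transporting the universal structure projection, and then to isolate the one deep ingredient — the rigidity of \cite{kennedy2013} — that the argument cannot avoid. Write $\wh{\pi}\colon\fT_d^{**}\to B(\fH)$ for the normal extension and put $P=\wh{\pi}(\fq)$. Because $\wh{\pi}$ is weak-$*$ continuous and $\A_d$ is weak-$*$ dense in $\A_d^{**}$, the set $\wh{\pi}(\A_d^{**})$ is weak-$*$ closed (its unit ball is the continuous image of a weak-$*$ compact set, so the Krein--Smulian theorem applies) and coincides with $\L_\pi$. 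Thus $P$ is a projection in $\L_\pi$, and in fact it is the structure projection of the free semigroup algebra $\L_\pi$; the identity $\fq a\fq=\fq a$ shows that relative to $P$ each $\pi(a)$ is block lower triangular, with a von Neumann block on $P\fH$ and an analytic block on $(I-P)\fH$ carrying a copy of $\L_d$.

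First I would reduce to the case $P=0$. Via the Fourier expansion $A\mapsto\sum_{w}\langle Ae_\emptyset,e_w\rangle x_w$, which turns the product in $\L_d$ into the Cauchy product of noncommutative formal power series, $\L_d$ embeds as a subalgebra of an integral domain; hence $\L_d$, and therefore $\L_\pi$ through the assumed embedding, has no zero divisors and in particular no idempotent other than $0$ and $I$. As $P$ is idempotent, $P=0$ or $P=I$. The value $P=I$ is impossible: by the structure theorem of \cite{davidson2001} it would render $\L_\pi$ a von Neumann algebra, which being a domain would reduce to $\bC I$, contradicting that $\L_\pi$ contains the two isometries $\pi(L_1)$ and $\pi(L_2)$, whose ranges are orthogonal and therefore cannot both be scalars. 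Therefore $P=0$; equivalently, $\pi(L)$ is an absolutely continuous row isometry with no singular summand.

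With $P=0$ I would set $\theta=\wh{\pi}\circ\Phi^{-1}\colon\L_d\to B(\fH)$. Property (b) of $\fq$ together with $\wh{\pi}(\fq)=0$ shows that $\theta$ is a unital, weak-$*$ continuous, completely contractive homomorphism onto $\L_\pi$ satisfying $\theta(a)=\wh{\pi}(a(I-\fq))=\pi(a)$ for all $a\in\A_d$. The crux is that $\theta$ is in addition a weak-$*$ homeomorphic \emph{complete isometry}. This is exactly the rigidity of absolutely continuous representations established in \cite{davidson2005} and \cite{kennedy2013}: the canonical map from $\L_d$ onto the free semigroup algebra of an absolutely continuous row isometry is a weak-$*$ homeomorphism and a complete isometry. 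I expect this to be the principal obstacle and the only genuinely nonelementary step, since $P=0$ by itself secures merely that $\theta$ is a weak-$*$ continuous complete contraction onto $\L_\pi$, while injectivity and the isometric lower bound require the full classification.

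Finally I would transfer the conclusion from the algebra to the operator system. Exactly as in the proof of Lemma \ref{L:OSnorm}, the unital complete isometry $\theta$ induces through \cite[Proposition 3.5]{paulsen2002} a unital complete order isomorphism $x+y^*\mapsto\theta(x)+\theta(y)^*$ between the operator systems generated by $\L_d$ and by $\L_\pi$; this is well defined because $\L_d\cap\L_d^*=\bC I$ and $\theta$ is unital. Since $\theta$ is a weak-$*$ homeomorphism, this map extends by weak-$*$ continuity to a unital, completely isometric, weak-$*$ homeomorphic isomorphism $\Psi\colon\V_d\to\V_\pi$. Because $\theta$, and hence $\Psi$, agrees with $\pi$ on $\A_d$, the map $\Psi$ extends $\pi$, which completes the proof.
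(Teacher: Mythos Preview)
Your strategy mirrors the paper's two-citation proof: the paper invokes \cite[Corollary~2.8]{davidson2001} for the completely isometric, weak-$*$ homeomorphic isomorphism $\theta:\L_d\to\L_\pi$ at the algebra level, and then \cite[Theorem~3.6]{kennedy2011} to pass to the operator systems. Your Steps~1--5 are correct and effectively unpack the first of these citations; the argument that $P\in\{0,I\}$ via the absence of nontrivial idempotents in $\L_d$, and the elimination of $P=I$, are pleasant pieces of extra detail that the paper suppresses into the citation.

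The gap is in your final paragraph. You claim that since $\theta$ is a weak-$*$ homeomorphism, the induced map $x+y^*\mapsto\theta(x)+\theta(y)^*$ on $\L_d+\L_d^*$ ``extends by weak-$*$ continuity'' to $\V_d$. But weak-$*$ continuity of $\theta$ on $\L_d$ does not, by itself, yield weak-$*$ continuity of this operator-system map: the decomposition $v=x+y^*\mapsto(x,y)$ is not obviously weak-$*$ continuous, nor even norm-bounded, as a map $\L_d+\L_d^*\to\L_d\times\L_d$. A bounded net $(x_\alpha+y_\alpha^*)$ converging weak-$*$ may well have unbounded individual components, so one cannot simply pass the limit through $\theta$ termwise. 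Paulsen's \cite[Proposition~3.5]{paulsen2002} secures the complete order isomorphism but is silent about weak-$*$ topologies. Bridging this is precisely what the paper outsources to \cite[Theorem~3.6]{kennedy2011}, and it is not a triviality --- note that the paper's own $d=1$ analogue (Theorem~\ref{T:rigidity}) likewise does not proceed by your shortcut but instead builds $\Psi$ explicitly through Lemma~\ref{L:Toeplitz} and Lemma~\ref{L:genunitary}.
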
 
\begin{proof}
This follows from \cite[Corollary 2.8]{davidson2001} and \cite[Theorem 3.6]{kennedy2011}.
\end{proof}

The following consequence will be very useful.

\begin{lemma}\label{L:extVd}
Let $d\geq 2$ and let $\tau:\S_d\to \bC$ be a bounded linear functional. Assume that
\[
\tau(a+b^*)=\widehat\tau(a(I-\fq)+(I-\fq)b^*), \quad a,b\in \A_d.
\]
Then, $\tau$ can be extended weak-$*$ continuously to $\V_d$.
\end{lemma}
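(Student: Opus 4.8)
The plan is to realize $\V_d$ as a weak-$*$ continuous quotient of $\S_d^{**}$ and to check that $\widehat\tau$ annihilates the kernel. Let $\widehat{\id}\colon\fT_d^{**}\to B(\fF^2_d)$ be the weak-$*$ continuous $*$-homomorphism extending the identity representation, and identify $\S_d^{**}$ with the weak-$*$ closure of $\S_d$ inside $\fT_d^{**}$. Restricting $\widehat{\id}$ gives the canonical weak-$*$ continuous surjection $\Lambda\colon\S_d^{**}\to\V_d$ extending the inclusion $\S_d\hookrightarrow\V_d$, with weak-$*$ closed kernel $N=\S_d^{**}\cap\ker\widehat{\id}$. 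By the duality between weak-$*$ closed subspaces and quotients of preduals, $\Lambda$ induces a weak-$*$ homeomorphism $\S_d^{**}/N\cong\V_d$ under which the weak-$*$ continuous functionals on $\V_d$ correspond exactly to $N_\perp\subseteq\S_d^*$. Since $\widehat\tau$ is the weak-$*$ continuous functional on $\S_d^{**}$ determined by $\tau\in\S_d^*$, the functional $\tau$ extends weak-$*$ continuously to $\V_d$ \emph{if and only if} $\widehat\tau|_N=0$. This is what I would aim to prove.

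The hypothesis feeds in at once. Because $\widehat\tau$ extends $\tau$ we have $\tau(a+b^*)=\widehat\tau(a+b^*)$, so subtracting the assumed identity yields
\[
\widehat\tau(a\fq+\fq b^*)=0,\qquad a,b\in\A_d.
\]
The maps $w\mapsto\widehat\tau(w\fq)$ and $w\mapsto\widehat\tau(\fq w^*)$ are weak-$*$ continuous on $\A_d^{**}$ (right multiplication by $\fq$, the adjoint, and $\widehat\tau$ are all weak-$*$ continuous), and they vanish on the weak-$*$ dense subalgebra $\A_d$; hence
\[
\widehat\tau(u\fq)=0\qand\widehat\tau(\fq v^*)=0,\qquad u,v\in\A_d^{**}.
\]
In particular $\widehat\tau$ annihilates $\A_d^{**}\fq+\fq(\A_d^{**})^*$.

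It then remains to show $N\subseteq\A_d^{**}\fq+\fq(\A_d^{**})^*$, for which I would use two structural inputs. First, comparing the weak-$*$ continuous maps $w\mapsto\widehat{\id}(w)$ and $w\mapsto\Phi\big(w(I-\fq)\big)$ on $\A_d^{**}$, which agree on the dense set $\A_d$ by property (b) of $\fq$, shows $\widehat{\id}(w)=\Phi\big(w(I-\fq)\big)$; since $\Phi$ is injective this gives $\widehat{\id}(\fq)=0$, $\widehat{\id}(\A_d^{**})=\L_d$, and $\ker\big(\widehat{\id}|_{\A_d^{**}}\big)=\A_d^{**}\fq$. Second, I would invoke the standard fact that $\L_d\cap\L_d^*=\bC I$. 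Granting that every element of $\S_d^{**}$ can be written as $x+y^*$ with $x,y\in\A_d^{**}$, take $n=x+y^*\in N$: then $\widehat{\id}(x)=-\widehat{\id}(y)^*\in\L_d\cap\L_d^*=\bC I$, say $\widehat{\id}(x)=\lambda I$, so that $x-\lambda I$ and $y+\bar\lambda I$ lie in $\A_d^{**}\fq$, whence $n=x+y^*=u\fq+\fq v^*$ for suitable $u,v\in\A_d^{**}$. Together with the previous paragraph this yields $\widehat\tau|_N=0$, completing the argument.

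The step I expect to be the main obstacle is the representation $\S_d^{**}=\A_d^{**}+(\A_d^{**})^*$ used above, equivalently the assertion that the sum of the weak-$*$ closed subspaces $\A_d^{**}$ and $(\A_d^{**})^*$ is again weak-$*$ closed (equivalently $\V_d=\L_d+\L_d^*$). Sums of weak-$*$ closed subspaces are not closed in general, so this requires genuine input, and it is precisely here that the hypothesis $d\geq 2$ enters through the rigidity recorded in Theorem \ref{T:rigidKennedy} and the structure theory of free semigroup algebras. If one preferred to sidestep proving this closedness outright, the alternative is to show directly that $N$ is contained in the weak-$*$ closure of $\A_d^{**}\fq+\fq(\A_d^{**})^*$, on which $\widehat\tau$ already vanishes by weak-$*$ continuity; but locating $N$ precisely still rests on the same structural facts about $\fq$ and $\L_d$.
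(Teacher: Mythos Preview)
Your strategy---show that $\widehat\tau$ annihilates the kernel $N$ of the canonical surjection $\Lambda\colon\S_d^{**}\to\V_d$---is genuinely different from the paper's, and the parts you actually carry out are correct: the hypothesis does give $\widehat\tau(u\fq)=\widehat\tau(\fq v^*)=0$ for all $u,v\in\A_d^{**}$, the identification $\ker(\widehat{\id}|_{\A_d^{**}})=\A_d^{**}\fq$ is right, and $\L_d\cap\L_d^*=\bC I$ is indeed a known fact. The gap you flag, however, is a real one and not easily dispatched. The decomposition $\S_d^{**}=\A_d^{**}+(\A_d^{**})^*$ reduces (after splitting off the von Neumann corner $\A_d^{**}\fq=\fq(\A_d^{**})^*$) to the assertion that $\A_d^{**}(I-\fq)+(I-\fq)(\A_d^{**})^*$ is weak-$*$ closed, which under $\Phi$ is the statement $\V_d=\L_d+\L_d^*$. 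For $d=1$ the analogue $L^\infty(\bT)=H^\infty+\ol{H^\infty}$ is \emph{false}, so you are right that $d\geq 2$ must enter here; but simply pointing to Theorem~\ref{T:rigidKennedy} does not by itself prove this closedness, and neither does the alternative you sketch of locating $N$ inside the weak-$*$ closure of $\A_d^{**}\fq+\fq(\A_d^{**})^*$, since describing $N$ runs into the same issue.

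The paper sidesteps the problem entirely. Instead of analysing $N$, it constructs a weak-$*$ continuous \emph{section} of $\Lambda$: setting $\R$ equal to the weak-$*$ closure of $\{a(I-\fq)+(I-\fq)b^*:a,b\in\A_d\}$ inside $\S_d^{**}$, Theorem~\ref{T:rigidKennedy} yields a weak-$*$ homeomorphic isomorphism $\Psi\colon\R\to\V_d$ with $\Psi(a(I-\fq)+(I-\fq)b^*)=a+b^*$. The hypothesis then reads $\tau=\widehat\tau\circ\Psi^{-1}$ on $\S_d$, and $\widehat\tau\circ\Psi^{-1}$ is the desired extension. A posteriori this gives $\S_d^{**}=\R\oplus N$ (with weak-$*$ continuous projection $\Psi^{-1}\circ\Lambda$), from which one could go back and verify $N$ equals the weak-$*$ closure of $\A_d^{**}\fq+\fq(\A_d^{**})^*$ and hence complete your argument; but at that point one is essentially using the paper's proof. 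The upshot: your route is conceptually natural and would yield the sharper two-sided statement $N_\perp=\{\phi:\phi\text{ satisfies the hypothesis}\}$, but making it self-contained requires structural input at least as deep as what the paper invokes, and the paper's direct construction is the cleaner way to package that input.
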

\begin{proof}
By property (b) of the universal structure projection, there is a unital, completely isometric and weak-$*$ homeomorphic isomorphism $\Phi:\fA_d^{**}(I-\fq)\to \L_d$ such that 
\[
\Phi(a(I-\fq))=a, \quad a\in \A_d.
\]
Let $\R\subset\S_d^{**}$ be the weak-$*$ closure of
	\[ \{a(I-\fq)+(I-\fq)b^*:a,b\in\A_d\}. \]
By Theorem \ref{T:rigidKennedy}, there is a unital, completely isometric and weak$^*$-homeomorphic linear isomorphism $\Psi:\R\to\V_d$ such that
	\[ \Psi( a(I-\fq)+(I-\fq)b^* )=a+b^*, \quad a,b\in\A_d. \]
	Thus, we find
	\[ \tau(a+b^*)=\widehat\tau(\Psi^{-1}(a+b^*)) \quad a,b\in\A_d, \]
	whence the desired weak-$*$ continuous extension of $\tau$ to $\V_d$ is $\widehat\tau\circ \Psi^{-1}$.
\end{proof}


\subsection{Classification of $*$-representations of the Cuntz--Toeplitz algebra}\label{SS:classrep}

The representation theory of the Cuntz--Toeplitz algebra can be entirely understood in terms of \emph{row isometries}, that is, $d$-tuples $(V_1,\ldots,V_d)$ of isometries with pairwise orthogonal ranges. Indeed, let $\pi:\fT_d\to B(\fH)$ be a unital $*$-representation. Then, it is easily verified that $(\pi(L_1),\ldots,\pi(L_d))$ is a row isometry on $\fH$. Conversely, given a row isometry $V=(V_1,\ldots,V_d)$ on some Hilbert space $\fH$,  it follows from \cite[Theorem 1.3]{popescu1989} that there is a unital $*$-representation $\pi:\fT_d\to B(\fH)$ such that $\pi(L_j)=V_j$ for every $1\leq j\leq d$.  For our purposes in this paper, it is more convenient to use the language of $*$-representations of the $\rC^*$-algebras $\fT_d$ and $\O_d$. However, as explained above, this point of view is completely equivalent to one where the focus is placed on row isometries instead.

One consequence of the previous paragraph is as follows. Let $\pi:\fT_d\to B(\fH)$ be a unital $*$-representation and let $\fM\subset \fH$ be a closed subspace which is invariant for $\pi(\A_d)$. Then, the $d$-tuple $(\pi(L_1)|_\fM,\ldots,\pi(L_d)|_\fM)$ is a row isometry, so there is a unital $*$-representation $\rho:\fT_d\to B(\fM)$ such that $\rho(L_j)=\pi(L_j)|_\fM$ for every $1\leq j\leq d$. We will refer to $\rho$ as the representation \emph{induced by $\pi$ on $\fM$}.

If $\pi:\fT_d\to B(\fH)$ is a $*$-representation, then the argument given in the proof of \cite[Theorem 3.1]{popescu1996}  shows that the restriction of $\pi$ to $\A_d$ is completely isometric when $d\geq 2$. This very strong rigidity property of $\A_d$ implies in particular that different $*$-representations of $\fT_d$ cannot be distinguished by their behaviour on $\A_d$, so finer properties must be analyzed for this purpose.

We say that the representation $\pi$ is \emph{absolutely continuous} if there is a weak-$*$ continuous homomorphism $\pi':\fL_d\to B(\fH)$ such that
\[
\pi'(a)=\pi(a), \quad a\in \A_d.
\]
%
%
%
%
We say that the representation $\pi$ is \emph{singular} if there is no non-zero closed subspace of $\fH$ which is invariant for $\pi(\A_d)$ and such that the corresponding representation induced by $\pi$ is absolutely continuous. 
We will require the following known fact, relating absolute continuity and singularity to the universal structure projection $\fq$.

\begin{lemma}\label{L:UnivStructProj}
	Let $\pi:\fT_d\to B(\fH)$ be a unital $*$-representation. Let $\fM\subset\fH$ be a closed invariant subspace for $\pi(\A_d)$ and let $\rho:\fT_d\to B(\fM)$ be the corresponding representation induced by $\pi$. Let $P\in B(\fH)$ denote the orthogonal projection onto $\fM$. Then, the following statements hold.
\begin{enumerate}[{\rm (i)}]
\item The representation $\rho$ is absolutely continuous if and only if $\widehat{\pi}(\fq)P=0$.

\item The representation $\rho$ is singular if and only if $\widehat{\pi}(\fq)P=P$.

\end{enumerate}
\end{lemma}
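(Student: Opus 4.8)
The plan is to reduce both equivalences to intrinsic statements about the induced representation $\rho$ together with its own image of the structure projection, and then to transport these back to conditions on $\widehat{\pi}(\fq)$ by a compression argument. The bridge is the identity
\[
\widehat{\rho}(w)=P\,\widehat{\pi}(w)\,P\big|_{\fM}, \qquad w\in\A_d^{**}.
\]
I would obtain it as follows: since $\fM$ is invariant for $\pi(\A_d)$, the operator $\rho(a)$ equals the compression $P\pi(a)\big|_{\fM}$ for every $a\in\A_d$, so the weak-$*$ continuous linear maps $\widehat{\rho}$ and $w\mapsto P\,\widehat{\pi}(w)\,P\big|_{\fM}$ on $\A_d^{**}$ agree on the weak-$*$ dense subspace $\A_d$ and hence coincide. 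In particular $\widehat{\rho}(\fq)=P\,\widehat{\pi}(\fq)\,P\big|_{\fM}$, and this is an orthogonal projection on $\fM$ because $\widehat{\rho}$ is a weak-$*$ continuous $*$-representation of the von Neumann algebra $\fT_d^{**}$ and $\fq$ is a (self-adjoint) projection. Writing $Q=\widehat{\pi}(\fq)$, elementary projection algebra gives $PQP\big|_{\fM}=0\iff QP=0$ and $PQP\big|_{\fM}=I_{\fM}\iff QP=P$; the nontrivial implications use that $PQP=(QP)^*(QP)$ and that a projection preserves the norm of a vector only when that vector lies in its range. It therefore suffices to prove that $\rho$ is absolutely continuous exactly when $\widehat{\rho}(\fq)=0$, and singular exactly when $\widehat{\rho}(\fq)=I_{\fM}$.

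For the first of these I would argue for an arbitrary unital $*$-representation $\sigma:\fT_d\to B(\fK)$ that $\sigma$ is absolutely continuous if and only if $\widehat{\sigma}(\fq)=0$, using property (b) of $\fq$. If $\widehat{\sigma}(\fq)=0$, then $\sigma'=\widehat{\sigma}\circ\Phi^{-1}:\L_d\to B(\fK)$ is a weak-$*$ continuous homomorphism, and $\sigma'(a)=\widehat{\sigma}(a(I-\fq))=\sigma(a)$ for $a\in\A_d$, so $\sigma$ is absolutely continuous. Conversely, given such an extension $\sigma'$, the weak-$*$ continuous linear maps $w\mapsto\sigma'\big(\Phi(w(I-\fq))\big)$ and $\widehat{\sigma}$ agree on $\A_d$, hence on $\A_d^{**}$; evaluating at $w=\fq$ and using $\fq(I-\fq)=0$ yields $\widehat{\sigma}(\fq)=0$. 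Specializing to $\sigma=\rho$ and combining with the compression identity and the equivalence $PQP\big|_{\fM}=0\iff QP=0$ proves (i).

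For singularity, write $Q_\sigma=\widehat{\sigma}(\fq)$. Property (a) extends by weak-$*$ density to $\fq w\fq=\fq w$ on $\A_d^{**}$ and gives $Q_\sigma\,\sigma(a)\,(I-Q_\sigma)=0$ for $a\in\A_d$, so $\ker Q_\sigma$ is invariant for $\sigma(\A_d)$; since $Q_\sigma(I-Q_\sigma)=0$, part (i) (applied with ambient representation $\sigma$) shows the representation induced on $\ker Q_\sigma$ is absolutely continuous. If $\sigma$ is singular this forces $\ker Q_\sigma=0$, i.e. $Q_\sigma=I_{\fK}$. Conversely, if $Q_\sigma=I_{\fK}$ and $\fN\subseteq\fK$ is a closed invariant subspace whose induced representation is absolutely continuous, then part (i) gives $Q_\sigma P_{\fN}=0$, whence $P_{\fN}=0$; thus $\sigma$ is singular. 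Taking $\sigma=\rho$ and using $\widehat{\rho}(\fq)=I_{\fM}\iff QP=P$ then yields (ii).

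The step I expect to be the main obstacle is not any single estimate but the careful bookkeeping in the passage between $\pi$ and $\rho$. One must check that ``the representation induced on $\fN$'' is unambiguous, whether formed inside $\fH$ from $\pi$ or inside $\fM$ from $\rho$ — it is, since $\fN\subseteq\fM$ and $\rho(L_j)=\pi(L_j)\big|_{\fM}$. More importantly, the naive candidate $\fM\cap\ker Q$ for the absolutely continuous part of $\rho$ is the \emph{wrong} subspace: it can be trivial even when $\fM\not\subseteq\ran Q$. Routing the argument through the intrinsic projection $\widehat{\rho}(\fq)$ on $\fM$, whose kernel is the genuine absolutely continuous part of $\rho$, is exactly what circumvents this, and the compression identity is the device that converts statements about $\widehat{\rho}(\fq)$ into the asserted conditions on $\widehat{\pi}(\fq)P$.
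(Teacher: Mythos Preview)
Your proof is correct and follows essentially the same strategy as the paper's: both use property (b) of $\fq$ together with the isomorphism $\Phi$ for part (i), and for part (ii) both exhibit the absolutely continuous invariant subspace $\ker\widehat{\rho}(\fq)$ (which coincides with the paper's $\overline{\widehat{\pi}(I-\fq)\fM}$) and then invoke (i). The organizational differences are cosmetic: you factor through the compression identity $\widehat{\rho}(\fq)=P\widehat{\pi}(\fq)P|_{\fM}$ and an intrinsic characterization for a general $\sigma$, and for the forward direction of (i) you use a uniqueness-of-weak-$*$-extensions argument where the paper computes directly with a Goldstine net.

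One small correction to your closing commentary: since $\fq$ lies in the weak-$*$ closure of $\A_d$ and $\fM$ is $\pi(\A_d)$-invariant, one has $\widehat{\pi}(\fq)\fM\subset\fM$, and hence $\fM\cap\ker Q$ actually \emph{equals} $\ker\widehat{\rho}(\fq)$. So the ``naive candidate'' you warn against is in fact the correct subspace, and it is nonzero precisely when $\fM\not\subset\operatorname{ran}Q$. This does not affect your argument, which never uses the mistaken claim.
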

\begin{proof}
Recall first that by property (b) in Subsection \ref{SS:ncda}, there is  unital, completely isometric and weak-$*$ homeomorphic isomorphism $\Phi:\fA_d^{**}(I-\fq)\to \L_d$ such that 
\[
\Phi(a(I-\fq))=a, \quad a\in \A_d.
\]

(i) Assume that $\rho$ is absolutely continuous. By Goldstine's theorem, there is a contractive net $(a_i)$ in $\A_d$ converging to $\fq$ in the weak-$*$ topology of $\A_d^{**}$. Note then that $(a_i)$ converges to $0$ in the weak-$*$ topology of $\L_d$ by the previous paragraph, and thus $(\rho(a_i))$ converges to $0$ in the weak-$*$ topology of $B(\fM)$.  For $\xi\in \fM$ we thus find
\begin{align*}
\|\widehat\pi(\fq)\xi\|^2&=\langle \widehat\pi(\fq)\xi,\xi\rangle=\lim_i \langle \pi(a_i)\xi,\xi\rangle\\
&=\lim_i \langle \rho(a_i)\xi,\xi\rangle=0
\end{align*}
so that $\widehat\pi(\fq)P=0$.

Conversely, assume that $\widehat{\pi}(\fq)P=0$. For $a\in \A_d$, we find
\begin{align*}
\rho(a)=\pi(a)P|_\fM=\widehat\pi(a(I-\fq))P|_\fM=(\widehat\pi\circ \Phi^{-1})(a)P|_\fM
\end{align*}
so that $\rho$ is absolutely continuous.

(ii) Assume that $\rho$ is singular. Since $\widehat\pi(\fq)$ lies in the weak-$*$ closure of $\pi(\A_d)$, we infer that $\widehat\pi(\fq)\fM\subset \fM$. By property (a) in Subsection \ref{SS:ncda}, it follows that the closed subspace $\fN= \ol{\widehat\pi(I-\fq)\fM}$ is invariant for $\pi(\A_d)$. 
We see that
\[
\rho(a)|_{\fN}=\widehat\pi (a(I-\fq))|_\fN=(\widehat\pi\circ \Phi^{-1})(a)|_\fN
\]
for every $a\in \A_d$. We infer that the representation induced by $\rho$ on $\fN$ is absolutely continuous, whence $\fN=\{0\}$ since $\rho$ is assumed to be singular. Thus, $\widehat\pi(I-\fq)P=0$ and so $P=\widehat\pi(\fq) P$.

Conversely, assume that $P=\widehat\pi(\fq) P$. Let $\fN\subset \fM$ be a closed invariant subspace for $\rho(\A_d)$ such that the corresponding representation induced by $\rho$ is absolutely continuous. Let $Q\in B(\fH)$ denote the orthogonal projection onto $\fN$. By (i), we infer that $\widehat\pi(\fq)Q=0$. Thus
\begin{align*}
Q=QP=Q\widehat{\pi}(\fq)P=0.
\end{align*}
We conclude that $\rho$ is singular.
\end{proof}

%
%
%

The concepts of absolute continuity and of singularity for $*$-representations are motivated by the corresponding notions from measure theory. But this analogy has some limitations: when $d\geq 2$, there is a third type of $*$-representations of $\fT_d$. A unital $*$-representation $\pi:\fT_d\to B(\fH)$ is said to be of \emph{dilation type} if there is no non-zero closed subspace of $\fH$ which is reducing for $\pi(\fT_d)$ and such that the corresponding representation induced by $\pi$ is absolutely continuous or singular. 

For our purposes, we also require the following terminology. The representation $\pi$ is said to be \emph{completely non-singular} if there is no non-zero closed subspace of $\fH$ which is reducing for $\pi(\fT_d)$ and such that the corresponding representation induced by $\pi$ is singular.

If $\sigma:\O_d\to B(\fH)$ is a unital $*$-representation and  $q:\fT_d\to \O_d$ denotes the quotient map, then we say that $\sigma$ is absolutely continuous, singular, completely non-singular or of dilation type whenever $\sigma\circ q$ has the corresponding property.

 Recall that a vector $\xi\in\fH$ is a \textit{wandering vector} for some unital $*$-representation $\pi:\fT_d\to B(\fH)$ if $\{\pi(L_w)\xi:w\in\bF^+_d\}$ is an orthonormal set in $\fH$. It is readily seen that if $\xi$ is wandering for $\pi$, then the representation induced by $\pi$ on the invariant subspace $\ol{\pi(\A_d)\xi}$ is unitarily equivalent to the identity representation of $\fT_d$. 
We now consider an important family of examples, which is studied in detail in \cite[Section 3]{davidson1999}.

\begin{example}\label{E:AtomicD}
Let $m\in \bN$. Fix a map 
\[
\upsilon:\{1,\ldots,m\}\to\{1,2,\ldots,d\}
\]
and  a complex number $\lambda$ with $|\lambda|=1$.  Let $\fH$ be a Hilbert space with basis
	\[ \{\xi_{n,w}:1\leq n\leq m,w\in\bF^+_d\bksl \bF^+_d\upsilon(n)\}. \]
	For $1\leq j\leq d$, we define an isometry $S_j\in B(\fH)$ by putting
		\[
		S_j\xi_{n,w}=\begin{cases}
			\lambda\xi_{1,\emptyset} & \text{if }  n=m,w=\emptyset \text{ and }j=\upsilon(m), \\
			\xi_{n+1,\emptyset} & \text{if }  n<m,w=\emptyset \text{ and }j=\upsilon(n), \\
			\xi_{n,j} &  \text{if } w=\emptyset \text{ and }j\neq \upsilon(n), \\
			\xi_{n,jw} &  \text{if } w\neq \emptyset.
		\end{cases}
	\]
	By construction, the isometries $S_1,\ldots,S_d$ have pairwise orthogonal ranges that span $\fH$. Thus, the $\rC^*$-algebra that they generate is $*$-isomorphic to $\O_d$, so there is a unital $*$-representation $\sigma_{\upsilon,\lambda}:\O_d\to B(\fH)$ with $\sigma_{\upsilon,\lambda}(q(L_j))=S_j$ for $1\leq j\leq d$. 
	
Consider now the word 
\[
u=\upsilon(m)\upsilon(m-1)\cdots \upsilon(1)\in \bF^+_d.
\]
If $\upsilon$ is chosen so that there is no word $w\in \bF^+_d$  and no integer $p>1$ such that $u=w^p$ (in other words, $u$ is primitive), then the representation $\sigma_{\upsilon,\lambda}$ is known to be irreducible \cite[Proposition 3.10]{davidson1999}. We claim then that $\sigma_{\upsilon,\lambda}$ is of dilation type; by definition, this means that we must verify that $\sigma_{\upsilon,\lambda}$ is neither absolutely continuous nor singular.

It is easily seen that $\xi_{n,w}$ is a wandering vector for $\sigma_{\upsilon,\lambda}$ whenever $w\neq \emptyset$. In particular, the representation induced by $\sigma_{\upsilon,\lambda}$ on the corresponding cyclic invariant subspace is unitarily equivalent to the identity representation of $\fT_d$ and hence absolutely continuous, so $\sigma_{\upsilon,\lambda}$ is not singular. If $\sigma_{\upsilon,\lambda}$ were absolutely continuous, then the sequence $(\sigma_{\upsilon,\lambda}(L_u^n))_n$ would converge to $0$ in the weak-$*$ topology of $B(\fH)$. But this is not the case, seeing as
	\[
	\sigma_{\upsilon,\lambda}(L_u^n)\xi_{1,\emptyset}=\lambda^n \xi_{1,\emptyset}, \quad n\in \bN.
	\]
We thus conclude that $\sigma_{\upsilon,\lambda}$ is of dilation type, as claimed. Note also that different values of $\lambda$ produce unitarily inequivalent $*$-representations; see the proof of \cite[Theorem 3.4]{davidson1999}. 
\qed
\end{example}

\section{The classical F. and M. Riesz theorem}\label{S:classFM}
We begin  by stating the classical theorem that we seek to generalize (see \cite[page 47]{hoffman1969}). 

\begin{theorem}[F. and M. Riesz]\label{T:classFM}
Let $\mu$ be a regular Borel measure on the unit circle $\bT$. Assume that
\[
\int_{\bT}ad\mu=0, \quad a\in \rA(\bD).
\]
Then, $\mu$ is absolutely continuous with respect to Lebesgue measure $\lambda$.
\end{theorem}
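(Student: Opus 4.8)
The plan is to exploit Hardy space theory through the Poisson integral of $\mu$. First I would note that the hypothesis, applied to the functions $z^n\in\rA(\bD)$ for $n\geq 0$, says precisely that $\int_\bT e^{in\theta}\,d\mu=0$ for every $n\geq 0$; equivalently, writing $\hat\mu(m)=\int_\bT e^{-im\theta}\,d\mu$, all Fourier coefficients $\hat\mu(k)$ with $k\leq 0$ vanish (the case $k=0$ coming from $1\in\rA(\bD)$). The Poisson integral of $\mu$ is $P[\mu](re^{i\theta})=\sum_{n}\hat\mu(n)\,r^{|n|}e^{in\theta}$, so the vanishing of the nonpositive coefficients collapses this sum to $f(re^{i\theta})$, where $f(z)=\sum_{n\geq 1}\hat\mu(n)z^n$ is holomorphic on $\bD$. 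In other words, the harmonic extension of an analytic measure is automatically holomorphic.

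Next I would check that $f\in H^1(\bD)$. Since the Poisson kernel is nonnegative with unit mass, for each $0<r<1$ one has $\frac{1}{2\pi}\int_0^{2\pi}|f(re^{i\theta})|\,d\theta=\frac{1}{2\pi}\int_0^{2\pi}|P[\mu](re^{i\theta})|\,d\theta\leq\|\mu\|$, so that $\sup_{0<r<1}\|f_r\|_{L^1(\bT)}\leq\|\mu\|<\infty$ and therefore $f$ lies in $H^1(\bD)$. At this point I would invoke the central result from the boundary theory of Hardy spaces: a function $f\in H^1(\bD)$ admits a boundary function $f^*\in L^1(\bT)$ for which $f_r\to f^*$ not merely almost everywhere but in $L^1(\bT)$-norm.

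Finally, I would combine this with the elementary fact that, for any finite measure, $f_r\,d\lambda=P[\mu]_r\,d\lambda\to\mu$ in the weak-$*$ topology of $\rC(\bT)^*$, since the Poisson kernels form an approximate identity. The $L^1$-convergence $f_r\to f^*$ yields $f_r\,d\lambda\to f^*\,d\lambda$ weak-$*$ as well, and uniqueness of weak-$*$ limits then forces $\mu=f^*\,d\lambda$. Hence $\mu$ is absolutely continuous with respect to $\lambda$, with density $f^*$.

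The only genuinely hard step is the $L^1$-norm convergence of the radial approximations of an $H^1$ function, which is where all the analytic content resides: it cannot be obtained from dominated convergence alone, and the standard route passes through the inner--outer factorization of $H^1$ functions, writing $f=\Theta\cdot O$ with $\Theta$ inner and $O$ outer, then splitting off an $H^2$ square root of the outer factor to establish uniform integrability of the family $\{f_r\}$. Everything else --- extracting the Fourier condition, recognizing the harmonic extension as holomorphic, the uniform $L^1$ bound, and the weak-$*$ uniqueness argument --- is soft and amounts to bookkeeping.
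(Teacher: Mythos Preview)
The paper does not supply a proof of this statement: Theorem~\ref{T:classFM} is quoted as the classical F.\ and M.\ Riesz theorem with a reference to Hoffman's book, and is used thereafter as a black box. So there is no ``paper's own proof'' to compare against.

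Your argument is correct and is in fact the standard textbook route (essentially the one in Hoffman and in Duren). One small sharpening: for the hard step you do not need the full inner--outer factorization. It suffices to factor out the zeros via a Blaschke product, writing $f=Bg$ with $g\in H^1$ zero-free, take a holomorphic square root $h$ of $g$ so that $h\in H^2$, and then use that $Bh,\,h\in H^2$ together with the elementary $L^2$-convergence of $H^2$ radial slices to get $L^1$-convergence of $f_r=(B_rh_r)h_r$. This avoids any appeal to outer functions or singular inner factors, whose construction sometimes leans on results close to F.\ and M.\ Riesz itself; the Blaschke factorization needs only Jensen's formula and is logically prior, so there is no circularity.
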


We now aim to reframe this theorem in operator algebraic terms that are more suitable for our generalizations to come. In addition, this reframing will provide motivation for our main results.

\subsection{Completely non-singular representations}
The $\rC^*$-algebra $\rC(\bT)$ acts on $L^2(\bT,\lambda)$ as multiplication operators. This yields a unital injective $*$-representation $\beta:\O_1\to B(L^2(\bT,\lambda))$. It is easy to see that $\beta$ is an absolutely continuous representation in the language of Subsection \ref{SS:classrep}. In the univariate setting, this is equivalent to $\beta$ being completely non-singular. We now record a reformulation of Theorem \ref{T:classFM}.

\begin{corollary}\label{C:classFM1}
Let $\tau:\O_1\to\bC$ be a bounded linear functional with $q(\A_1)\subset \ker \tau$. Then, there are unit vectors $\xi,\eta\in L^2(\bT,\lambda)$ with $\|\xi\|=\|\eta\|=\|\tau\|^{1/2}$ such that $\xi$ is cyclic for $\beta$ and
\[
\tau(t)=\langle \beta(t)\xi,\eta\rangle, \quad t\in \O_1.
\]
Furthermore, we have the following uniqueness property: if $\pi:\O_1\to B(\fH')$ is a unital $*$-representation and $\xi',\eta'\in \fH'$ are vectors with $\|\xi'\|=\|\eta'\|=\|\tau\|^{1/2}$ such that $\xi'$ is cyclic for $\pi$ and
\[
\tau(t)=\langle \pi'(t)\xi',\eta'\rangle, \quad t\in \O_1,
\]
then there is a unitary operator $U:L^2(\bT,\lambda)\to\fH'$ such that $U\xi=\xi', U\eta=\eta'$ and
\[
U\beta(t)U^*=\pi'(t), \quad t\in \O_1.
\]

\end{corollary}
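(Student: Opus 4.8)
The plan is to transport the statement to the concrete model $\O_1\cong\rC(\bT)$ and read off everything from the classical theorem together with elementary Hardy space facts. First I would identify $\tau$ with a regular complex Borel measure $\mu$ on $\bT$ via the Riesz representation theorem, so that $\tau(f)=\int_\bT f\,d\mu$ and $\|\tau\|=|\mu|(\bT)$. Under the identification $q(\A_1)=\rA(\bD)$, the hypothesis $q(\A_1)\subset\ker\tau$ says precisely that $\mu$ annihilates the disc algebra. Theorem \ref{T:classFM} then yields $\mu\ll\lambda$; writing $d\mu=g\,d\lambda$ with $g\in L^1(\bT,\lambda)$, we have $\|\tau\|=\int_\bT|g|\,d\lambda$.

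For existence, the task is to factor the density as $g=\xi\overline{\eta}$ with $\xi,\eta\in L^2(\bT,\lambda)$, since then $\tau(f)=\int_\bT f\xi\overline{\eta}\,d\lambda=\langle\beta(f)\xi,\eta\rangle$. The natural choice is $\eta=|g|^{1/2}$ and $\xi=g\,|g|^{-1/2}$ (set to $0$ where $g$ vanishes), which gives $\|\xi\|_2^2=\|\eta\|_2^2=\int_\bT|g|\,d\lambda=\|\tau\|$, matching the required normalization. It remains to verify that $\xi$ is cyclic for the multiplication representation $\beta$. This holds if and only if $\xi\neq0$ almost everywhere: indeed, any $h$ orthogonal to $\overline{\rC(\bT)\xi}$ satisfies $\int_\bT f\xi\overline{h}\,d\lambda=0$ for all $f\in\rC(\bT)$, forcing $\xi\overline{h}=0$ a.e., so that $h=0$ whenever $\xi\neq0$ a.e.

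The main obstacle is exactly this cyclicity, and it cannot be finessed by a cleverer factorization: the imposed equalities $\|\xi\|_2=\|\eta\|_2=\|\tau\|^{1/2}$ force equality in the Cauchy--Schwarz estimate $\int_\bT|g|=\int_\bT|\xi|\,|\eta|\leq\|\xi\|_2\,\|\eta\|_2$, whence $|\xi|^2=|\eta|^2=|g|$ a.e. Thus $\xi$ vanishes precisely where $g$ does, and cyclicity is equivalent to $g\neq0$ a.e. To secure this I would invoke a fact strictly stronger than plain absolute continuity: annihilation of $\rA(\bD)$ forces all nonpositive Fourier coefficients of $g$ to vanish, so that $g$ lies in the Hardy space $H^1$ (indeed in $H^1_0$), and a nonzero $H^1$ function satisfies $\int_\bT\log|g|\,d\lambda>-\infty$ and is therefore nonzero almost everywhere. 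Consequently, assuming $\tau\neq0$ (the only case in which a cyclic vector of the prescribed norm can exist), the vector $\xi$ is cyclic.

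Finally, uniqueness is immediate and requires no new work. The data $(\beta,\xi,\eta)$ on $L^2(\bT,\lambda)$ and any competing cyclic data $(\pi',\xi',\eta')$ on $\fH'$ both represent $\tau$ in the form demanded by Lemma \ref{L:Subrep}, with matching norms and cyclic first vectors. Applying that lemma with $\fT=\O_1$ produces the desired unitary $U:L^2(\bT,\lambda)\to\fH'$ satisfying $U\xi=\xi'$, $U\eta=\eta'$ and $U\beta(t)U^*=\pi'(t)$ for all $t\in\O_1$.
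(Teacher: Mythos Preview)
Your proof is correct and follows essentially the same route as the paper: identify $\tau$ with an $L^1$ density via the Riesz representation theorem and Theorem~\ref{T:classFM}, observe that the density lies in $H^1$, factor it as $\xi\overline{\eta}$ with the prescribed norms, deduce cyclicity from the fact that nonzero $H^1$ functions are nonvanishing a.e., and invoke Lemma~\ref{L:Subrep} for uniqueness. The only difference is cosmetic: the paper cites a factorization result from Douglas to produce $\xi,\eta\in L^2$ nonzero a.e., whereas you write down the elementary factorization $\eta=|g|^{1/2}$, $\xi=g\,|g|^{-1/2}$ directly and add the pleasant observation that the norm constraints force $|\xi|^2=|g|$ a.e.\ via equality in Cauchy--Schwarz, so no other factorization could sidestep the $H^1$ nonvanishing argument.
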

\begin{proof}
Under the natural identification $\O_1\cong\rC(\bT)$, we see that $\rA(\bD)$ is identified with $q(\A_1)$. Thus, applying Theorem \ref{T:classFM} we find a function $r\in L^1(\bT,\lambda)$ such that $\|r\|_1=\|\tau\|$ and
\[
\tau(t)=\int_\bT t(\zeta)r(\zeta)d\lambda(\zeta), \quad t\in \O_1.
\]
By assumption, we find
\[
\int_{\bT}\zeta^n r(\zeta)d\lambda(\zeta)=0, \quad n\geq 0
\]
which implies that $r\in H^1$ \cite[page 39]{hoffman1969}. By \cite[Corollary 6.27]{douglas1998}, there are functions $\xi, \eta\in L^2(\bT,\lambda)$ which are non-zero $[\lambda]$-a.e on $\bT$, such that $\|\xi\|_2=\|\eta\|_2=\|\tau\|^{1/2}$ and such that $r=\xi\ol{\eta}$. In particular, this implies that
\[
\tau(t)=\langle \beta(t)\xi,\eta\rangle, \quad t\in \O_1.
\]
Furthermore, it follows from \cite[Theorem 6.8]{douglas1998} that $\xi$ does not belong to any non-zero reducing subspace for $\beta(\O_1)$, whence $\xi$ is cyclic for $\beta$.

The uniqueness statement follows at once from Lemma \ref{L:Subrep}.

\end{proof}

Generalizing this corollary to the multivariate setting where $d>1$ will be the focus of Section \ref{S:FMRCNS}.

\subsection{Weak-$*$ continuous extensions}

The von Neumann algebra $\W_\beta$ generated by $\beta(\O_1)$ inside of $B(L^2(\bT,\lambda))$ is $*$-isometric and weak-$*$ homeomorphic to $L^\infty(\bT,\lambda)$. Theorem \ref{T:classFM} is thus equivalent to the fact that the functional $\tau$ on $\O_1\cong \rC(\bT)$ given as integration against $\mu$ has the property that $\tau\circ \beta^{-1}$ extends weak-$*$ continuously to $\W_\beta$. In this subsection, our aim is to make such a statement more canonical by removing the dependence on the representation $\beta$. At least at the level of operator systems, this is indeed possible because of a strong rigidity property, very much in line with Theorem \ref{T:rigidKennedy}. Unfortunately, the machinery behind the proof of Theorem \ref{T:rigidKennedy} requires that $d\geq 2$. In what follows, we show that a similar rigidity result still holds when $d=1$, using classical tools. We suspect that the statement itself may be known to experts, but we lack an appropriate reference so we provide the details.

When $d=1$, it is well known that $\A_1$ can be identified, up to unitary equivalence, as an algebra of multiplication operators on the space $H^2\subset L^2(\bT,\lambda)$. Here, $H^2$ is the standard Hardy space, defined as the closure in the $L^2$-norm of the space of holomorphic polynomials. For $f\in L^\infty(\bT,\lambda)$, we also recall that the corresponding Toeplitz operator $T_f\in B(H^2)$ is defined as
\[
T_f h=P_{H^2} fh, \quad h\in H^2.
\]
We start with an elementary observation. Recall that $\V_1\subset B(H^2)$ is the weak-$*$ closed operator system generated by $\A_1$.
\begin{lemma}\label{L:Toeplitz}
Define $\Xi:L^\infty(\bT,\lambda)\to B(H^2)$ as
\[
\Xi(f)=T_f, \quad f\in L^\infty(\bT,\lambda).
\]
Then, $\Xi$ is a unital completely isometric linear map which is a weak-$*$ homeomorphic surjection onto $\V_1$.
\end{lemma}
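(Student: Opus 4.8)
The plan is to factor $\Xi$ through the Laurent representation and handle the required properties one at a time. Let $M\colon L^\infty(\bT,\lambda)\to B(L^2(\bT,\lambda))$ be the multiplication representation $M_fg=fg$, and let $C\colon B(L^2(\bT,\lambda))\to B(H^2)$ be the compression $C(X)=P_{H^2}X|_{H^2}$, so that $\Xi=C\circ M$. Here $M$ is a faithful normal $*$-representation of the $\Cstar$-algebra $L^\infty(\bT,\lambda)$, hence unital, completely isometric and weak-$*$ continuous; and $C$ is a unital completely positive map, hence completely contractive, and it is normal, hence weak-$*$ continuous. Thus $\Xi$ is immediately unital, linear, completely contractive and weak-$*$ continuous, and it remains only to prove that it is completely isometric and that it maps $L^\infty(\bT,\lambda)$ weak-$*$ homeomorphically onto $\V_1$.

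For the complete isometry I would show that the Toeplitz compression loses no norm ``at infinity''. Put $H^2_N=\ol{z}^{\,N}H^2\subset L^2(\bT,\lambda)$, an increasing chain of subspaces whose union is dense in $L^2(\bT,\lambda)$. For a matrix symbol $F=[f_{ij}]\in M_n(L^\infty(\bT,\lambda))$, conjugation by $M_z^N\otimes I$ shows that the compression of the Laurent operator $M_F$ to $H^2_N\otimes\bC^n$ is unitarily equivalent to $[T_{f_{ij}}]$, so all these compressions have norm equal to $\|[T_{f_{ij}}]\|$. Since $H^2_N$ increases to $L^2(\bT,\lambda)$, these compressions converge to $M_F$ in the strong operator topology, and lower semicontinuity of the norm gives $\|M_F\|\leq\|[T_{f_{ij}}]\|$. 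As $\|M_F\|=\|F\|_{M_n(L^\infty)}$ and the reverse inequality is the complete contractivity noted above, $\Xi$ is completely isometric.

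Next I would identify $\S_1$ as the image of the continuous symbols. Under the identification of $\A_1$ with the analytic Toeplitz operators one has $\A_1=\{T_\phi:\phi\in\rA(\bD)\}$; since $T_\phi^*=T_{\ol\phi}$ and the symbol map is linear, the span $\A_1+\A_1^*$ equals $\{T_{\phi+\ol\psi}:\phi,\psi\in\rA(\bD)\}$. The symbols $\phi+\ol\psi$ contain all trigonometric polynomials, so their uniform closure is $\CT$; using that $\Xi$ is isometric, the norm closure yields $\S_1=\Xi(\CT)$.

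Finally I would upgrade weak-$*$ continuity to a weak-$*$ homeomorphism onto $\V_1$. Because $\Xi$ is weak-$*$ continuous and isometric, it carries each (weak-$*$ compact) ball of $L^\infty(\bT,\lambda)$ onto a weak-$*$ compact set which, by isometry, is precisely the intersection of $\Xi(L^\infty(\bT,\lambda))$ with the corresponding ball of $B(H^2)$; the Krein--Smulian theorem then shows that $\Xi(L^\infty(\bT,\lambda))$ is weak-$*$ closed and that $\Xi$ is a weak-$*$ homeomorphism onto this range. Combining this with the previous paragraph and the weak-$*$ density of $\CT$ in $L^\infty(\bT,\lambda)$, I obtain
\[
\V_1=\wsclos{\S_1}=\wsclos{\Xi(\CT)}=\Xi(\wsclos{\CT})=\Xi(L^\infty(\bT,\lambda)),
\]
as desired. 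I expect this last paragraph to be the main obstacle: one must combine the isometry and weak-$*$ continuity through Krein--Smulian to secure a genuine homeomorphism onto a weak-$*$ closed range, and only then can weak-$*$ density be used to pin that range down as exactly $\V_1$.
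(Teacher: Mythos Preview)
Your argument is correct. The overall architecture---prove complete isometry, establish weak-$*$ continuity, then upgrade to a weak-$*$ homeomorphism onto a closed range and identify that range as $\V_1$---matches the paper's, and the last two steps are essentially the same (the paper cites \cite[Theorem A.2.5]{BLM2004} for what you do by hand with Krein--Smulian). The genuine difference is in the complete isometry step. The paper first cites the scalar isometry of the symbol map from Douglas, and then invokes the general principle that an isometric unital linear map out of a commutative $\Cstar$-algebra is automatically completely isometric (\cite[Theorem 3.9]{paulsen2002}). Your shift-trick argument is instead a direct, self-contained computation that handles matrix symbols from the outset: conjugating by powers of $M_z$ transports the compression to $H^2$ onto the compressions to $\ol z^{\,N}H^2$, and SOT convergence plus lower semicontinuity recovers the full Laurent norm. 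Your route is more elementary and avoids external citations, at the cost of a small amount of extra work; the paper's route is shorter but leans on structural facts about commutative operator systems. Both are perfectly valid here.
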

\begin{proof}
The fact that $\Xi$ is linear and isometric can be found in \cite[Corollary 7.8]{douglas1998}. 
It is also readily seen that $\Xi$ is completely contractive. Because $L^\infty(\bT,\lambda)$ is a commutative $\rC^*$-algebra, we may invoke \cite[Theorem 3.9]{paulsen2002} to see that $\Xi$ is in fact completely isometric.

 Let $(f_\alpha)$ be a bounded net in $L^\infty(\bT,\lambda)$ converging to some $f$ in the weak-$*$ topology. Let $g,h\in H^2$. Then, $g\ol{h}\in L^1(\bT,\lambda)$, so that
\begin{align*}
\lim_\alpha \langle T_{f_\alpha}g,h\rangle&=\lim_\alpha \int_\bT f_\alpha g\ol{h}d\lambda=\int_\bT fg\ol{h}d\lambda\\
&=\langle T_fg,h \rangle.
\end{align*}
Because $\Xi$ is isometric, the net $(T_{f_\alpha})$ is bounded and we infer that it converges to $T_f$ in the weak-$*$ topology of $B(H^2)$. Basic functional analytic arguments reveal that $\Xi$ is a weak-$*$ homeomorphism onto its range \cite[Theorem A.2.5]{BLM2004}. Finally, we note that $L^\infty(\bT,\lambda)$ is the weak-$*$ closed operator system generated by $\rA(\bD)$, whence the range of $\Xi$ is the weak-$*$ closed operator system generated by $\A_1$, which is $\V_1$.
\end{proof}
%
%
%
%
Next, we establish a useful fact about absolutely continuous representations of $\O_1$.

\begin{lemma}\label{L:genunitary}
Let $\fH$ be separable Hilbert space and let $\pi:\O_1\to B(\fH)$ be an absolutely continuous unital $*$-representation. 
Then, there is a unital, weak-$*$ continuous, completely contractive linear map $\Psi:\V_1\to B(\fH)$ such that
\[
\Psi(a)=\pi(q(a)), \quad a\in \A_1.
\]
\end{lemma}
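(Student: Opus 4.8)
The plan is to reduce the statement to the classical Borel functional calculus of a single unitary. Under the identifications recalled in the proof of Lemma~\ref{L:Toeplitz}, we have $\O_1\cong\rC(\bT)$ with $q(\A_1)$ corresponding to $\rA(\bD)$, while the map $\Xi$ identifies $\V_1$ with $L^\infty(\bT,\lambda)$, the subalgebra $\L_1$ with $H^\infty$, and $\A_1$ with $\rA(\bD)$ (so that $a\in\A_1$ is $T_\phi$ for a unique $\phi\in\rA(\bD)$). Since $q(L_1)$ is a unitary in $\O_1$ (its defect $I-L_1L_1^*$ is compact, hence killed by $q$), the operator $U=\pi(q(L_1))\in B(\fH)$ is unitary, and $\pi$ is just the $\rC(\bT)$-functional calculus $f\mapsto\int_\bT f\,dE$ of $U$, where $E$ is the spectral measure of $U$. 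The entire statement will follow once we show that $E\ll\lambda$; the map $\Psi$ is then obtained by extending this functional calculus to $L^\infty(\bT,\lambda)$ and transporting it to $\V_1$ through $\Xi$. Note that only the implication ``$\pi$ absolutely continuous $\Rightarrow E\ll\lambda$'' is needed.

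The crux is to deduce $E\ll\lambda$ from absolute continuity of $\pi$. By definition there is a weak-$*$ continuous homomorphism $\pi':\L_1\to B(\fH)$ with $\pi'(a)=\pi(q(a))$ for $a\in\A_1$; under $\L_1\cong H^\infty$ this is a weak-$*$ continuous functional calculus for $U$ extending the polynomial one. Suppose toward a contradiction that $E$ has a nonzero part singular with respect to $\lambda$. Then by inner regularity there is a compact set $K\subset\bT$ with $\lambda(K)=0$ and $E(K)\neq0$; fix a unit vector $\xi\in\ran E(K)$, so the scalar measure $\mu_\xi=\langle E(\cdot)\xi,\xi\rangle$ is a probability measure supported on $K$. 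By the Rudin--Carleson theorem, the closed $\lambda$-null set $K$ is a peak set for $\rA(\bD)$: there is $\phi\in\rA(\bD)$ with $\|\phi\|_\infty\le1$, $\phi\equiv1$ on $K$, and $|\phi|<1$ off $K$. Hence $\phi^n\to0$ pointwise $\lambda$-a.e.\ with $\|\phi^n\|_\infty\le1$, so by dominated convergence $\int_\bT\phi^n g\,d\lambda\to0$ for every $g\in L^1$, that is, $\phi^n\to0$ in the weak-$*$ topology of $H^\infty=(L^1/H^1_0)^*$. Weak-$*$ continuity of $\pi'$ then forces $\pi'(\phi)^n=\pi'(\phi^n)\to0$ in the weak operator topology. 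On the other hand, $\phi\in\rA(\bD)=q(\A_1)$, so $\pi'(\phi)=\pi(\phi)$ (regarding $\phi\in\rC(\bT)\cong\O_1$) and therefore $\pi'(\phi)^n=\pi(\phi^n)=\int_\bT\phi^n\,dE$. Since $\mu_\xi$ lives on $K$ where $\phi\equiv1$, we get $\langle\pi'(\phi)^n\xi,\xi\rangle=\int_K\phi^n\,d\mu_\xi=\mu_\xi(K)=1$ for every $n$, contradicting convergence to $0$. Thus $E\ll\lambda$.

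It remains to assemble the map. As $E\ll\lambda$, every $\lambda$-null set is $E$-null, so each $f\in L^\infty(\bT,\lambda)$ is unambiguously $E$-integrable and $\widetilde{\pi}(f):=\int_\bT f\,dE$ defines a unital $*$-representation $\widetilde{\pi}:L^\infty(\bT,\lambda)\to B(\fH)$. Being a unital $*$-homomorphism it is completely positive, hence completely contractive; and the absolute continuity of the coefficient measures $\langle E(\cdot)\eta,\zeta\rangle\ll\lambda$ makes $\widetilde{\pi}$ weak-$*$ continuous, since its matrix coefficients pair $L^\infty(\bT,\lambda)$ against the corresponding $L^1$-densities. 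Setting $\Psi=\widetilde{\pi}\circ\Xi^{-1}:\V_1\to B(\fH)$ and invoking Lemma~\ref{L:Toeplitz} (so that $\Xi^{-1}$ is unital, completely isometric, and a weak-$*$ homeomorphism), we conclude that $\Psi$ is unital, completely contractive, and weak-$*$ continuous. Finally, for $a\in\A_1$ write $a=T_\phi$ with $\phi\in\rA(\bD)$, so that $\Xi^{-1}(a)=\phi$ and $\Psi(a)=\widetilde{\pi}(\phi)=\int_\bT\phi\,dE=\pi(\phi)=\pi(q(a))$, as required.

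The one genuine obstacle is the middle paragraph: absolute continuity supplies a weak-$*$ continuous calculus only over the \emph{analytic} algebra $H^\infty$, and the difficulty is to leverage this one-sided information to control the full spectral measure $E$. The peak-function construction furnished by Rudin--Carleson is precisely what converts weak-$*$ continuity on $H^\infty$ into absolute continuity of $E$. Separability of $\fH$ is convenient at this point, as it renders the weak-$*$ topology on bounded subsets of $B(\fH)$ metrizable and thereby legitimizes the sequential argument, though a routine net version would work in general.
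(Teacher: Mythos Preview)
Your proof is correct, but the route differs from the paper's. The paper first invokes the structure theory of separably acting abelian von Neumann algebras to realize the von Neumann algebra $\W_\pi$ generated by $\pi(\O_1)$ as $L^\infty(\bT,\nu)$ for some scalar probability measure $\nu$, and then deduces $\nu\ll\lambda$ from the classical F.\ and M.\ Riesz theorem (Theorem~\ref{T:classFM}) together with a result from Rudin; the map $\Psi$ is then assembled as a composition $\Theta\circ\rho\circ\Xi^{-1}$ with $\rho:L^\infty(\bT,\lambda)\to L^\infty(\bT,\nu)$ the obvious restriction. You instead work directly with the projection-valued spectral measure $E$ of the unitary $U=\pi(q(L_1))$ and prove $E\ll\lambda$ via the Rudin--Carleson peak-set theorem rather than via F.\ and M.\ Riesz. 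What your approach buys is that it bypasses the separable abelian von Neumann algebra structure theorem entirely (so, as you note, separability of $\fH$ is not genuinely used), and it gives a very transparent mechanism---the peak function $\phi$---for converting weak-$*$ continuity on $H^\infty$ into absolute continuity of the spectral data. The paper's approach, on the other hand, ties the lemma directly back to Theorem~\ref{T:classFM}, reinforcing the paper's theme that this lemma is a repackaging of the classical F.\ and M.\ Riesz theorem; since Rudin--Carleson is itself typically derived from F.\ and M.\ Riesz, your argument is not strictly more elementary, just differently organized. One small point: your claim that $\widetilde\pi$ is weak-$*$ continuous is justified only at the level of matrix coefficients, i.e.\ weak-$*$ to WOT; the upgrade to weak-$*$ to weak-$*$ follows from boundedness of $\widetilde\pi$ and Krein--Smulian, which is routine but worth a word.
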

\begin{proof}
Let $\W_\pi\subset B(\fH)$ denote the von Neumann algebra generated by $\pi(\O_1)$.
By \cite[Theorem II.2.5]{davidson1996}, there is a regular Borel probability measure $\nu$ on $\bT$ and a weak-$*$ homeomorphic $*$-isomorphism $\Theta:L^\infty(\bT,\nu)\to \W_\pi$ such that
\[
\Theta (t)=\pi(t), \quad t\in \O_1.
\]
Because $\pi$ is assumed to be absolutely continuous, the map
\[
a\mapsto \Theta^{-1}(\pi(q(a))), \quad a\in \A_1
\]
extends weak-$*$ continuously to $\L_1$, which implies that $\nu$ is absolutely continuous with respect to $\lambda$; this is a consequence of Theorem \ref{T:classFM} along with \cite[Theorem 9.2.1]{rudin2008}.  Thus, there is a weak-$*$ continuous unital $*$-homomorphism $\rho:L^\infty(\bT,\lambda)\to L^\infty(\bT,\nu)$ with the property that for each $f\in L^\infty(\bT,\lambda)$ we have
\[
\rho(f)(\zeta)=f(\zeta)
\]
for $[\nu]$-a.e. $\zeta\in \bT$. 
Recall now from Lemma \ref{L:Toeplitz} that there is a unital, weak-$*$ homeomorphic and completely isometric linear map $\Xi:L^\infty(\bT,\lambda)\to \V_1$ such that
\[
\Xi(q(a))=a, \quad a\in \A_1.
\]
The map
$
\Psi= \Theta\circ\rho \circ \Xi^{-1}:\V_1\to \W_\pi
$
has all the required properties.
\end{proof}

Before proving our announced rigidity result, we require one more technical fact.

\begin{lemma}\label{L:unitary}
Let $\fH$ be separable Hilbert space and let $\pi:\O_1\to B(\fH)$ be a unital $*$-representation. Let $\L_\pi\subset B(\fH)$ denote the closure of $\pi(q(\A_1))$ in the weak operator topology, let $\V_\pi\subset B(\fH)$ denote the weak-$*$ closed operator system generated by $\pi(q(\A_1))$, and let $\W_\pi\subset B(\fH)$ denote the von Neumann algebra generated by $\pi(q(\A_1))$. Then, the following statements are equivalent.
\begin{enumerate}[{\rm (i)}]
\item The algebra $\L_\pi$ is algebraically isomorphic to a subalgebra of $\L_1$.
\item There is a unital, weak-$*$ homeomorphic, completely isometric linear isomorphism $\Psi:\V_1\to \V_\pi$ such that 
\[
\Psi(a)=\pi(q(a)), \quad a\in \A_1.
\]
\item There is a regular Borel probability measure $\nu$ on $\bT$ that is mutually absolutely continuous with respect to Lebesgue measure and a weak-$*$ homeomorphic $*$-isomorphism $\Theta:L^\infty(\bT,\nu)\to \W_\pi$ such that
\[
\Theta (t)=\pi(t), \quad t\in \O_1.
\]
\end{enumerate}
\end{lemma}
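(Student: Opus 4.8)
The plan is to establish the three statements equivalent through the cycle (iii) $\Rightarrow$ (ii) $\Rightarrow$ (i) $\Rightarrow$ (iii), treating the last implication as the heart of the matter. Throughout I write $U=\pi(z)$, a unitary generating the abelian von Neumann algebra $\W_\pi=\{U\}''$, and I use freely that the WOT-closure $\L_\pi$ of $\pi(q(\A_1))$ agrees with its weak-$*$ closure.

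For (iii) $\Rightarrow$ (ii): given $\nu\sim\lambda$ together with the weak-$*$ homeomorphic $*$-isomorphism $\Theta:L^\infty(\bT,\nu)\to\W_\pi$, mutual absolute continuity means that $L^\infty(\bT,\nu)$ and $L^\infty(\bT,\lambda)$ are the same von Neumann algebra (identical measurable functions and identical weak-$*$ topology), so I regard $\Theta$ as acting on $L^\infty(\bT,\lambda)$. Composing with the weak-$*$ homeomorphic complete isometry $\Xi:L^\infty(\bT,\lambda)\to\V_1$ of Lemma \ref{L:Toeplitz}, I set $\Psi=\Theta\circ\Xi^{-1}$. Then $\Psi$ is unital, weak-$*$ homeomorphic and completely isometric, and $\Psi(a)=\Theta(q(a))=\pi(q(a))$ for $a\in\A_1$. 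Since $\Psi$ is a weak-$*$ homeomorphism carrying the generating algebra $\A_1$ of $\V_1$ onto $\pi(q(\A_1))$, its image is exactly the weak-$*$ closed operator system $\V_\pi$ generated by $\pi(q(\A_1))$, which gives (ii).

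For (ii) $\Rightarrow$ (i): here $\L_1$ sits inside $\V_1$ as the analytic Toeplitz operators $\Xi(H^\infty)$. Restricting $\Psi$ to $\L_1$ and using that $\Psi$ is weak-$*$ continuous and multiplicative on the weak-$*$ dense subalgebra $\A_1$ (where it equals the homomorphism $\pi\circ q$), separate weak-$*$ continuity of the product shows $\Psi|_{\L_1}$ is an algebra homomorphism; being a weak-$*$ homeomorphism it maps $\L_1=\wsclos{\A_1}$ onto $\wsclos{\pi(q(\A_1))}=\L_\pi$. Thus $\Psi$ restricts to an algebra isomorphism $\L_1\cong\L_\pi$, and in particular (i) holds.

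The main obstacle is (i) $\Rightarrow$ (iii), where the operator-algebraic rigidity of Theorem \ref{T:rigidKennedy} is unavailable and must be replaced by classical Hardy-space function theory. By the multiplicity theory of abelian von Neumann algebras on a separable space (as in \cite[Theorem II.2.5]{davidson1996}, already used in Lemma \ref{L:genunitary}), there are a regular Borel probability measure $\nu$ on $\bT$ and a weak-$*$ homeomorphic $*$-isomorphism $\Theta:L^\infty(\bT,\nu)\to\W_\pi$ with $\Theta(z)=U$, hence $\Theta(t)=\pi(t)$ on $\O_1$. Under $\Theta$ the algebra $\L_\pi$ corresponds to $H^\infty(\nu)$, the weak-$*$ closure of the polynomials in $z$ inside $L^\infty(\bT,\nu)$, so I must deduce $\nu\sim\lambda$ from the hypothesis that $H^\infty(\nu)$ embeds algebraically in $H^\infty$. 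The key leverage is that a subalgebra of the integral domain $H^\infty$ contains no nontrivial idempotent. Writing the Lebesgue decomposition $d\nu=w\,d\lambda+d\nu_s$, I would argue as follows. If $\nu_s\neq0$, then the characteristic function of a carrier of $\nu_s$ lies in $H^\infty(\nu)$ — it is a weak-$*$ limit of analytic functions peaking on the $\lambda$-null set carrying $\nu_s$, precisely because $\nu_s\perp\lambda$ — yielding a nontrivial idempotent and contradicting (i). If instead $w$ vanishes on a set of positive Lebesgue measure, then $\int_\bT\log w\,d\lambda=-\infty$, so the Szeg\H{o} condition fails and Wermer's theorem forces $H^\infty(\nu)=L^\infty(\nu)$ to be self-adjoint, which again supplies nontrivial idempotents from disjoint carriers. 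Hence $\nu_s=0$ and $w>0$ almost everywhere, that is $\nu\ll\lambda$ and $\lambda\ll\nu$, so $\nu\sim\lambda$ and $\Theta$ is exactly the map required by (iii). The delicate point, demanding genuine function theory rather than formal manipulation, is this passage from the purely algebraic embedding $\L_\pi\hookrightarrow H^\infty$ to the mutual absolute continuity of $\nu$ and $\lambda$.
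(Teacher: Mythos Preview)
Your arguments for (iii)$\Rightarrow$(ii) and (ii)$\Rightarrow$(i) are essentially those of the paper (the paper dismisses (ii)$\Rightarrow$(i) as trivial). The substantive divergence is in (i)$\Rightarrow$(iii). The paper never touches idempotents: it first invokes \cite[Corollary 2.8]{davidson2001} to upgrade the bare algebraic embedding $\L_\pi\hookrightarrow\L_1$ to a canonical completely isometric, weak-$*$ homeomorphic isomorphism $\Phi:\L_1\to\L_\pi$ extending $\pi\circ q$. It then obtains $\nu\ll\lambda$ exactly as in Lemma~\ref{L:genunitary} (via the classical F.\ and M.\ Riesz theorem), and deduces $\lambda\ll\nu$ by a modulus trick: given $E$ with $\nu(E)=0$, pick $\gamma\in H^\infty$ with $|\gamma|=2\chi_E+\chi_{\bT\setminus E}$ $[\lambda]$-a.e.; since $\Phi$ is isometric one gets $\|\gamma\|_{L^\infty(\lambda)}=\|\gamma\|_{L^\infty(\nu)}=1$, forcing $\lambda(E)=0$. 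Your route is more self-contained in that it avoids the Davidson--Katsoulis--Pitts structure theorem altogether, trading it for peak-set and Hardy-space function theory; the paper's route extracts extra leverage from the isometric \emph{surjection} $\L_1\to\L_\pi$, which is strictly stronger than the algebraic embedding you actually use.

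There is, however, a genuine soft spot in your second bullet. The implication ``Szeg\H{o} fails $\Rightarrow$ $H^\infty(\nu)=L^\infty(\nu)$'' is false as stated: take $w>0$ $[\lambda]$-a.e.\ with $\log w\notin L^1(\lambda)$; then Szeg\H{o} fails, yet $\nu\sim\lambda$ and $H^\infty(\nu)=H^\infty\neq L^\infty(\nu)$. Szeg\H{o}--Kolmogorov is an $L^2$-closure statement and does not lift to weak-$*$ density in $L^\infty$. What is actually true under your stronger hypothesis $\lambda(\{w=0\})>0$ (with $\nu_s=0$) comes from duality, not Szeg\H{o}: any $g\in L^1(\nu)$ annihilating the analytic polynomials extends by zero to a function in $\ol{z}H^1$ on $\bT$ that vanishes on $\{w=0\}$, and the uniqueness theorem for $H^1$ then forces $g=0$, giving $H^\infty(\nu)=L^\infty(\nu)$. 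A related loose end: your idempotent mechanism cannot see the degenerate case $\nu=\delta_\zeta$, where $H^\infty(\nu)=\bC$ also has no nontrivial idempotents; it is precisely the isometric \emph{isomorphism} $\L_1\cong\L_\pi$ coming from \cite[Corollary 2.8]{davidson2001} in the paper's argument that excludes such collapses, and this is exactly the piece your purely algebraic-embedding approach is missing.
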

\begin{proof}
By Lemma \ref{L:Toeplitz} we find a unital, weak-$*$ homeomorphic and completely isometric linear map $\Xi:L^\infty(\bT,\lambda)\to \V_1$ such that 
\[
\Xi(q(a))=a, \quad a\in \A_1.
\]
 This notation will be used throughout the proof. 

%

(iii) $\Rightarrow$ (ii): By assumption, there is a weak-$*$ homeomorphic $*$-isomorphism $\rho: L^\infty(\bT,\lambda)\to L^\infty(\bT,\nu)$ such that 
\[
\rho(t)=t, \quad t\in \O_1.
\]
In particular, 
\[
\V_\pi=\Theta\circ \rho \circ \Xi^{-1}(\V_1)=\W_\pi.
\] 
Thus, we may take $\Psi=\Theta\circ \rho\circ \Xi^{-1}$.

(ii) $\Rightarrow$ (i): This is trivial.

(i) $\Rightarrow$ (iii):  By \cite[Corollary 2.8]{davidson2001}, we conclude that there  is a unital, weak-$*$ homeomorphic, completely isometric isomorphism $\Phi:\L_1\to \L_\pi$ such that 
\[
\Phi(a)=\pi(q(a)), \quad a\in \A_1.
\]
In turn, by \cite[Theorem II.2.5]{davidson1996}, there is a regular Borel probability measure $\nu$ on $\bT$ and a weak-$*$ homeomorphic $*$-isomorphism $\Theta:L^\infty(\bT,\nu)\to \W_\pi$ such that
\[
\Theta (t)=\pi(t), \quad t\in \O_1.
\]
Arguing as in the proof of Lemma \ref{L:genunitary} above, we infer that $\nu$ is absolutely continuous with respect to $\lambda$ and that  there is a  weak-$*$ continuous unital $*$-homomorphism $\rho:L^\infty(\bT,\lambda)\to L^\infty(\bT,\nu)$ such that for $f\in L^\infty(\bT,\lambda)$ we have
\[
\rho(f)(\zeta)=f(\zeta)
\]
for $[\nu]$-a.e. $\zeta\in \bT$. 

Let $E\subset \bT$ be a measurable subset such that $\nu(E)=0$. There is a  function $\gamma\in  \Xi^{-1}(\L_1)$ such that $|\gamma|=2\chi_E+\chi_{\bT\setminus E}$ $[\lambda]$-a.e. on $\bT$ \cite[page 62]{hoffman1969}. Choose a sequence $(a_n)$ in $\A_1$ such that $(\Xi^{-1}(a_n))$ converges to $\gamma$ in the weak-$*$ topology of $L^\infty(\bT,\lambda)$. Thus, 
\begin{align*}
(\Phi\circ\Xi)(\gamma)&=\lim_{n\to\infty}\Phi(a_n)=\lim_{n\to\infty}\pi(q(a_n))\\
&=\lim_{n\to\infty}(\Theta\circ \rho \circ \Xi^{-1})(a_n)\\
&=(\Theta\circ \rho)(\gamma).
\end{align*}
But we know that $\Phi$ is isometric, whence
\begin{align*}
\|\gamma\|_{L^\infty(\bT,\lambda)}&=\|\Xi(\gamma)\|=\|(\Phi\circ\Xi)(\gamma)\|\\
&=\|(\Theta\circ \rho)(\gamma)\|=\|\rho(\gamma)\|\\
&=\|2\chi_E+\chi_{\bT\setminus E}\|_{L^\infty(\bT,\nu)}=1
\end{align*}
where the last equality follows from the fact that $\nu(E)=0$. Thus, $\|\gamma\|_{L^\infty(\bT,\lambda)}=1$, which forces $\lambda(E)=0$. We conclude that $\lambda$ is absolutely continuous with respect to $\nu$, as desired.

\end{proof}

%

%
%

We can now prove our rigidity result, which is a univariate version of Theorem \ref{T:rigidKennedy}.

\begin{theorem}\label{T:rigidity}
Let $\fH$ be separable Hilbert space and let $\pi:\fT_1\to B(\fH)$ be a unital $*$-representation. Let $\L_\pi\subset B(\fH)$ denote the closure of $\pi(\A_1)$ in the weak operator topology, and let $\V_\pi\subset B(\fH)$ denote the weak-$*$ closed operator system generated by $\pi(\A_1)$.  Then, the following statements are equivalent.
\begin{enumerate}[{\rm (i)}]
\item The algebra $\L_\pi$ is algebraically isomorphic to a subalgebra of $\L_1$.
\item There is a unital, weak-$*$ homeomorphic, completely isometric linear isomorphism $\Psi:\V_1\to \V_\pi$ such that 
\[
\Psi(a)=\pi(a), \quad a\in \A_1.
\]
\end{enumerate}
\end{theorem}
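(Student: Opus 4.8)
The plan is to establish the nontrivial implication $\mathrm{(i)}\Rightarrow\mathrm{(ii)}$ by decomposing the single isometry $V=\pi(L_1)$ according to its Wold decomposition and reducing the unitary part to the already-available Lemma \ref{L:unitary}. The converse $\mathrm{(ii)}\Rightarrow\mathrm{(i)}$ is immediate: if $\Psi:\V_1\to\V_\pi$ is as in (ii), then since $\Psi$ is a weak-$*$ homeomorphism with $\Psi(\A_1)=\pi(\A_1)$, it carries $\wsclos{\A_1}=\L_1$ onto $\wsclos{\pi(\A_1)}=\L_\pi$, and its restriction is a completely isometric algebra isomorphism exhibiting $\L_\pi$ as (isomorphic to) a subalgebra of $\L_1$.

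For $\mathrm{(i)}\Rightarrow\mathrm{(ii)}$ I would write $\fH=\fH_s\oplus\fH_u$ for the Wold decomposition of $V=\pi(L_1)$, where $V_u=V|_{\fH_u}$ is unitary and $V_s=V|_{\fH_s}$ is a unilateral shift; both summands reduce $\Cstar(V)=\pi(\fT_1)$. If $\fH_s=\{0\}$, then $V$ is unitary, so $\pi(\K)=0$ and $\pi$ factors as $\sigma\circ q$ for a unital $*$-representation $\sigma:\O_1\to B(\fH)$ with $\L_\sigma=\L_\pi$ and $\V_\sigma=\V_\pi$. In this case hypothesis (i) is exactly hypothesis (i) of Lemma \ref{L:unitary} for $\sigma$, and that lemma produces the desired $\Psi$ at once.

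Assume then $\fH_s\neq\{0\}$. On the shift summand, $\pi_s:=\pi|_{\fH_s}$ is unitarily equivalent to an ampliation $\id^{(\kappa)}$ of the identity representation of $\fT_1$, so the ampliation $f\mapsto f\otimes I_\kappa$ yields a unital, completely isometric, weak-$*$ homeomorphic map $\Psi_s:\V_1\to\V_{\pi_s}$ with $\Psi_s(a)=\pi_s(a)$ for $a\in\A_1$. On the unitary summand, $\pi_u:=\pi|_{\fH_u}$ factors as $\sigma_u\circ q$ for some $\sigma_u:\O_1\to B(\fH_u)$, and the key step is to check that $\sigma_u$ is absolutely continuous. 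By \cite[Corollary 2.8]{davidson2001}, hypothesis (i) upgrades to a canonical unital, completely isometric, weak-$*$ homeomorphic isomorphism $\Phi:\L_1\to\L_\pi$ with $\Phi(a)=\pi(a)$ for $a\in\A_1$; composing $\Phi$ with the normal compression $X\mapsto X|_{\fH_u}$ to the reducing subspace $\fH_u$ gives a weak-$*$ continuous homomorphism $\L_1\to B(\fH_u)$ extending $a\mapsto\sigma_u(q(a))$, which is precisely the statement that $\sigma_u$ is absolutely continuous. Lemma \ref{L:genunitary} then supplies a unital, weak-$*$ continuous, completely contractive map $\Psi_u:\V_1\to B(\fH_u)$ with $\Psi_u(a)=\pi_u(a)$.

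Finally I would assemble $\Psi:=\Psi_s\oplus\Psi_u:\V_1\to B(\fH)$, a unital, weak-$*$ continuous, completely contractive map with $\Psi(a)=\pi(a)$ for $a\in\A_1$. The crucial observation is that, although $\Psi_u$ is only completely contractive, the shift summand forces complete isometry: for $f\in M_n(\V_1)$ one has $\|\Psi^{(n)}(f)\|=\max\{\|\Psi_s^{(n)}(f)\|,\|\Psi_u^{(n)}(f)\|\}$, where the first term equals $\|f\|$ because $\Psi_s$ is completely isometric while the second is at most $\|f\|$. Being a completely isometric, weak-$*$ continuous map on the dual space $\V_1$, $\Psi$ is then a weak-$*$ homeomorphism onto its weak-$*$ closed range by \cite[Theorem A.2.5]{BLM2004}; as that range contains $\pi(\S_1)$ weak-$*$ densely and sits inside $\V_\pi$, it equals $\V_\pi$. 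I expect the main obstacle to be the absolute continuity of $\sigma_u$: in the absence of a shift summand one genuinely needs the full force of Lemma \ref{L:unitary} (mutual absolute continuity of the spectral measure), but once a shift part is present the ``domination'' of the norm by $\Psi_s$ lets one get by with the merely contractive map furnished by Lemma \ref{L:genunitary}.
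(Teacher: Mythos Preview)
Your proposal is correct and follows essentially the same route as the paper's proof: both use the Wold decomposition of $\pi(L_1)$, invoke Lemma~\ref{L:unitary} when there is no shift part, and otherwise combine the completely isometric ampliation on the shift summand with the merely contractive map from Lemma~\ref{L:genunitary} on the unitary summand (whose absolute continuity is obtained via \cite[Corollary 2.8]{davidson2001}), then appeal to \cite[Theorem A.2.5]{BLM2004}. The only cosmetic difference is that the paper deduces absolute continuity of $\pi$ first and then of $\sigma\circ q$, whereas you compress $\Phi$ directly to $\fH_u$; the content is identical.
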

\begin{proof}
We only need to show that (i) implies (ii), as the converse is trivial. Thus, assume that $\L_\pi$ is algebraically isomorphic to a subalgebra of $\L_1$. By \cite[Corollary 2.8]{davidson2001}, we conclude that there  is a unital, weak-$*$ homeomorphic, completely isometric isomorphism $\Phi:\L_1\to \L_\pi$ such that 
\[
\Phi(a)=\pi(a), \quad a\in \A_1.
\]
By the Wold decomposition, up to unitary equivalence we may write $\fH=H^{2(\kappa)}\oplus \fK$ and  $\pi=\id^{(\kappa)}\oplus (\sigma\circ q)$ for some cardinal $\kappa$ and some unital $*$-representation $\sigma:\O_1\to B(\fK)$. If $\kappa=0$, then the statement follows at once from Lemma \ref{L:unitary}. 

Assume henceforth that $\kappa>0$. The existence of $\Phi$ implies in particular that $\pi$ is absolutely continuous, and hence so is $\sigma\circ q$.
We may thus apply Lemma \ref{L:genunitary} to obtain  a unital, weak-$*$ continuous, completely contractive linear map $\Psi_0:\V_1\to B(\fK)$ such that 
\[
\Psi_0(a)=\sigma(q(a)), \quad a\in \A_1. 
\]
Consider now the map $\Psi:\V_1\to \V_1^{(\kappa)}\oplus B(\fK)$ defined as
\[
\Psi(v)=v^{(\kappa)}\oplus \Psi_0(v), \quad v\in \V_1.
\]
Clearly, $\Psi$ is unital, weak-$*$ continuous and completely isometric. By  \cite[Theorem A.2.5]{BLM2004}, we infer that $\Psi$ is weak-$*$ homeomorphic onto its image.  
Finally, we claim that $\Psi(\V_1)=\V_\pi$. To see this, note that 
\[
\Psi(a)=a^{(\kappa)}\oplus \sigma(q(a))=\pi(a) \quad a\in \A_1.
\]
Then, $\Psi(\V_1)$ is simply the weak-$*$ closure of the operator system generated by $\pi(\A_1)$, which is $\V_\pi$.
\end{proof}

As a consequence of this rigidity result, we can extract another consequence of Theorem \ref{T:classFM}, which will motivate some of our work below.

\begin{corollary}\label{C:classFM2}
Let $\tau:\S_1\to \bC$ be a bounded linear functional with the property that $\A_1\subset \ker \tau$. Then, $\tau$ extends weak-$*$ continuously to $\V_1$.
\end{corollary}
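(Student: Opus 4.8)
The plan is to translate the statement into the classical function-theoretic framework by means of Lemma \ref{L:Toeplitz} and then appeal directly to Theorem \ref{T:classFM}. Recall that $\Xi:L^\infty(\bT,\lambda)\to\V_1$, defined by $\Xi(f)=T_f$, is a unital completely isometric weak-$*$ homeomorphism onto $\V_1$, and that $\Xi$ maps $\rA(\bD)$ onto $\A_1$ (since $\Xi(z)=T_z=L_1$). Because every element of $\A_1+\A_1^*$ is a Toeplitz operator with continuous symbol and $\Xi$ is isometric, the preimage $\Xi^{-1}(\S_1)$ is a closed subspace of $\CT$ containing $\rA(\bD)$.

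First I would transport $\tau$ to the functional $\td\tau=\tau\circ\Xi$ on the subspace $\Xi^{-1}(\S_1)\subset\CT$. This functional is bounded, since $\Xi$ is isometric, and it annihilates $\rA(\bD)$, since $\A_1\subset\ker\tau$ and $\Xi(\rA(\bD))=\A_1$. Extending $\td\tau$ by the Hahn--Banach theorem to a bounded functional on all of $\CT$ and representing the latter by the Riesz representation theorem, I obtain a regular Borel measure $\mu$ on $\bT$ with $\int_\bT h\,d\mu=\td\tau(h)$ for all $h\in\Xi^{-1}(\S_1)$. In particular, because $\rA(\bD)$ already lies in the domain $\Xi^{-1}(\S_1)$ and $\td\tau$ vanishes on it, the measure $\mu$ annihilates $\rA(\bD)$.

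Now Theorem \ref{T:classFM} applies and forces $\mu$ to be absolutely continuous with respect to $\lambda$; write $d\mu=r\,d\lambda$ with $r\in L^1(\bT,\lambda)$. The functional $f\mapsto\int_\bT fr\,d\lambda$ on $L^\infty(\bT,\lambda)$ is weak-$*$ continuous, being integration against an $L^1$ function, and it agrees with $\td\tau$ on $\Xi^{-1}(\S_1)$. Composing with the weak-$*$ homeomorphism $\Xi^{-1}:\V_1\to L^\infty(\bT,\lambda)$, I obtain a weak-$*$ continuous functional on $\V_1$ whose restriction to $\S_1$ equals $\tau$; this is the sought-after extension.

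I do not anticipate any serious obstacle, as the argument amounts to a translation through $\Xi$ followed by an invocation of the classical theorem; in particular, the rigidity result Theorem \ref{T:rigidity} is not needed here, since we are dealing with the identity representation on $H^2$. The single point deserving attention is that the Hahn--Banach extension of $\td\tau$ to $\CT$ continues to annihilate $\rA(\bD)$ --- and hence that Theorem \ref{T:classFM} is applicable --- which is guaranteed precisely because $\rA(\bD)$ is contained in the subspace $\Xi^{-1}(\S_1)$ on which $\td\tau$ is already defined and zero.
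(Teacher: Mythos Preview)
Your argument is correct. It is, in fact, a slightly more direct route than the paper's.

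The paper transports the problem to the bilateral-shift model: it invokes the rigidity Theorem~\ref{T:rigidity} for the representation $\beta:\O_1\to B(L^2(\bT,\lambda))$ to obtain a weak-$*$ homeomorphic isomorphism $\Psi:\V_1\to\V_\beta$, then Hahn--Banach extends $\tau\circ\Psi^{-1}$ from $\beta(q(\S_1))$ to $\beta(\O_1)\cong\CT$, applies Theorem~\ref{T:classFM}, and pulls the resulting weak-$*$ continuous functional back through $\Psi$. You instead stay on $H^2$ and use Lemma~\ref{L:Toeplitz} directly: the Toeplitz map $\Xi$ already gives the needed weak-$*$ homeomorphism between $L^\infty(\bT,\lambda)$ and $\V_1$, so the detour through $\beta$ and the rigidity theorem is unnecessary. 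The logical skeleton (transport to a concrete commutative model, apply F.\ and M.\ Riesz, transport back) is the same in both proofs; what you gain is that you only need the elementary Lemma~\ref{L:Toeplitz} rather than the full Theorem~\ref{T:rigidity}, which the paper develops partly for later, multivariate purposes.

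One small simplification you may wish to note: since $\rA(\bD)+\ol{\rA(\bD)}$ is uniformly dense in $\CT$ by Stone--Weierstrass, in fact $\Xi^{-1}(\S_1)=\CT$, so your Hahn--Banach step is not actually needed --- $\td\tau$ is already defined on all of $\CT$. Your cautious formulation is of course still correct.
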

\begin{proof}
Consider the unital injective $*$-representation $\beta:\O_1\to B(L^2(\bT,\lambda))$ as before.
 Let $\L_\beta\subset B(L^2(\bT,\lambda))$ denote the closure of $\beta(q(\A_1))$ in the weak operator topology, let $\V_\beta\subset B(L^2(\bT,\lambda))$ denote the weak-$*$ closed operator system generated by $\beta(q(\A_1))$ and let $\W_\beta\subset B(L^2(\bT,\lambda))$ denote the von Neumann algebra generated by $\beta(q(\A_1))$.
 
It is well known that $\L_\beta$ is unitarily equivalent to $\L_1$. Theorem \ref{T:rigidity} then implies that there is a unital, weak-$*$ homeomorphic, completely isometric linear isomorphism $\Psi:\V_1\to \V_\beta$ such that 
\[
\Psi(a)=\beta(q(a)), \quad a\in \A_1.
\]
Let $\tau'$ be a Hahn--Banach extension to $\beta(\O_1)$ of the functional
\[
\beta(q(s))\mapsto (\tau\circ \Psi^{-1})(\beta(q(s))), \quad s\in \S_1.
\]
As noted at the beginning of this subsection, an immediate consequence of Theorem \ref{T:classFM} is that there is a weak-$*$ continuous functional $\tau'':\W_\beta\to \bC$ extending $\tau'$. The functional $\tau''\circ\Psi$ on $\V_1$ is then weak-$*$ continuous and satisfies
\[
(\tau''\circ \Psi)(s)=\tau''(\beta(q(s))=(\tau\circ \Psi^{-1})(\beta(q(s)))=\tau(s), \quad s\in \S_1
\]
as desired.
\end{proof}

%
%
%

Generalizing this corollary to the multivariate setting where $d>1$ will be the focus of Section \ref{S:FMRnAC}.

\section{Riesz representations for analytic functionals}\label{S:FMRCNS} 
The goal of this section is to prove a generalization of Corollary \ref{C:classFM1}. As explained in Section \ref{S:classFM}, this can be seen as a generalization of the classical F. and M. Riesz theorem.

\subsection{Completely non-singular representations}\label{SS:CNSrep}
In this short subsection, we establish some useful properties of completely non-singular representations that are of independent interest.

\begin{proposition}\label{P:CNSrestriction}
Let $\pi:\fT_d\to B(\fH)$ be a unital $*$-representation. Let $\xi\in \fH$ be a cyclic vector for $\pi$ and let $\rho:\fT_d\to B(\fH)$ be the representation induced by $\pi$ on the invariant subspace $\ol{\pi(\A_d)\xi}$. If $\rho$ is absolutely continuous, then $\pi$ is completely non-singular.
\end{proposition}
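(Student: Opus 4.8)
The plan is to translate both the hypothesis and the desired conclusion into statements about the universal structure projection $\fq$ by means of Lemma \ref{L:UnivStructProj}, and then to argue by contradiction. Write $\fM=\ol{\pi(\A_d)\xi}$ and let $P$ denote the orthogonal projection onto $\fM$. First I would record the content of the hypothesis: since $\rho$ is absolutely continuous, part (i) of Lemma \ref{L:UnivStructProj} gives $\widehat{\pi}(\fq)P=0$. Because $\xi\in\fM$ and hence $P\xi=\xi$, this delivers the single concrete fact I intend to exploit, namely $\widehat{\pi}(\fq)\xi=0$.

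Next, suppose toward a contradiction that $\pi$ is not completely non-singular. By definition there is a non-zero closed subspace $\fN\subset\fH$, reducing for $\pi(\fT_d)$, such that the representation induced by $\pi$ on $\fN$ is singular. Let $Q$ denote the orthogonal projection onto $\fN$. The two structural features of $Q$ I would extract are the following. (a) Since $\fN$ reduces $\pi(\fT_d)$, the projection $Q$ commutes with the von Neumann algebra generated by $\pi(\fT_d)$; as $\widehat{\pi}(\fq)$ lies in the weak-$*$ closure of $\pi(\A_d)$, it belongs to that von Neumann algebra, so $Q$ commutes with $\widehat{\pi}(\fq)$. (b) Since the representation induced on $\fN$ is singular, part (ii) of Lemma \ref{L:UnivStructProj} gives $\widehat{\pi}(\fq)Q=Q$.

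The contradiction then falls out by combining these facts with $\widehat{\pi}(\fq)\xi=0$. Indeed, using (b) and then (a) I would compute
\[
Q\xi=\widehat{\pi}(\fq)Q\xi=Q\widehat{\pi}(\fq)\xi=0.
\]
Since $Q$ commutes with $\pi(\fT_d)$, for every $t\in\fT_d$ we have $Q\pi(t)\xi=\pi(t)Q\xi=0$, so $Q$ annihilates $\pi(\fT_d)\xi$; cyclicity of $\xi$ then forces $Q=0$, contradicting $\fN\neq\{0\}$. I do not anticipate a serious obstacle, since Lemma \ref{L:UnivStructProj} does the heavy lifting and reduces the statement to bookkeeping with $\fq$. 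The one point demanding a little care is the commutation relation in step (a): one must justify that $\widehat{\pi}(\fq)$ sits inside the von Neumann algebra generated by $\pi(\fT_d)$ — which follows from Goldstine's theorem and weak-$*$ continuity of $\widehat{\pi}$, exactly as invoked in the proof of Lemma \ref{L:UnivStructProj} — so that the reducing projection $Q$ indeed commutes with it.
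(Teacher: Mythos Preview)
Your proof is correct and follows essentially the same approach as the paper: both translate the hypotheses via Lemma \ref{L:UnivStructProj} into $\widehat\pi(\fq)$ annihilating the cyclic vector and fixing the singular reducing projection, then use cyclicity to force that projection to vanish. Your commutation step (a) is harmless but unnecessary, since $\widehat\pi(\fq)Q=Q$ together with self-adjointness of both operators already yields $Q\widehat\pi(\fq)=Q$ upon taking adjoints; the paper exploits this directly to compute $QP=Q\widehat\pi(\fq)P=0$ (with the roles of $P$ and $Q$ swapped relative to your notation).
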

\begin{proof}
 Let $\fR\subset \fH$ be a closed reducing subspace for $\pi(\fT_d)$, and let $P\in B(\fH)$ be the orthogonal projection onto $\fR$. Assume that the representation induced by $\pi$ on $\fR$ is singular and let $Q\in B(\fH)$ denote the orthogonal projection onto $\ol{\pi(\A_d)\xi}$. It follows from Lemma \ref{L:UnivStructProj} that $P=P\widehat\pi(\fq)$ and $\widehat\pi(\fq)Q=0$. Therefore, $PQ=0$ so in particular $P\xi=0$. Since $P$ is reducing for $\pi(\fT_d)$, we infer that $P\pi(t)\xi=\pi(t)P\xi=0$ for every $t\in \fT_d$, and thus $P=0$ since $\xi$ is cyclic for $\pi(\fT_d)$.
\end{proof}

The previous result has a sort of converse when $d\geq 2$, stating that any completely non-singular representation is a direct sum of cyclic representations inducing absolutely continuous representations. In fact, we show something a bit more precise.

\begin{proposition}\label{P:CNSdirectsum}
Let $d\geq 2$. Let $\pi:\fT_d\to B(\fH)$ be completely non-singular unital $*$-repre\-sentation. Then, there is a set $\{\xi_\lambda:\lambda\in \Lambda\}$ of vectors in $\fH$ with the following properties.
\begin{enumerate}[{\rm (a)}]
\item For each $\lambda\in \Lambda$, we let $\fH_\lambda=\ol{\pi(\fT_d)\xi_\lambda}$ and let $\pi_\lambda$ denote the representation induced by $\pi$ on $\fH_\lambda$.
 Then, $\fH=\oplus_{\lambda\in \Lambda}\fH_\lambda$ and $\pi=\oplus_{\lambda\in \Lambda}\pi_\lambda$.

\item For each $\lambda\in \Lambda$, the representation induced by $\pi_\lambda$ on the invariant subspace $\ol{\pi_\lambda(\A_d)\xi_\lambda}$ is unitarily equivalent to the identity representation of $\fT_d$.
\end{enumerate}
\end{proposition}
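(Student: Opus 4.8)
The plan is to construct the family $\{\xi_\lambda\}$ entirely out of wandering vectors via a maximality argument, with the single nontrivial input being Kennedy's resolution of the wandering vector problem \cite{kennedy2013}. Two preliminary observations drive the argument. First, I would check that complete non-singularity passes to reducing subspaces: if $\fR\subseteq\fH$ is reducing for $\pi(\fT_d)$ and $\fN\subseteq\fR$ is a nonzero reducing subspace for the restriction on which the induced representation is singular, then $\fN$ is automatically reducing for $\pi(\fT_d)$ on all of $\fH$ and induces the very same (singular) representation, contradicting the complete non-singularity of $\pi$. Hence $\pi|_\fR$ is again completely non-singular. Second, a nonzero completely non-singular representation cannot itself be singular, since $\fH$ is a reducing subspace and its induced representation is $\pi$; thus, taking the projection onto $\fH$ to be $I$ in Lemma \ref{L:UnivStructProj}(ii), we get $\widehat\pi(\fq)\neq I$. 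Consequently $\L_\pi$ is not a von Neumann algebra (its structure projection differs from the identity), so Kennedy's wandering vector theorem \cite{kennedy2013} supplies a wandering vector for $\pi$.

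With these in hand, I would apply Zorn's lemma to the collection of families $\{\xi_\lambda\}_{\lambda\in\Lambda}$ of wandering vectors for which the cyclic subspaces $\fH_\lambda=\overline{\pi(\fT_d)\xi_\lambda}$ are pairwise orthogonal, ordered by inclusion. Each such $\fH_\lambda$ is reducing for $\pi(\fT_d)$, being generated by a vector under the $*$-algebra $\pi(\fT_d)$. Chains admit upper bounds given by unions, so a maximal family $\{\xi_\lambda\}_{\lambda\in\Lambda}$ exists, and $\fK:=\bigoplus_{\lambda}\fH_\lambda$ is then reducing. If $\fK\neq\fH$, then $\fK^\perp$ is a nonzero reducing subspace; by the first observation $\pi|_{\fK^\perp}$ is completely non-singular, and by the second it carries a wandering vector $\xi_0\in\fK^\perp$. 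The cyclic subspace $\overline{\pi(\fT_d)\xi_0}$ lies inside $\fK^\perp$ and is therefore orthogonal to every $\fH_\lambda$, so adjoining $\xi_0$ strictly enlarges the family, contradicting maximality. Hence $\fK=\fH$, which yields property (a) upon letting $\pi_\lambda$ be the representation induced by $\pi$ on $\fH_\lambda$.

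Property (b) would then follow immediately: each $\xi_\lambda$ is a wandering vector for $\pi$, and since $\overline{\pi(\A_d)\xi_\lambda}\subseteq\fH_\lambda$ it is also a wandering vector for $\pi_\lambda$, so the remark preceding Example \ref{E:AtomicD} shows that the representation induced by $\pi_\lambda$ on $\overline{\pi_\lambda(\A_d)\xi_\lambda}$ is unitarily equivalent to the identity representation of $\fT_d$.

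The main obstacle is concentrated entirely in the second preliminary observation, namely the production of a wandering vector in a nonzero completely non-singular representation; this is precisely where the deep machinery of \cite{kennedy2013} (and the structure theory of \cite{davidson2001}) is essential, as wandering vectors are otherwise not available by elementary means when $d\geq 2$. Everything else—the hereditary property for reducing subspaces, the Zorn exhaustion, and the identification of the induced representation on each wandering cyclic piece—is routine bookkeeping built on Lemma \ref{L:UnivStructProj} and the definitions.
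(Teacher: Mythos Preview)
Your proposal is correct and follows essentially the same route as the paper: a Zorn's lemma argument over families of wandering vectors with pairwise orthogonal cyclic reducing subspaces, with Kennedy's wandering vector theorem \cite{kennedy2013} (together with the structure theory of \cite{davidson2001}) providing the key nontrivial step on the orthogonal complement. The paper's proof is slightly terser—it invokes \cite[Corollary 4.13]{kennedy2011} and \cite[Theorem 5.1]{kennedy2013} directly on the non-singular restriction rather than routing through $\widehat\pi(\fq)\neq I$ and Lemma \ref{L:UnivStructProj}—but the substance is identical.
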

\begin{proof}
An application of Zorn's lemma yields a maximal family $\F=\{\fH_\lambda:\lambda\in \Lambda\}$ of pairwise orthogonal closed reducing subspaces for $\pi(\fT_d)$ such that the representation induced by $\pi$ on each $\fH_\lambda$ admits a cyclic vector $\xi_\lambda$ which is wandering for $\pi$. Let $\fM=\left(\oplus_{\lambda\in \Lambda}\fH_\lambda \right)^\perp$. If $\fM$ is non-zero, then the representation induced by $\pi$ on $\fM$ is not singular by assumption, so that there is a wandering vector $\eta\in \fM$ for $\pi$ by virtue of \cite[Corollary 4.13]{kennedy2011} and \cite[Theorem 5.1]{kennedy2013} (this is where we require $d\geq 2$). By maximality, this forces $\ol{\pi(\fT_d)\eta}\in \F$, which is absurd. Hence $\fM$ is zero and we find $\pi=\oplus_{\lambda}\pi_\lambda$. Finally, since $\xi_\lambda$ is wandering for $\pi$, it immediately follows that the representation induced by $\pi_\lambda$ on $\ol{\pi_\lambda(\A_d)\xi_\lambda}$ is unitarily equivalent to the identity representation of $\fT_d$.
\end{proof}

We note that the previous proposition fails when $d=1$. Indeed, there are absolutely continuous $*$-representation of $\fT_1$ with no induced representations on invariant subspaces that are unitarily equivalent to the identity representation of $\fT_1$; see for instance \cite[Example 2.2]{kennedy2013}.

\subsection{Riesz representations}\label{SS:Rieszrep}
We now get back to extending the classical F. and M. Riesz theorem. The next lemma is our main technical tool.

\begin{lemma}
	Let $\pi:\fT_d\to B(\fH)$ be a unital $*$-representation. 
	Suppose there exist a vector $\xi\in\fH$ along with $f\in\fT_d^{**}$ such that $\wh{\pi}(f^*f)\xi=\xi$ and $\wh{\pi}(f)\xi\in \left(\pi(\A_d)\xi\right)^\perp$. If we let $\fM=\ol{\pi(\A_d)\xi}$, then the following statements hold.
	\begin{enumerate}[{\rm (i)}]
	\item The representation induced by $\pi$ on $\fM$ is unitarily equivalent to the identity representation of $\fT_d$.	
	
	\item We have $\widehat\pi(\fq)\xi=0$.
	
	\item The map
	$s\mapsto P_{\fM}\pi(s)|_{\fM}$ on $\S_d$ admits a weak-$*$ continuous extension to $\V_d$.
	\end{enumerate}
	\label{L:Main}
\end{lemma}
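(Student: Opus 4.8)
The plan is to prove (i) first---this is the substantive assertion---and then obtain (ii) and (iii) as formal consequences. Write $\eta=\widehat\pi(f)\xi$. The relation $\widehat\pi(f^*f)\xi=\xi$ gives $\|\eta\|^2=\langle\widehat\pi(f^*f)\xi,\xi\rangle=\|\xi\|^2$, and the hypothesis $\widehat\pi(f)\xi\in(\pi(\A_d)\xi)^\perp$ says exactly that $\eta\perp\fM$. We may assume $\xi\neq0$, since otherwise $\fM=\{0\}$ and all three statements are vacuous.

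For (i), set $V_j=\pi(L_j)|_\fM=\rho(L_j)$; as $\fM$ is invariant for $\pi(\A_d)$, each $V_j$ is an isometry of $\fM$ and $(V_1,\dots,V_d)$ is the row isometry of the induced representation $\rho$. First I would identify the wandering subspace $\fW=\fM\ominus\ol{\sum_j V_j\fM}$. Since $\ol{\sum_j V_j\fM}=\ol{\pi(\A^0_d)\xi}$ with $\A^0_d=\ol{\spn}\{L_u:|u|\ge1\}=\ker\varepsilon$ (here $\varepsilon:\A_d\to\bC$ is the character $L_j\mapsto0$), and since $\A_d=\bC I\oplus\A^0_d$ yields $\fM=\ol{\pi(\A^0_d)\xi}+\bC\xi$, the subspace $\fW$ is automatically at most one-dimensional. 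By the Wold decomposition for row isometries \cite{popescu1989}, it then suffices to prove that $V$ has no Cuntz summand, i.e. that the projection $P_\infty$ onto the largest reducing subspace for $\rho(\fT_d)$ on which $\sum_j V_jV_j^*=I$ vanishes. Since $P_\infty$ commutes with $\rho(\A_d)$ and $\xi$ is cyclic for $\rho(\A_d)$ in $\fM$, this is equivalent to $P_\infty\xi=0$, that is, to $\sum_{|w|=n}\|V_w^*\xi\|^2\to0$ as $n\to\infty$. Granting this, $\fW$ is exactly one-dimensional (because $\xi\neq0$), and $V$ is unitarily equivalent to the left regular row isometry; in other words $\rho$ is unitarily equivalent to the identity representation of $\fT_d$, which is (i).

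Establishing $P_\infty\xi=0$ is the crux, and it is precisely here that analyticity must be used. It is tempting to argue only through absolute continuity, but this cannot suffice: Lemma \ref{L:UnivStructProj} shows that the universal structure projection detects the singular part only, and indeed an absolutely continuous row isometry may well be Cuntz---already for $d=1$ the bilateral shift is absolutely continuous yet has no pure part. The orthogonality $\eta\perp\fM$ is therefore indispensable. Concretely, I would compute $\langle P_\infty\xi,\xi\rangle=\lim_n\sum_{|w|=n}\|V_w^*\xi\|^2$ and estimate the tail using that $V_w^*\xi=P_\fM\pi(L_w)^*\xi$ together with $\xi=\widehat\pi(f^*)\eta$ and $\eta\perp\fM$. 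The guiding mechanism is the classical one: when $d=1$ and $\tau$ is integration against $r\,d\lambda$ with $r\in H^1$, the GNS data are $\xi=|r|^{1/2}$ and $\eta=\overline{r}/|r|^{1/2}$, and the required vanishing is equivalent to $\ol{\spn}\{z^n|r|^{1/2}\}$ being simply invariant---which holds because $r\in H^1$ forces $\log|r|\in L^1$. Carrying out the free analogue of this log-integrability argument (a noncommutative Szego-type statement) is the main technical hurdle.

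Granting (i), the remaining assertions are short. For (ii), $\rho$ is unitarily equivalent to the identity representation of $\fT_d$, which is absolutely continuous; since absolute continuity is preserved under unitary equivalence, $\rho$ is absolutely continuous, so Lemma \ref{L:UnivStructProj}(i) (applied with $P=P_\fM$) gives $\widehat\pi(\fq)P_\fM=0$, whence $\widehat\pi(\fq)\xi=\widehat\pi(\fq)P_\fM\xi=0$. For (iii), let $U:\fF^2_d\to\fM$ be a unitary implementing the equivalence in (i), so that $\rho(t)=UtU^*$ for $t\in\fT_d$. For $a,b\in\A_d$ the compression satisfies $P_\fM\pi(a+b^*)|_\fM=\rho(a)+\rho(b)^*=U(a+b^*)U^*$, so the weak-$*$ continuous complete isometry $v\mapsto UvU^*$ on $\V_d$ agrees on $\S_d$ with $s\mapsto P_\fM\pi(s)|_\fM$; as $\V_d$ is the weak-$*$ closure of $\S_d$, it is the desired extension. (Alternatively, one may invoke Theorem \ref{T:rigidKennedy} when $d\ge2$ and Theorem \ref{T:rigidity} when $d=1$.)
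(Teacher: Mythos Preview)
Your derivations of (ii) and (iii) from (i) are fine, and you have correctly isolated the heart of the matter: after the Wold decomposition of the row isometry $V=(\rho(L_1),\dots,\rho(L_d))$ on $\fM$, one must show that the Cuntz summand $\fK$ vanishes. However, you do not actually prove this. You reduce it to the assertion $\sum_{|w|=n}\|V_w^*\xi\|^2\to 0$ and then invoke a ``free analogue of log-integrability'' as a hurdle still to be overcome. That is a genuine gap, and no such Szeg\H{o}-type argument is needed.

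The idea you are missing is that $\fK$ is not merely reducing for the induced representation $\rho$ on $\fM$, but is in fact reducing for the \emph{ambient} representation $\pi$ on all of $\fH$. Indeed, the compression $\phi(t)=P_\fK\pi(t)|_\fK$ agrees with $\sigma(q(a))$ for $a\in\A_d$, so $\phi(L_1),\dots,\phi(L_d)$ are isometries satisfying the Cuntz relation; the Schwarz inequality then forces $\fT_d$ into the multiplicative domain of $\phi$, so $\phi$ is a $*$-homomorphism, and by Agler's observation (see \cite{agler1982} or \cite[Lemma 3.2]{CH2018}) $\fK$ is reducing for $\pi(\fT_d)$. Consequently the projection $P$ onto $\fK$ commutes with $\widehat\pi(\fT_d^{**})$, and in particular with $\widehat\pi(f^*)$. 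Now your own factorization $\xi=\widehat\pi(f^*)\eta$ together with $\eta\perp\fM\supset\fK$ gives
\[
P\pi(a)\xi=P\pi(a)\widehat\pi(f^*f)\xi=\widehat\pi(af^*)P\widehat\pi(f)\xi=\widehat\pi(af^*)P\eta=0
\]
for every $a\in\A_d$, so $P|_\fM=0$ and $\fK=\{0\}$. The remaining step, that the wandering multiplicity $\kappa$ equals $1$, follows from the cyclicity of $\xi$ for $\rho(\A_d)$ as you note (or via \cite[Corollary 1.10]{popescu2006}).
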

\begin{proof}

Let $\rho$ denote the representation induced  by $\pi$ on $\fM$. By virtue of the Wold decomposition \cite[Theorem 1.3]{popescu1989}, up to unitary equivalence we know that $\fM=\fF^{2(\kappa)}_d\oplus \fK$ and that $\rho=\id^{(\kappa)}\oplus (\sigma \circ q)$ for some cardinal $\kappa$ and some unital $*$-representation $\sigma:\O_d\to B(\fK)$. Let $P\in B(\fH)$ denote the orthogonal projection onto $\fK$. We claim that $P$ commutes with $\pi(\fT_d)$.	To see this, consider the unital completely positive map $\phi:\fT_d\to B(\fK)$ defined as
	\[
	\phi(t)=P_{\fK}\pi(t)|_{\fK}, \quad t\in \fT_d.
	\]
For $a\in \A_d$, we see that $\phi(a)=\sigma(q(a)).$ In particular, we see that  $\phi(L_1),\ldots,\phi(L_d)$ are isometries satisfying the Cuntz relation. It is then well known that  $\phi$ must be a $*$-homomorphism; indeed, it follows from the Schwarz inequality that $\fT_d$ lies in the multiplicative domain of $\phi$ \cite[Theorem 3.18]{paulsen2002} . In turn, a classical observation of Agler \cite{agler1982} (see also \cite[Lemma 3.2]{CH2018}) reveals that $\fK$ is reducing for $\pi(\fT_d)$, so that indeed $P$ commutes with $\pi(\fT_d)$. 
	It follows that
	\[ \wh{\pi}(f)^*P=P\wh{\pi}(f)^* \]
	and, for every $a\in\A_d$, that
	\[ P\pi(a)\xi=P\pi(a)\wh{\pi}(f^*f)\xi=\wh{\pi}(af^*)P\wh{\pi}(f)\xi=0 \]
	where the last equality is a consequence of our assumption that $\wh{\pi}(f)\xi\in \fM^\perp$. This implies that $P\fM=\{0\}$, whence  $\fK=\{0\}$. Thus,
	\[ \pi(a)|_{\fM}=\rho(a)=a^{(\kappa)}, \quad a\in \A_d.\]
	Because $\xi$ is a cyclic vector for $\rho(\A_d)$, we infer from \cite[Corollary 1.10]{popescu2006} that $\kappa=1$, so that (i) holds. Lemma \ref{L:UnivStructProj} then implies that (ii) is verified.
	Moreover, 
	\[
	P_{\fM}\pi(s)|_\fM=s, \quad s\in \S_d
	\]
	which immediately implies (iii). 	
\end{proof}

Let $\tau:\O_d\to \bC$ be a bounded linear functional. Let $\pi:\O_d\to B(\fH)$ be a unital $*$-representation and let $\xi\in \fH$ be a cyclic vector for $\pi$ with $\|\xi\|=\|\tau\|^{1/2}$. Assume that the representation induced  by $\pi$ on the invariant subspace $\ol{\pi(q(\A_d))\xi}$ is unitarily equivalent to the identity representation of $\fT_d$. If $\eta\in\fH$ is a vector such that $\|\eta\|=\|\tau\|^{1/2}$ and with the property that
\[
\tau(t)=\langle \pi(t)\xi,\eta\rangle, \quad t\in \O_d
\]
then we say that the triple $(\pi,\xi,\eta)$ is a \emph{Riesz representation} for $\tau$ on $\fH$. The main result of this section is the following, which says that an analytic functional on the Cuntz algebra always admits an essentially unique Riesz representation $(\pi,\xi,\eta)$, and in addition $\pi$ is completely non-singular.

\begin{theorem}
Let $\tau:\O_d\to \bC$ be a bounded linear functional with the property that $q(\A_d)\subset \ker \tau$. Then, $\tau$ admits a completely non-singular Riesz representation $(\pi,\xi,\eta)$
such that
\[
|\tau|(t)=\langle \pi(t)\xi,\xi\rangle, \quad t\in \O_d.
\]
Furthermore, this Riesz representation enjoys the following uniqueness property: if $\pi':\O_d\to B(\fH')$ is a unital $*$-representation and $\xi',\eta'\in \fH'$ are vectors with $\|\xi'\|=\|\eta'\|=\|\tau\|^{1/2}$ such that $\xi'$ is cyclic for $\pi'$ and
\[
\tau(t)=\langle \pi'(t)\xi',\eta'\rangle, \quad t\in \O_d,
\]
then there is a unitary operator $U:\fH\to\fH'$ such that $U\xi=\xi', U\eta=\eta'$ and
\[
U\pi(t)U^*=\pi'(t), \quad t\in \O_d.
\]
	\label{T:MCEFMR}
\end{theorem}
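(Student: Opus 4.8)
The plan is to first manufacture the Riesz representation directly from the polar decomposition of $\tau$, then to upgrade it to a completely non-singular one, and finally to read off uniqueness from the abstract GNS-type rigidity already recorded in Lemma \ref{L:Subrep}.

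First I would apply Lemma \ref{L:PolarForm} to $\tau$, viewed as a functional on the $\Cstar$-algebra $\O_d$. This produces a unital $*$-representation $\pi:\O_d\to B(\fH)$, a cyclic vector $\xi\in\fH$ with $\|\xi\|=\|\tau\|^{1/2}$, and a partial isometry $f\in\O_d^{**}$ with $\wh\pi(f^*f)\xi=\xi$ such that, upon setting $\eta=\wh\pi(f)\xi$, one has $\tau(t)=\langle\pi(t)\xi,\eta\rangle$ and $|\tau|(t)=\langle\pi(t)\xi,\xi\rangle$ for all $t\in\O_d$. The identity $\wh\pi(f^*f)\xi=\xi$ forces $\|\eta\|=\|\tau\|^{1/2}$, so the only missing ingredient for $(\pi,\xi,\eta)$ to be a Riesz representation is the unitary equivalence of the representation induced on $\fM=\ol{\pi(q(\A_d))\xi}$ with the identity representation of $\fT_d$. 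This is where the hypothesis $q(\A_d)\subset\ker\tau$ enters: for $a\in\A_d$ we obtain $0=\tau(q(a))=\langle\pi(q(a))\xi,\eta\rangle$, so that $\eta\in(\pi(q(\A_d))\xi)^\perp$.

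The crucial step is to feed this data into Lemma \ref{L:Main}, which is phrased for representations of $\fT_d$ rather than $\O_d$. I would therefore pass to $\td\pi=\pi\of q:\fT_d\to B(\fH)$ and exploit that the quotient map $q:\fT_d\to\O_d$ has surjective bidual $q^{**}:\fT_d^{**}\to\O_d^{**}$ (a normal $*$-homomorphism whose range is weak-$*$ closed and contains the weak-$*$ dense copy of $\O_d$). Choosing a lift $\td f\in\fT_d^{**}$ with $q^{**}(\td f)=f$ and using $\wh{\td\pi}=\wh\pi\of q^{**}$ together with the fact that $q^{**}$ is a $*$-homomorphism, one verifies $\wh{\td\pi}(\td f^*\td f)\xi=\wh\pi(f^*f)\xi=\xi$ and $\wh{\td\pi}(\td f)\xi=\eta\in(\td\pi(\A_d)\xi)^\perp$; note that $\td f$ need not remain a partial isometry, but Lemma \ref{L:Main} does not require this. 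Lemma \ref{L:Main}(i) then yields that the representation induced by $\td\pi$ on $\fM$ is unitarily equivalent to the identity representation of $\fT_d$, which is exactly the condition certifying that $(\pi,\xi,\eta)$ is a Riesz representation for $\tau$ with $|\tau|(t)=\langle\pi(t)\xi,\xi\rangle$. I expect this passage between $\O_d^{**}$ and $\fT_d^{**}$ to be the only genuinely delicate point of the argument; everything else is bookkeeping.

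It then remains to establish complete non-singularity and uniqueness. Since $q$ is surjective, $\xi$ is cyclic for $\td\pi$ as well, and the induced representation on $\fM$, being unitarily equivalent to the identity representation of $\fT_d$, is absolutely continuous (the inclusion $\L_d\hookrightarrow B(\fF^2_d)$ serves as the required weak-$*$ continuous extension). Proposition \ref{P:CNSrestriction} therefore shows that $\td\pi=\pi\of q$ is completely non-singular, which by our convention means $\pi$ is completely non-singular. Finally, the uniqueness statement is immediate from Lemma \ref{L:Subrep} applied with $\fT=\O_d$: the two triples $(\pi,\xi,\eta)$ and $(\pi',\xi',\eta')$ both have a cyclic first vector of norm $\|\tau\|^{1/2}$, a second vector of norm $\|\tau\|^{1/2}$, and reproduce the same functional $\tau$, so the lemma supplies the desired intertwining unitary $U:\fH\to\fH'$ with $U\xi=\xi'$, $U\eta=\eta'$ and $U\pi(t)U^*=\pi'(t)$ for all $t\in\O_d$.
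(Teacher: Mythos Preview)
Your proposal is correct and follows the same approach as the paper: apply Lemma~\ref{L:PolarForm} to produce $(\pi,\xi,\eta)$, use the hypothesis to place $\eta$ in $(\pi(q(\A_d))\xi)^\perp$, invoke Lemma~\ref{L:Main} for the Riesz representation property, then Proposition~\ref{P:CNSrestriction} for complete non-singularity and Lemma~\ref{L:Subrep} for uniqueness. The only difference is that you are more explicit than the paper about the passage from $\O_d$ to $\fT_d$ needed to literally match the hypotheses of Lemma~\ref{L:Main}; the paper simply writes ``Applying Lemma~\ref{L:Main}'' and leaves this routine lift implicit, whereas you spell out the surjectivity of $q^{**}$ and the fact that Lemma~\ref{L:Main} does not require $f$ to be a partial isometry.
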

\begin{proof}
	By Lemma \ref{L:PolarForm}, 	there exist a unital $*$-representation $\pi:\O_d\to B(\fH)$, a vector $\xi\in\fH$ with $\|\xi\|=\|\tau\|^{1/2}$ that is cyclic for $\pi$ and a partial isometry $f\in\O_d^{**}$ such that  $\wh{\pi}(f^*f)\xi=\xi$ and
	\[ \tau(t)=\ip{\pi(t)\xi,\wh{\pi}(f)\xi}, \qand |\tau|(t)=\ip{\pi(t)\xi,\xi}\]
	for every $t\in \O_d$.
 Now, the assumption that $q(\A_d)\subset \ker \tau$ implies that $\wh{\pi}(f)\xi\in (\pi(q(\A_d))\xi)^\perp$. 
	Applying Lemma \ref{L:Main}, we see that $(\pi,\xi,\eta)$ is a Riesz representation for $\tau$ on $\fH$. In turn, Proposition \ref{P:CNSrestriction} shows that $\pi$ is completely non-singular.
	
	The uniqueness statement follows at once from Lemma \ref{L:Subrep}.
%
\end{proof}

We reiterate that the previous theorem can be viewed as a genuine generalization of the classical F. and M. Riesz theorem, through the lens of Corollary \ref{C:classFM1}. The reader will notice however that in Corollary \ref{C:classFM1}, a single unitary equivalence class of  representations of $\O_1$ serves as a Riesz representation for all analytic functionals.  One might wonder whether there is such a distingushed unitary  equivalence class of representations when $d>1$. We show next that this is far from being true.

\begin{corollary}\label{C:NoOneRep}
Let $d\geq 2$. Let $\pi:\O_d\to B(\fH)$ be an irreducible, completely non-singular, unital $*$-representation. Then, there are unit vectors $\xi,\eta$ in the range of $\widehat{\pi\circ q}(I-\fq)$ such that $(\pi,\xi,\eta)$ is a Riesz representation for some bounded linear functional on $\O_d$ annihilating $q(\A_d)$. In particular,  there exist bounded linear functionals on $\O_d$ annihilating $q(\A_d)$ with unitarily inequivalent Riesz representations.
\end{corollary}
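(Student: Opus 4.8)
The plan is to build the required functional by hand from a wandering vector. Write $\sigma=\pi\circ q\colon\fT_d\to B(\fH)$, a unital $*$-representation of the Cuntz--Toeplitz algebra. Since $q$ is surjective and $\pi$ is irreducible, $\sigma$ is irreducible, and it is completely non-singular by hypothesis, hence in particular not singular. Applying Proposition \ref{P:CNSdirectsum} to $\sigma$ (the indexing family reducing to a single element because $\sigma$ is irreducible) produces a wandering vector $\zeta\in\fH$ for $\sigma$ such that the representation induced by $\sigma$ on $\ol{\sigma(\A_d)\zeta}$ is unitarily equivalent to the identity representation of $\fT_d$ on $\fF^2_d$. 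Let $W\colon\fF^2_d\to\ol{\sigma(\A_d)\zeta}$ be the implementing unitary, normalized so that $We_w=\sigma(L_w)\zeta$. Using $d\geq 2$, I would then set
\[
\xi=\sigma(L_1)\zeta=We_1 \qand \eta=\sigma(L_2)\zeta=We_2 .
\]
Both are unit vectors, since $\zeta$ is wandering and $\sigma(L_1),\sigma(L_2)$ are isometries, and both are themselves wandering for $\sigma$.

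The key structural point is that $\ol{\A_d e_1}$ and $\ol{\A_d e_2}$ are orthogonal in $\fF^2_d$: the first is the closed span of the basis vectors $e_w$ whose word $w$ ends in the letter $1$, the second those ending in $2$. Transporting this through $W$ gives $\ol{\sigma(\A_d)\eta}\perp\ol{\sigma(\A_d)\xi}$, so in particular $\eta\perp\fM$, where $\fM=\ol{\pi(q(\A_d))\xi}$. Hence $\tau(t)=\ip{\pi(t)\xi,\eta}$, $t\in\O_d$, annihilates $q(\A_d)$. Because $\xi$ and $\eta$ are wandering, the representations induced on $\ol{\sigma(\A_d)\xi}$ and $\ol{\sigma(\A_d)\eta}$ are absolutely continuous, so Lemma \ref{L:UnivStructProj}(i) places $\xi,\eta\in\ker\wh{\sigma}(\fq)=\ran\widehat{\pi\circ q}(I-\fq)$. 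Moreover $\xi$ is cyclic for $\pi$ (every nonzero vector is, by irreducibility), and the representation induced on $\fM$ is unitarily equivalent to the identity representation (as $\xi$ is wandering). Thus $(\pi,\xi,\eta)$ will be a Riesz representation for $\tau$ once the normalization $\|\xi\|=\|\eta\|=\|\tau\|^{1/2}$, i.e.\ $\|\tau\|=1$, is checked. This norm computation is the one delicate point and, to my mind, the main obstacle: the bound $\|\tau\|\leq 1$ is immediate, while for the reverse I would exploit irreducibility. Since $\pi(\O_d)''=B(\fH)$, the Kaplansky density theorem makes the unit ball of $\pi(\O_d)$ weak-operator dense in that of $B(\fH)$; as $T\mapsto\ip{T\xi,\eta}$ is weak-operator continuous and attains $\|\xi\|\|\eta\|=1$ on a rank-one contraction, we obtain $\|\tau\|=1$. (Alternatively, $\O_d$ is simple, so $\pi$ is isometric, and the same conclusion follows.)

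Finally, for the last assertion I would feed the atomic representations of Example \ref{E:AtomicD} into the construction. Fix $\upsilon$ so that the associated word $u$ is primitive; then for each unimodular $\lambda$ the representation $\sigma_{\upsilon,\lambda}\colon\O_d\to B(\fH)$ is irreducible and of dilation type, hence completely non-singular. By the first part, each $\sigma_{\upsilon,\lambda}$ occurs as the representation of a Riesz representation for some functional $\tau_\lambda$ annihilating $q(\A_d)$. Since distinct values of $\lambda$ give unitarily inequivalent $\sigma_{\upsilon,\lambda}$, the corresponding Riesz representations are pairwise unitarily inequivalent, and by the essential uniqueness in Theorem \ref{T:MCEFMR} the functionals $\tau_\lambda$ are distinct as well. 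This exhibits a continuum of analytic functionals on $\O_d$ with pairwise inequivalent Riesz representations, in sharp contrast with the univariate situation of Corollary \ref{C:classFM1}.
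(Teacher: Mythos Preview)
Your argument is correct and follows essentially the same strategy as the paper: locate a wandering vector, build $\xi$ and $\eta$ from it, and verify the Riesz representation conditions. The paper's construction is marginally leaner---it takes a wandering vector $\eta$ and sets $\xi=\pi(q(L_1))\eta$, whereas you take a wandering $\zeta$ and put $\xi=\sigma(L_1)\zeta$, $\eta=\sigma(L_2)\zeta$---but the mechanism is the same.

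One remark: the norm computation you flag as ``the one delicate point'' and ``the main obstacle'' is in fact immediate. In your own setup, simply evaluate at $q(L_2L_1^*)$:
\[
\tau(q(L_2L_1^*))=\ip{\sigma(L_2L_1^*)\sigma(L_1)\zeta,\sigma(L_2)\zeta}=\ip{\sigma(L_2)\zeta,\sigma(L_2)\zeta}=1,
\]
so $\|\tau\|=1$ with no appeal to Kaplansky density or simplicity of $\O_d$. (The paper, with its choice $\xi=\pi(q(L_1))\eta$, does the analogous thing with $q(L_1^*)$.) Your Kaplansky argument is not wrong, but note that it implicitly requires $\pi$ to be isometric to pass from the unit ball of $\pi(\O_d)$ back to the unit ball of $\O_d$; you present simplicity as an ``alternative'' when it is really what makes the Kaplansky route go through.
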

\begin{proof}
First note that $\pi$ is not singular, so that $\widehat{\pi\circ q}(\fq)\neq I$ by Lemma \ref{L:UnivStructProj}. By property (a) of  the universal structure projection $\fq$ (see Subsection \ref{SS:ncda}), we infer that the space $\fK=\widehat{\pi\circ q}(I-\fq)\fH$ is invariant for $(\pi\circ q)(\A_d)$. The corresponding representation induced  by $\pi$ is absolutely continuous by Lemma \ref{L:UnivStructProj}.  Because $d\geq 2$, by combining  \cite[Theorem 4.12]{kennedy2011} with \cite[Theorem 4.16]{kennedy2013} we see that there exists a vector $\eta\in \fK$ which is wandering for $\pi$. Let $\xi=\pi(q(L_1))\eta\in \fK$, which is still a wandering vector for $\pi$. Since we assume that $\pi$ is irreducible, $\xi$ is necessarily cyclic for $\pi$. Define $\tau:\O_d\to\bC$ as
\[
\tau(t)=\langle \pi(t)\xi,\eta\rangle, \quad t\in \O_d.
\]
Then, we find 
\[
\tau(q(L_1^*))=\langle \pi(q(L_1^*L_1))\eta,\eta \rangle=\|\eta\|^2=1
\]
and thus $\|\tau\|=1$.  The fact that $\xi$ is also wandering for $\pi$ then implies that $(\pi,\xi,\eta)$ is a Riesz representation for $\tau$. Moreover, for $a\in \A_d$ we have
\[
\tau(q(a))=\langle \pi(q(aL_1))\eta,\eta\rangle=0
\]
since $\eta$ was chosen to be wandering for $\pi$, whence $q(\fA_d)\subset \ker \tau$ as desired. The first statement has been established, and the second statement then immediately follows from it along with the uniqueness statement of Theorem \ref{T:MCEFMR}, since there exist unitarily inequivalent completely non-singular $*$-representations of $\O_d$ (see for instance Example \ref{E:AtomicD}).
\end{proof}

\section{Weak-$*$ continuous extensions of analytic functionals}\label{S:FMRnAC} 
In this section, we seek to give another generalization of the classical F. and M. Riesz theorem, in the form of Corollary \ref{C:classFM2}. In other words, we explore the existence of weak-$*$ continuous extensions for analytic functionals.  Recall that $\S_d\subset B(\fF^2_d)$ denotes the norm closed operator system generated by $\A_d$, while $\V_d\subset B(\fF^2_d)$ denotes the weak-$*$ closure of $\S_d$. We start with a positive result concerning absolute values.

\begin{theorem}
	Let $\tau:\fT_d\to \bC$ be a bounded linear functional such that $\A_d\subset \ker \tau$. 
	Then, the restriction of $|\tau|$ to $\S_d$ extends weak-$*$ continuously to $\V_d$.
	\label{T:TdAbsValAC}
\end{theorem}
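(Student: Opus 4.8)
The plan is to reduce the statement to the technical Lemma~\ref{L:Main} by feeding it the polar decomposition data supplied by Lemma~\ref{L:PolarForm}. Since $\fT_d$ is a unital $\Cstar$-algebra, Lemma~\ref{L:PolarForm} furnishes a unital $*$-representation $\pi:\fT_d\to B(\fH)$, a cyclic vector $\xi\in\fH$ with $\|\xi\|=\|\tau\|^{1/2}$, and a partial isometry $f\in\fT_d^{**}$ satisfying $\wh\pi(f^*f)\xi=\xi$ such that
\[
\wh\tau(w)=\ip{\wh\pi(w)\xi,\wh\pi(f)\xi} \qand |\tau|(t)=\ip{\pi(t)\xi,\xi}
\]
for all $w\in\fT_d^{**}$ and $t\in\fT_d$. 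The crucial observation is that the hypothesis $\A_d\subset\ker\tau$ translates immediately into an orthogonality condition: for every $a\in\A_d$ we have $\ip{\pi(a)\xi,\wh\pi(f)\xi}=\tau(a)=0$, so that $\wh\pi(f)\xi\in(\pi(\A_d)\xi)^\perp$. This is exactly the input required to apply Lemma~\ref{L:Main}.

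With $\fM=\ol{\pi(\A_d)\xi}$, Lemma~\ref{L:Main}(iii) then yields a weak-$*$ continuous map $\Phi:\V_d\to B(\fM)$ extending $s\mapsto P_\fM\pi(s)|_\fM$ on $\S_d$. To finish, I would compose $\Phi$ with the vector functional at $\xi$. The point is that $\xi\in\fM$, since $\A_d$ is unital and hence $\xi=\pi(I)\xi\in\pi(\A_d)\xi$; consequently $P_\fM\xi=\xi$, and for each $s\in\S_d$
\[
\ip{P_\fM\pi(s)|_\fM\,\xi,\xi}=\ip{\pi(s)\xi,P_\fM\xi}=\ip{\pi(s)\xi,\xi}=|\tau|(s).
\]
Therefore the functional $v\mapsto\ip{\Phi(v)\xi,\xi}$ on $\V_d$ is weak-$*$ continuous, being the composition of the weak-$*$ continuous map $\Phi$ with the normal vector functional $T\mapsto\ip{T\xi,\xi}$ on $B(\fM)$, and it restricts to $|\tau|$ on $\S_d$. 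This produces the desired extension.

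Given the preparatory lemmas, the argument is essentially mechanical, so there is no serious obstacle once the right objects are identified. The main conceptual step is recognizing that the annihilation hypothesis is precisely the orthogonality condition demanded by Lemma~\ref{L:Main}, and that the absolute value $|\tau|$, though a priori only defined through the GNS vector $\xi$, can be realized as a vector state of the compression $P_\fM\pi(\cdot)|_\fM$ because $\xi$ already lies in $\fM$. The only point requiring a line of care is the final assertion that composing a weak-$*$ continuous operator-valued extension with a fixed vector functional preserves weak-$*$ continuity, which is standard.
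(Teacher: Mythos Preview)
Your proof is correct and follows essentially the same route as the paper's: apply Lemma~\ref{L:PolarForm} to obtain $(\pi,\xi,f)$, use $\A_d\subset\ker\tau$ to verify the orthogonality hypothesis of Lemma~\ref{L:Main}, and then read off the weak-$*$ continuous extension of $|\tau||_{\S_d}$ from part~(iii) together with the observation that $\xi\in\fM$. Your write-up is in fact slightly more explicit than the paper's about why $\xi\in\fM$ and why the composition with the vector functional preserves weak-$*$ continuity, but the argument is the same.
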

\begin{proof}
By Lemma \ref{L:PolarForm}, there exist a unital $*$-representation $\pi:\fT_d\to B(\fH)$, a vector $\xi\in\fH$ with $\|\xi\|=\|\tau\|^{1/2}$ that is cyclic for $\pi$ and a partial isometry $f\in\fT_d^{**}$ such that  $\wh{\pi}(f^*f)\xi=\xi$,
		\[ \tau(t)=\ip{\pi(t)\xi,\wh{\pi}(f)\xi} \qand  |\tau| (t)=\ip{\pi(t)\xi,\xi}\]
		for every $t\in \fT_d$.  Now, the fact that $\A_d\subset \ker \tau$ implies that $\wh{\pi}(f)\xi\in (\pi(\A_d)\xi)^\perp$. 
Invoke Lemma \ref{L:Main} to see that 
\[
s\mapsto P_{\fM}\pi(s)|_{\fM}, \quad s\in \S_d
\]
 extends weak$*$-continuously to $\V_d$. Finally, observe that
	\[ |\tau|(s)=\ip{P_{\fM}\pi(s)P_{\fM}\xi,\xi}, \quad s\in\S_d, \]
	which clearly implies the desired statement.
\end{proof}

The previous result already shows that there is some amount of automatic weak-$*$ continuity associated with an analytic functional, at least as far as the absolute value is concerned. It is natural to wonder whether the functional itself enjoys such a property as well. In the classical case of measures, this is an immediate consequence of the Radon--Nikodym theorem, but such machinery is not available when working in $\S_d$ for $d>1$. The next example illustrates, perhaps surprisingly, that general analytic functionals on $\S_d$ need  not extend weak-$*$ continuously to $\V_d$, even though their absolute values do.

\begin{example}\label{E:NoACForFree}
Let $d\geq 2$. Let $\sigma:\O_d\to B(\fH)$ denote the unital $*$-representation constructed in Example \ref{E:AtomicD} with $\lambda=1,m=1$ and $\upsilon(1)=1$. Then, $\sigma$ is irreducible and of dilation type.
There is a unit vector $\eta\in \fH$ with the property that $\sigma(q(L_1))\eta=\eta$ and a wandering vector $\xi\in \fH$ for $\sigma$ such that $\eta=\sigma(q(L_2))^*\xi$. Define $\tau:\O_d\to \bC$ as
\[
\tau(t)=\langle \sigma(t)\xi,\eta\rangle, \quad t\in \fT_d.
\]
For $a\in \A_d$, we find
\[
\tau(q(a))=\langle \sigma(q(L_2a))\xi,\xi \rangle=0
\]
since $\xi$ is wandering. Hence, $\tau\circ q$ is an analytic functional, and we claim that $\tau\circ q|_{\S_d}$ does not extend weak-$*$ continuously to $\V_d$. 

To see this, for each $n\in \bN$ we  let $a_n=L_2 L_1^n\in \A_d$. Then, it is readily verified that $(a_n)$ converges to $0$ in the weak-$*$ topology of $B(\fF^2_d)$. On the other hand, we compute for every $n\in \bN$ that
\begin{align*}
(\tau \circ q)(a_n^*)&=\langle \sigma(q(L_1^{*n}L_2^*))\xi,\eta \rangle=\langle \sigma(q(L_1))^{*n} \eta,\eta\rangle=\|\eta\|^2=1
\end{align*}
so that $((\tau\circ q)(a_n^*))$ does not converge to $0$. This proves the claim.
\qed
\end{example}

Fix $d\geq 2$ along with a functional $\tau$ on $\S_d$ that annihilates $\A_d$. In the previous example, we saw that it is possible for $\tau$ to have a Riesz representation of dilation type and to fail to extend weak-$*$ continuously to $\V_d$. By Proposition \ref{P:CNSrestriction}, we know that a Riesz representation must necessarily be completely non-singular. Furthermore, if the Riesz representation of $\tau$ is absolutely continuous, then necessarily $\tau$ extends weak-$*$ continuously to $\V_d$ by \cite[Theorem 4.16]{kennedy2013} and Theorem \ref{T:rigidKennedy}.  It is therefore tempting to conjecture that the converse should hold as well, namely that if $\tau$ extends weak-$*$ continuously to $\V_d$, then its Riesz representation must be absolutely continuous. This is not the case however, as we show next.

\begin{corollary}\label{C:dilationextends}
Let $d\geq 2$ and let $\pi:\O_d\to B(\fH)$ be a completely non-singular, irreducible, unital $*$-representation. Then, there is a bounded linear functional $\tau:\O_d\to \bC$ with the following properties.
\begin{enumerate}[{\rm (a)}]
\item There are unit vectors $\xi,\eta\in \fH$ such that $(\pi,\xi,\eta)$ is a Riesz representation for $\tau$ on $\fH$.
\item The functional $\tau\circ q$ annihilates $\A_d$.
\item The restriction of $\tau\circ q$ to $\S_d$ extends weak-$*$ continuously to $\V_d$.
\end{enumerate}
\end{corollary}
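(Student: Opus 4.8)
The plan is to reuse the construction from Corollary \ref{C:NoOneRep} essentially verbatim and then verify only the additional property (c) by appealing to Lemma \ref{L:extVd}. First I would invoke Corollary \ref{C:NoOneRep} to obtain unit vectors $\xi,\eta$ lying in the range of $\widehat{\pi\circ q}(I-\fq)$ such that $(\pi,\xi,\eta)$ is a Riesz representation for a bounded linear functional $\tau:\O_d\to\bC$ annihilating $q(\A_d)$. This settles (a) and (b) at once; the genuinely new content is (c), and the feature to exploit is that \emph{both} distinguished vectors are annihilated by $\widehat{\pi\circ q}(\fq)$.

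For (c), I would set $\mu=(\tau\circ q)|_{\S_d}$ and aim to check the hypothesis of Lemma \ref{L:extVd}, namely $\mu(a+b^*)=\widehat\mu(a(I-\fq)+(I-\fq)b^*)$ for all $a,b\in\A_d$. Since $\mu$ annihilates $\A_d$, we have $\widehat\mu(a)=\mu(a)=0$, so expanding $a(I-\fq)=a-a\fq$ and $(I-\fq)b^*=b^*-\fq b^*$ collapses the desired identity to the two scalar conditions
\[
\widehat\mu(a\fq)=0 \qquad\text{and}\qquad \widehat\mu(\fq b^*)=0, \qquad a,b\in\A_d.
\]
To evaluate these, I would note that $w\mapsto\langle\widehat{\pi\circ q}(w)\xi,\eta\rangle$ is weak-$*$ continuous on $\fT_d^{**}$ and agrees with $\tau\circ q$ on $\fT_d$, so by uniqueness of the weak-$*$ continuous extension its restriction to $\S_d^{**}\subset\fT_d^{**}$ is exactly $\widehat\mu$; in particular this formula applies to $w=a\fq$ and $w=\fq b^*$, both of which lie in $\S_d^{**}$.

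The two vanishing statements then follow purely from the placement of $\xi$ and $\eta$. Because $\xi\in\operatorname{ran}\widehat{\pi\circ q}(I-\fq)$ we have $\widehat{\pi\circ q}(\fq)\xi=0$, whence
\[
\widehat\mu(a\fq)=\langle(\pi\circ q)(a)\,\widehat{\pi\circ q}(\fq)\xi,\eta\rangle=0.
\]
For the second identity I would use that $\fq$ is a self-adjoint projection, so $\widehat{\pi\circ q}(\fq)$ is a self-adjoint projection, and
\[
\widehat\mu(\fq b^*)=\langle(\pi\circ q)(b)^*\xi,\widehat{\pi\circ q}(\fq)\eta\rangle=0,
\]
the last equality holding because $\eta\in\operatorname{ran}\widehat{\pi\circ q}(I-\fq)$ forces $\widehat{\pi\circ q}(\fq)\eta=0$. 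With both conditions verified, Lemma \ref{L:extVd} produces a weak-$*$ continuous extension of $\mu$ to $\V_d$, establishing (c).

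The crux of the argument is the reduction of the Lemma \ref{L:extVd} criterion to the pair $\widehat\mu(a\fq)=\widehat\mu(\fq b^*)=0$, together with the recognition that these hold precisely because the construction of Corollary \ref{C:NoOneRep} places \emph{both} $\xi$ and $\eta$ in the range of $\widehat{\pi\circ q}(I-\fq)$. This is also the conceptual point of the example: it should be contrasted with Example \ref{E:NoACForFree}, where $\eta$ is an eigenvector of $\pi(q(L_1))$ and hence is \emph{not} annihilated by $\widehat{\pi\circ q}(\fq)$, which is exactly what destroys weak-$*$ continuity there.
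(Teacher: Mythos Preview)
Your proposal is correct and follows essentially the same approach as the paper: both invoke Corollary \ref{C:NoOneRep} to place $\xi,\eta$ in the range of $\widehat{\pi\circ q}(I-\fq)$, then verify the hypothesis of Lemma \ref{L:extVd} via the inner-product formula. The only difference is cosmetic: the paper inserts $(I-\fq)$ on both sides at once and uses property (a) of $\fq$ to simplify $(I-\fq)(a+b^*)(I-\fq)=a(I-\fq)+(I-\fq)b^*$ directly, whereas you expand term by term and check $\widehat\mu(a\fq)=\widehat\mu(\fq b^*)=0$ separately.
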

\begin{proof}
By Corollary \ref{C:NoOneRep}, there are unit vectors $\xi,\eta$ in the range of $\widehat{\pi\circ q}(I-\fq)$ such that $(\pi,\xi,\eta)$ is a Riesz representation for some bounded linear functional  $\tau: \O_d\to \bC$ annihilating $q(\A_d)$. Using property (a) of the universal structure projection (see Subsection \ref{SS:ncda}), we find
\begin{align*}
(\tau\circ q)(a+b^*)&=\langle (\pi\circ q)(a+b^*)\xi,\eta\rangle\\
&=\langle \widehat{\pi\circ q}((I-\fq)(a+b^*)(I-\fq))\xi,\eta \rangle\\
&=\langle \widehat{\pi\circ q}(a(I-\fq)+(I-\fq)b^*)\xi,\eta\rangle\\
&=\widehat{\tau \circ q}(a(I-\fq)+(I-\fq)b^*)
\end{align*}
for every $a,b\in \A_d$. Invoking Lemma \ref{L:extVd}, we see that $\tau \circ q|_{\S_d}$ extends weak-$*$ continuously to $\V_d$.
\end{proof}

In light of the foregoing discussion,  it appears that there is no obvious relationship between an analytic functional on $\S_d$ extending weak-$*$ continuously to $\V_d$ and the type of its Riesz representation. Without further qualification, a perfect analogue of Corollary \ref{C:classFM2} is too much to hope for. The next development is the main result of this section, and it identifies precisely what the obstruction is in terms of the universal structure projection $\fq\in \A_d^{**}$.

\begin{theorem}
Let $\tau:\S_d\to \bC$ be a bounded linear functional with the property that $\A_d\subset \ker \tau$. 	Then $\tau$ extends weak$^*$-continuously to $\V_d$ if and only if
	\[
		\{\fq a^*-a^*\fq:a\in\A_d\}\subset \ker\widehat\tau.
	\]
	\label{T:ACComm}
\end{theorem}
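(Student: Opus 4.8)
The plan is to reduce the extension question to the concrete sufficient condition of Lemma \ref{L:extVd}, namely the equality $\tau(a+b^*)=\widehat\tau(a(I-\fq)+(I-\fq)b^*)$, and to analyze the discrepancy between its two sides. Throughout I view $\S_d^{**}$ as a weak-$*$ closed subspace of $\fT_d^{**}$ and $\widehat\tau$ as its canonical weak-$*$ continuous extension. I would dispose of $d=1$ immediately: there $\A_1^{**}$ is commutative, which forces $\fq a^*-a^*\fq=0$, so the criterion is vacuous and the statement is exactly Corollary \ref{C:classFM2}. Assume henceforth $d\geq 2$. Since $\widehat\tau$ is weak-$*$ continuous and $\tau$ vanishes on $\A_d$, it vanishes on $\A_d^{**}=\overline{\A_d}^{\text{w}*}$; in particular $\widehat\tau(a\fq)=0$ as $a\fq\in\A_d^{**}$. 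Using $a^*\fq=\fq a^*\fq$ (the adjoint of property (a)), a direct expansion then gives
\[ \tau(a+b^*)-\widehat\tau(a(I-\fq)+(I-\fq)b^*)=\widehat\tau(\fq b^*)=\widehat\tau(b^*\fq)+\widehat\tau(\fq b^*-b^*\fq), \]
so the whole theorem hinges on the two summands on the right, the second being precisely the quantity in the criterion.

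Next I would show $\widehat\tau(b^*\fq)=0$ for every $b\in\A_d$. Applying Lemma \ref{L:PolarForm} to a Hahn--Banach extension of $\tau$ to $\fT_d$ produces a cyclic $*$-representation $\pi$, a vector $\xi$, and a partial isometry $f$ with $\widehat\tau(w)=\langle\widehat\pi(w)\xi,\widehat\pi(f)\xi\rangle$; because $\A_d\subset\ker\tau$, the vector $\eta:=\widehat\pi(f)\xi$ is orthogonal to $\pi(\A_d)\xi$, so Lemma \ref{L:Main}(ii) yields $\widehat\pi(\fq)\xi=0$. Hence $\widehat\tau(b^*\fq)=\langle\pi(b^*)\widehat\pi(\fq)\xi,\eta\rangle=0$. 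This uses a particular extension, but since $b^*\fq=(\fq b)^*\in(\A_d^{**})^*\subseteq\S_d^{**}$, the value of $\widehat\tau(b^*\fq)$ depends only on $\tau|_{\S_d}$.

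The delicate point, which I expect to be the main obstacle, is to show that the commutator $\fq a^*-a^*\fq=\fq a^*(I-\fq)$ actually belongs to $\S_d^{**}$, so that the criterion is meaningful and the expansions above are legitimate. I would prove this via the annihilator description $\S_d^{**}=(\S_d^{\perp})^{\perp}$: it suffices to verify $\widehat\phi(\fq a^*-a^*\fq)=0$ for every $\phi\in\fT_d^*$ vanishing on $\S_d$. Such a $\phi$ annihilates both $\A_d$ and $\A_d^*$, and here the self-adjointness of $\S_d$ is essential, because it will force a doubled vanishing. Running the polar-decomposition machinery on $\phi$ gives $\pi,\xi$ and $\eta=\widehat\pi(f)\xi$ as above; vanishing on $\A_d$ yields $\widehat\pi(\fq)\xi=0$ through Lemma \ref{L:Main}(ii), while vanishing on $\A_d^*$ gives $\xi\perp\pi(\A_d)\eta$. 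Since $\widehat\pi(f^*)\eta=\widehat\pi(f^*f)\xi=\xi$ and $\widehat\pi(ff^*)\eta=\eta$, a second application of Lemma \ref{L:Main}, now to the triple $(\pi,\eta,f^*)$, produces $\widehat\pi(\fq)\eta=0$. With both $\widehat\pi(\fq)\xi=0$ and $\widehat\pi(\fq)\eta=0$ in hand, the identities $\widehat\pi(a^*\fq)\xi=\pi(a^*)\widehat\pi(\fq)\xi=0$ and $\langle\widehat\pi(\fq a^*)\xi,\eta\rangle=\langle\pi(a^*)\xi,\widehat\pi(\fq)\eta\rangle=0$ give $\widehat\phi(\fq a^*-a^*\fq)=0$, as required; the same computation shows $(I-\fq)b^*\in\S_d^{**}$, so all the manipulations of the first paragraph are valid.

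Finally I would assemble the two implications. For the forward direction, suppose $\tau$ extends to a weak-$*$ continuous functional $\bar\tau$ on $\V_d$. The normal extension $\widehat{\id}:\fT_d^{**}\to B(\fF^2_d)$ of the identity representation satisfies $\widehat{\id}(\fq)=0$, since that representation is absolutely continuous (Lemma \ref{L:UnivStructProj}), whence $\widehat{\id}(\fq a^*-a^*\fq)=0$; as $\widehat\tau=\bar\tau\circ\widehat{\id}|_{\S_d^{**}}$ by uniqueness of weak-$*$ continuous extensions, the criterion follows. For the converse, granting the criterion, the previous steps give $\widehat\tau(\fq b^*)=\widehat\tau(b^*\fq)+\widehat\tau(\fq b^*-b^*\fq)=0$, so the hypothesis of Lemma \ref{L:extVd} holds and $\tau$ extends weak-$*$ continuously to $\V_d$.
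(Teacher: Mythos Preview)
Your proof is correct and follows essentially the same strategy as the paper: both use the GNS/polar decomposition together with Lemma~\ref{L:Main}(ii) to obtain $\widehat\pi(\fq)\xi=0$, reduce the sufficiency direction to Lemma~\ref{L:extVd}, and handle the necessity direction by tracking $\fq$ under the identity representation (you via $\widehat{\id}(\fq)=0$, the paper via a Goldstine net, which amounts to the same thing). Your treatment of $d=1$ and your reduction $\widehat\tau(\fq b^*)=\widehat\tau(b^*\fq)+\widehat\tau(\fq b^*-b^*\fq)$ are in fact slightly cleaner than the paper's detour through $\A_d^{\perp\perp}$.

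One comment: your third paragraph is correct but does far more work than necessary. The membership $\fq a^*-a^*\fq\in\S_d^{**}$ is immediate. Since $\A_d^{**}$ is an algebra containing $\fq$, both $a\fq$ and $\fq a$ lie in $\A_d^{**}$; taking adjoints, $\fq a^*=(a\fq)^*$ and $a^*\fq=(\fq a)^*$ lie in $(\A_d^{**})^*$, which is the weak-$*$ closure of $\A_d^\dagger$ in $\fT_d^{**}$ and hence is contained in $\S_d^{**}$. The same one-line argument gives $(I-\fq)b^*\in\S_d^{**}$. The double application of Lemma~\ref{L:Main} to annihilators of $\S_d$ is thus an interesting observation but not needed here.
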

\begin{proof}
We first make some preliminary remarks. Applying Lemma \ref{L:PolarForm} to a Hahn--Banach extension of $\tau$ to $\fT_d$, we obtain a unital $*$-representation $\pi:\fT_d\to B(\fH)$, a vector $\xi\in\fH$ with $\|\xi\|=\|\tau\|^{1/2}$ that is cyclic for $\pi$ and a partial isometry $f\in\fT_d^{**}$ such that  $\wh{\pi}(f^*f)\xi=\xi$, 
	\[ \tau(s)=\langle \pi(s)\xi,\widehat\pi(f)\xi\rangle, \quad s\in \S_d.\]
	Put $\eta=\widehat\pi(f)\xi$.
	Because $\A_d\subset \ker \tau$ we find $\eta\in (\pi(\A_d)\xi)^\perp$.
	Lemma \ref{L:Main} yields $\wh{\pi}(\fq)\xi=0$, whence
	\begin{equation}\label{Eq:tau1}
	 \tau(s)=\ip{\wh{\pi}( s(I-\fq) )\xi,\eta}=\wh{\tau}( s(I-\fq) ) 
	\end{equation}
	for every $s\in \S_d$. 	
	
	Let us now turn to the proof.
	Suppose first that 
	\[
	\{\fq a^*-a^*\fq:a\in\A_d\}\subset \ker\widehat\tau
	\]
	which implies
	\[
	\{(I-\fq) a^*-a^*(I-\fq):a\in\A_d\}\subset \ker\widehat\tau.
	\]
The space $\A_d^{\perp\perp}\subset \S_d^{**}$ coincides with the weak-$*$ closure of $\A_d$, so by weak-$*$ continuity of the functional $\widehat\tau$, we find  we obtain $\A_d^{\perp\perp}\subset \ker \widehat\tau$. Therefore,
	\begin{equation}\label{Eq:tau3}
	\widehat\tau(a(I-\fq))=0=\tau(a), \quad a\in \A_d
	\end{equation}
	and
	\[
	\{(I-\fq) w^*-w^*(I-\fq):w\in\A^{\perp\perp}_d\}\subset \ker \widehat\tau.
	\]
	In particular, this means that
	\begin{equation}\label{Eq:tau2}
	\widehat\tau(b^*(I-\fq))=\widehat\tau((I-\fq)b^*(I-\fq)), \quad b\in \A_d.
	\end{equation}
	For $a,b\in\A_d$, we thus find using \eqref{Eq:tau1}, \eqref{Eq:tau3} and \eqref{Eq:tau2} along with property (a) of the universal structure projection (see Subsection \ref{SS:ncda}) that
	\begin{align*}
	\tau(a+b^*)&=\tau(a)+\tau(b^*)=\widehat\tau(b^*(I-\fq))\\
	&=\widehat\tau((I-\fq)b^*(I-\fq))\\
	&=\widehat\tau(a(I-\fq))+\widehat\tau((I-\fq)b^*(I-\fq))\\
	&=\widehat\tau(a(I-\fq)+(I-\fq)b^*).
	\end{align*}
 An application of Lemma \ref{L:extVd} then shows that $\tau$ extends weak-$*$ continuously to $\V_d$.

Conversely, suppose that $\tau$ is known to extend weak-$*$ continuously to $\V_d$.
	By Goldstine's theorem, there exists a contractive net $(a_i)$ in $\A_d$ that converges to $\fq$ in the weak$^*$ topology of $\A_d^{**}$. Fix $b\in \A_d$. By properties (a) and (b) of the universal structure projection (see Subsection \ref{SS:ncda}), we see that the nets $(a_ib)$ and $(ba_i)$ converge to $0$ in the weak-$*$ topology of $B(\fF^2_d)$ for every $b\in \A_d$, and hence so do the nets $(\tau(a^*_i b^*))$ and $(\tau(b^*a^*_i))$. Thus, we find
	\[
	\widehat\tau(\fq b^*)=\lim_i \tau(a_i^* b^*)=0 \qand\widehat\tau(b^*\fq)=\lim_i \tau(b^*a_i^* )=0
	\]
	 which implies the desired property.
\end{proof}

We remark here that in the univariate context, the algebra $\A_1$ is commutative, and thus so is $\A_1^{**}$. Consequently, in this case the set $\{\fq a^*-a^*\fq:a\in\A_1\}$ is zero, and the condition of the previous theorem is automatically satisfied.

\section{Commutative algebras of multipliers}\label{S:Comm} 
Up to now, we have adopted the point of view that the non-commutative algebras $\A_d$ provide a multivariate analogue of the disc algebra for which generalizations of the classical F. and. M. Riesz theorem can be explored.  Conventional wisdom dictates that there is also a commutative side to this story, where the multivariate analogue of the disc algebra consists of multipliers on the so-called Drury--Arveson Hilbert space of functions on the complex unit ball. The purpose of this section is to examine this algebra of multipliers and corresponding extensions of the F. and M. Riesz theorem.  In fact, we will consider an entire family of spaces of functions that can be efficiently analyzed simultaneously. This family is rather rich, containing as special cases the Drury--Arveson space and the Dirichlet space. Let us now describe the setting more precisely. 

Throughout this section, we will denote by $\H$ a regular unitarily invariant reproducing kernel Hilbert space on the open unit ball $\bB_d\subset \bC^d$. The precise definition of such an object will not be needed for our purposes; for more details the reader may consult \cite{agler2002},\cite{hartz2017isom} or \cite{CH2018}. We let $\M(\H)$ denote the multiplier algebra of $\H$. It is known that the coordinate multipliers $M_{z_1},M_{z_2},\ldots,M_{z_d}$ are bounded on $\H$, so that all polynomials lie in $\M(\H)$. Accordingly, we let $\A(\H)\subset B(\H)$ be the norm closure of the polynomial multipliers, and let $\fT(\H)\subset B(\H)$ denote the $\rC^*$-algebra that $\A(\H)$ generates. The main piece of information regarding these objects that we require is the following.

\begin{theorem}\label{T:e}
Let $\H$ be a regular unitarily invariant reproducing kernel Hilbert space on $\bB_d$.
Then, there is a central projection $\fe\in \A(\H)^{**}$ with the following properties.
\begin{enumerate}[{\rm (i)}]
	\item For each $1\leq j\leq d$, we let $U_j=\fe M_{z_j}$. Then, the $d$-tuple $(U_1,\ldots,U_d)$ is a spherical unitary, in the sense that it consists of commuting normal elements satisfying $\sum_{j=1}^d U_j U_j^*=I$. 
	\item The algebra $\fN=\A(\H)^{**}\fe$ is the von Neumann algebra generated by $U_1,\ldots,U_d$. 
	\item There is a unital, completely isometric and weak-$*$ homeomorphic isomorphism $\Theta: \A(\H)^{**}(I-\fe)\to \M(\H)$ such that
	\[
	\Theta(a(I-\fe))=a, \quad a\in \A(\H).
	\]
\end{enumerate}
\end{theorem}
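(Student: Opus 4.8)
The plan is to produce $\fe$ as a commutative analogue of the universal structure projection $\fq$ from Subsection~\ref{SS:ncda}, obtained from the weak-$*$ continuous extension of the identity representation. First I would record that since $\A(\H)$ is commutative and multiplication in the von Neumann algebra $\fT(\H)^{**}$ is separately weak-$*$ continuous, the weak-$*$ closed subalgebra $\A(\H)^{**}\subset\fT(\H)^{**}$ is itself commutative; in particular every idempotent in $\A(\H)^{**}$ is automatically central, so the centrality asserted for $\fe$ will come for free once $\fe$ is exhibited inside $\A(\H)^{**}$.

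Next I would consider the inclusion $\iota:\A(\H)\hookrightarrow B(\H)$, whose range lies in $\M(\H)$, together with its weak-$*$ continuous homomorphic extension $\widehat\iota:\A(\H)^{**}\to B(\H)$. Because the polynomial multipliers are weak-$*$ dense in $\M(\H)$, the canonical map $\widehat\iota$ is a complete quotient map with range exactly $\M(\H)$. I would then isolate a projection $\fe\in\A(\H)^{**}$ characterized by $\A(\H)^{**}\fe=\ker\widehat\iota$, so that $\widehat\iota$ is injective on the complementary corner $\A(\H)^{**}(I-\fe)$. Granting this, property (iii) is immediate: the restriction $\Theta=\widehat\iota|_{\A(\H)^{**}(I-\fe)}$ is a unital, weak-$*$ continuous, completely contractive homomorphism onto $\M(\H)$ satisfying $\Theta(a(I-\fe))=a$, and it is a complete isometry because it is injective on the corner complementing the kernel of a complete quotient map; weak-$*$ homeomorphy is then supplied by \cite[Theorem A.2.5]{BLM2004}.

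For (i) and (ii), the idea is that on the complementary corner the analytic behaviour of the coordinate multipliers degenerates to their boundary behaviour. Drawing on the fine $\Cstar$-structure of $\fT(\H)$ for regular unitarily invariant kernels (as developed in \cite{CH2018} and \cite{DavHar2020}), the quotient of $\fT(\H)$ by its analytic part is commutative and realizes the images of $M_{z_1},\ldots,M_{z_d}$ as the coordinate multiplication operators on the sphere, which are commuting normals with $\sum_j U_j U_j^*=\fe$. Commutativity of the $U_j$ is inherited from that of $\A(\H)^{**}$, while normality holds because each $U_j$ coincides with multiplication by $z_j$ on the spherical part. It then follows that $\A(\H)^{**}\fe$ is self-adjoint, since each $U_j^*$ lies in the von Neumann algebra generated by the normal $U_j$, and, being weak-$*$ closed, it is exactly the von Neumann algebra $\fN$ generated by the spherical unitary $(U_1,\ldots,U_d)$, which is (ii).

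The main obstacle is the construction of $\fe$ itself, and in particular the verification that it genuinely lives in $\A(\H)^{**}$ rather than merely in the ambient $\fT(\H)^{**}$: one must show that the projection complementing $\ker\widehat\iota$, equivalently the projection onto the spherical summand, can be chosen inside the commutative subalgebra $\A(\H)^{**}$, and that the complementary corner surjects onto all of $\M(\H)$. This is precisely where regularity enters, through the $\Cstar$-algebraic description of $\fT(\H)$ and the weak-$*$ density of polynomials in $\M(\H)$; once this decomposition is in place, the remaining verifications of the spherical relation and of normality are routine.
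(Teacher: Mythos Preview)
The paper does not prove this result: its entire proof is the single sentence ``This is \cite[Theorem 5.1]{DavHar2020}.'' So there is no argument in the paper to compare your sketch against; the theorem is quoted as a black box.

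Your outline is broadly in the spirit of the argument in \cite{DavHar2020}, but as you yourself flag, the whole difficulty is concentrated in the step you leave open: producing a projection $\fe$ inside $\A(\H)^{**}$ (not merely in $\fT(\H)^{**}$) with $\A(\H)^{**}\fe=\ker\widehat\iota$. Nothing in your sketch explains why such a projection exists; in a general dual operator algebra, weak-$*$ closed ideals need not be generated by projections. In \cite{DavHar2020} this is obtained through the Henkin/totally singular decomposition of $\A(\H)^*$, which is where regularity is genuinely used; your appeal to ``the $\Cstar$-algebraic description of $\fT(\H)$'' gestures at the right ingredients but does not close the gap. A second point that needs care: your claim that $\widehat\iota$ is a complete quotient onto $\M(\H)$ is equivalent to the restriction map $\M(\H)_*\to\A(\H)^*$ being a complete isometry, which in turn requires that the closed unit ball of $\A(\H)$ be weak-$*$ dense in that of $\M(\H)$. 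This Kaplansky-type density is true in the present setting, but it is essentially a consequence of (iii) rather than an independent input, so invoking it to prove (iii) risks circularity unless you supply a separate argument.

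In short: your plan is reasonable as a roadmap, but the proposal as written defers exactly the substantive content of the theorem to \cite{CH2018} and \cite{DavHar2020}, which is effectively what the paper does by citing the result outright.
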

\begin{proof}
This is  \cite[Theorem 5.1]{DavHar2020}.
\end{proof}

The reader will notice that the central projection $\fe$ in the previous result behaves similarly to the universal structure projection $\fq\in \A_d^{**}$ (see Subsection \ref{SS:ncda}). One key difference, however, is that $\fe$ commutes with every element in $\fT(\H)^{**}$. The following standard fact relates the projection $\fe$ to weak-$*$ continuity of functionals on $\A(\H)$.

%

\begin{lemma}
	Let $\tau:\A(\H)\to \bC$ be a bounded linear functional. Then, $\tau$ extends weak-$*$ continuously to $\M(\H)$ if and only if
	\[ \tau(a)=\wh{\tau}(a(I-\fe)), \quad a\in\A(\H). \]
	\label{L:HiddenE}
\end{lemma}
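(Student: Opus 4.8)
The plan is to transport the extension $\widehat\tau$ back and forth through the isomorphism $\Theta\colon\A(\H)^{**}(I-\fe)\to\M(\H)$ supplied by Theorem~\ref{T:e}, exploiting two features throughout: that $\fe$ is a \emph{central} projection (so that multiplication by $I-\fe$ is weak-$*$ continuous), and that the canonical image of $\A(\H)$ is weak-$*$ dense in $\A(\H)^{**}$ by Goldstine's theorem. Note at the outset that $\Theta(a(I-\fe))=a$ forces $\Theta^{-1}(a)=a(I-\fe)$ for $a\in\A(\H)$, a relation I will use in both directions.

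For the reverse implication, suppose $\tau(a)=\widehat\tau(a(I-\fe))$ for all $a\in\A(\H)$, and simply set $\td\tau=\widehat\tau\circ\Theta^{-1}\colon\M(\H)\to\bC$. Since $\Theta^{-1}$ is a weak-$*$ homeomorphism of $\M(\H)$ onto the weak-$*$ closed subspace $\A(\H)^{**}(I-\fe)\subseteq\A(\H)^{**}$, and $\widehat\tau$ is weak-$*$ continuous on $\A(\H)^{**}$, the composite $\td\tau$ is weak-$*$ continuous. Moreover $\td\tau(a)=\widehat\tau(\Theta^{-1}(a))=\widehat\tau(a(I-\fe))=\tau(a)$, so $\td\tau$ is the desired weak-$*$ continuous extension of $\tau$ to $\M(\H)$.

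For the forward implication, suppose $\tau$ admits a weak-$*$ continuous extension $\td\tau\colon\M(\H)\to\bC$, and define $\psi\colon\A(\H)^{**}\to\bC$ by $\psi(w)=\td\tau(\Theta(w(I-\fe)))$. The map $w\mapsto w(I-\fe)$ is weak-$*$ continuous, $\Theta$ is a weak-$*$ homeomorphism, and $\td\tau$ is weak-$*$ continuous, so $\psi$ is weak-$*$ continuous. On $\A(\H)$ we compute $\psi(a)=\td\tau(\Theta(a(I-\fe)))=\td\tau(a)=\tau(a)=\widehat\tau(a)$, so $\psi$ and $\widehat\tau$ are weak-$*$ continuous functionals agreeing on the weak-$*$ dense subalgebra $\A(\H)$; hence $\psi=\widehat\tau$. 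Evaluating at $a(I-\fe)$ and using $(I-\fe)^2=I-\fe$ then yields $\widehat\tau(a(I-\fe))=\psi(a(I-\fe))=\td\tau(\Theta(a(I-\fe)))=\td\tau(a)=\tau(a)$, which is exactly the asserted identity. (Alternatively, one could run the Goldstine argument as in the converse direction of Theorem~\ref{T:ACComm}, taking a bounded net $a_i\to\fe$ in $\A(\H)$ and showing $\widehat\tau(a\fe)=\lim_i\tau(aa_i)=\lim_i\td\tau(aa_i)=0$ since $a_i\to 0$ and hence $aa_i\to 0$ weak-$*$ in $\M(\H)$.)

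The only genuinely delicate point is the weak-$*$ continuity of $w\mapsto w(I-\fe)$ on $\A(\H)^{**}$ and the compatibility of the several weak-$*$ topologies in play (on $\A(\H)^{**}$ as an abstract bidual, on its image inside $\fT(\H)^{**}$, and on $\M(\H)$). I would dispatch this by recalling, as in Subsection~\ref{SS:C*alg}, that the embedding $\A(\H)^{**}\hookrightarrow\fT(\H)^{**}$ is a weak-$*$ homeomorphism onto a weak-$*$ closed subalgebra, and that $\fe$ is central in the von Neumann algebra $\fT(\H)^{**}$, in which multiplication is separately weak-$*$ continuous; this simultaneously makes multiplication by $I-\fe$ weak-$*$ continuous and confirms that $\A(\H)^{**}(I-\fe)$ is weak-$*$ closed.
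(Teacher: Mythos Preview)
Your proof is correct. The forward direction (especially the parenthetical Goldstine alternative) matches the paper's argument closely. The reverse direction differs: the paper shows that $\tau$ kills weak-$*$ null \emph{sequences} in $\A(\H)$ (using $\Theta$ to transport them to null sequences in $\A(\H)^{**}(I-\fe)$) and then invokes a sequential criterion from \cite{BicHarMcC2018} to obtain the extension, whereas you construct the extension explicitly as $\widehat\tau\circ\Theta^{-1}$. Your route is more self-contained and makes the extension concrete; the paper's route is shorter but leans on an external lemma about the predual of $\M(\H)$.

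One minor remark: in justifying weak-$*$ continuity of $w\mapsto w(I-\fe)$ you invoke centrality of $\fe$ in $\fT(\H)^{**}$, but for this lemma that is unnecessary. Separate weak-$*$ continuity of multiplication in the von Neumann algebra $\fT(\H)^{**}$, combined with the weak-$*$ homeomorphic embedding $\A(\H)^{**}\hookrightarrow\fT(\H)^{**}$, already suffices. Centrality becomes relevant only later, in Theorem~\ref{T:CommFMR}.
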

\begin{proof}
	Assume first that $\tau$ extends weak-$*$ continuously to $\M(\H)$.
	By Goldstine's Theorem, there is a contractive net $(b_i)_i$ in $\A(\H)$ that converges to $I-\fe$ in the weak-$*$ topology of $\A(\H)^{**}$. By part (iii) of Theorem \ref{T:e}, we infer that $(b_i)$ converges to $1$ in the weak-$*$ topology of $B(\H)$. Hence, for $a\in \A(\H)$ we obtain
	\[
	\tau(a)=\lim_i \tau(ab_i)=\widehat\tau(a(I-\fe)).
	\]	
	Conversely, assume that \[ \tau(a)=\wh{\tau}(a(I-\fe)), \quad a\in\A(\H). \]
	Let $(b_n)_n$ be a sequence in $\A(\H)$ converging to $0$ in the weak-$*$ topology of $B(\H)$.
	Then $( b_n (I-\fe))_n$ converges to $0$ in the weak-$*$ topology of $\A(\H)^{**}$ by virtue of part (iii) of Theorem \ref{T:e}. Therefore
	\[ \lim_{n\to\infty}\tau(b_n)=\lim_{n\to\infty}\wh{\tau}( b_n(I-\fe))=0. \]
	It follows from \cite[Lemma 3.1]{BicHarMcC2018} that $\tau$ extends weak-$*$ continuously to $\M(\H)$.
\end{proof}

The next step is our crucial technical tool, and is reminiscent of Lemma \ref{L:Main}. The maximality assumption appearing therein was introduced in \cite{CTGelfand}, and it is satisfied by many spaces of interest, such as the Drury--Arveson space and the Dirichlet space; see \cite[Subsection 2.3]{CH2018} for a discussion of this notion.

\begin{lemma}
	Let $\H$ be a unitarily invariant, regular, maximal, complete Nevanlinna--Pick reproducing kernel Hilbert space on $\bB_d$. 	Let $\pi:\fT(\H)\to B(\E)$ be a unital $*$-representation. Suppose there exist a vector $\xi\in\E$ along with $f\in\fT(\H)^{**}$ such that $\wh{\pi}(f^*f)\xi=\xi$ and $\xi\in \left(\pi(\A(\H))\wh{\pi}(f)\xi\right)^\perp$. If we let $\F=\ol{\pi(\A(\H))\widehat\pi(f)\xi}$, then $\wh{\pi}(\fe)|_{\F}=0$. 
	\label{L:CommMain}
\end{lemma}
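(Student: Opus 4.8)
The plan is to mirror the structure of Lemma \ref{L:Main}, exploiting that $\fe$ is \emph{central} in $\fT(\H)^{**}$. Write $\zeta=\wh\pi(f)\xi$, so that $\F=\ol{\pi(\A(\H))\zeta}$; the hypothesis $\wh\pi(f^*f)\xi=\xi$ gives $\xi=\wh\pi(f^*)\zeta$ and $\wh\pi(ff^*)\zeta=\zeta$. Set $Q=\wh\pi(\fe)$. Since $\fe$ is central, $Q$ is a projection commuting with $\pi(\fT(\H))$ and with all of $\wh\pi(\fT(\H)^{**})$; in particular $Q$ reduces $\pi(\fT(\H))$ and $Q\F=\ol{\pi(\A(\H))Q\zeta}$. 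It therefore suffices to prove $Q\zeta=0$, as this forces $Q\F=\{0\}$, i.e. $\wh\pi(\fe)|_{\F}=0$.

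First I would isolate the analytic information. Put $\zeta_0=Q\zeta$ and $\xi_0=Q\xi$; centrality of $\fe$ gives $\xi_0=\wh\pi(f^*)\zeta_0$. The orthogonality hypothesis says the functional $a\mapsto\langle\pi(a)\zeta,\xi\rangle$ vanishes on $\A(\H)$. Decomposing along the reducing projection $Q$ and discarding the cross terms, this reads $\langle\pi(a)\zeta_0,\xi_0\rangle+\langle\pi(a)(I-Q)\zeta,(I-Q)\xi\rangle=0$. On the range of $\wh\pi(I-\fe)$ the representation of $\A(\H)$ is absolutely continuous, so by Theorem \ref{T:e}(iii) the second summand is the restriction to $\A(\H)$ of a weak-$*$ continuous functional on $\M(\H)$; hence so is $\Lambda_0:=\langle\pi(\cdot)\zeta_0,\xi_0\rangle$. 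On the other hand $\Lambda_0$ manifestly factors through $\fe$, so its weak-$*$ continuous extension annihilates $a(I-\fe)$ for every $a$. Lemma \ref{L:HiddenE} then forces $\Lambda_0\equiv 0$ on $\A(\H)$, that is $\langle\pi(a)\zeta_0,\xi_0\rangle=0$ for all $a\in\A(\H)$.

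Next I would pass to the boundary. By Theorem \ref{T:e}(i) the tuple $(\pi(M_{z_j})|_{Q\E})$ is a spherical unitary, so the commutative von Neumann algebra it generates has joint spectrum in $\bS_d$, and on the cyclic subspace of $\zeta_0$ it is unitarily equivalent to multiplication on $L^2(\mu)$ for a regular Borel measure $\mu$ on $\bS_d$, with $\zeta_0\cong 1$. Since $\wh\pi(f)$ restricts on $Q\E$ to a member of this commutative algebra, it becomes multiplication by some $\phi\in L^\infty(\mu)$; the relation $\wh\pi(ff^*)\zeta=\zeta$ gives $|\phi|=1$ $[\mu]$-a.e., and $\xi_0\cong\ol\phi$. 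In this model the vanishing of $\Lambda_0$ reads $\int_{\bS_d}a\,\phi\,d\mu=0$ for every polynomial $a$, so the complex measure $\nu:=\phi\,d\mu$ annihilates the ball algebra $\rA(\bB_d)$, while $|\nu|=\mu$.

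To conclude I would marry two external inputs. The F.\ and M.\ Riesz theorem for the ball (Henkin--Valskii) shows that an analytic measure such as $\nu$ is absolutely continuous with respect to surface measure $\sigma$ on $\bS_d$, whence $\mu=|\nu|\ll\sigma$. On the other hand, the range of $\wh\pi(\fe)$ carries the totally singular part of the representation, so the spectral measure $\mu$ of $\zeta_0$ there is singular with respect to $\sigma$; this is exactly where the complete Nevanlinna--Pick and maximality hypotheses enter, through the absolute-continuity/singularity dichotomy of \cite{CD2016abscont} and \cite{DavHar2020}. A measure that is simultaneously absolutely continuous and singular is zero, so $\mu=0$, hence $\zeta_0=0$ and $\wh\pi(\fe)|_{\F}=0$. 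I expect the main obstacle to be precisely this final synthesis: pinning down the reference measure for which the $\fe$-component is singular and for which the F.\ and M.\ Riesz theorem is available, and reconciling the Henkin (absolutely continuous) and totally singular pieces, all of which rests on the structure theory for CNP multiplier algebras rather than on elementary manipulation.
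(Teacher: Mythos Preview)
Your argument through the vanishing of $\Lambda_0$ is sound and follows a genuinely different route from the paper, which instead compresses to $\F$, uses the maximality hypothesis (via \cite[Lemma 3.6]{CTGelfand}) to recognize that the compression to the range of $\wh\rho(\fe)$ is a $*$-homomorphism, and then applies Agler's reducing-subspace trick to conclude directly. The gap in your proposal lies entirely in the ``boundary'' synthesis. The Henkin--Valskii theorem does \emph{not} say that a measure on $\bS_d$ annihilating $\rA(\bB_d)$ is absolutely continuous with respect to surface measure $\sigma$ when $d\geq 2$; it only says such a measure is Henkin for the ball algebra, and for $d\geq 2$ there exist Henkin measures singular with respect to $\sigma$. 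So the step $\mu=|\nu|\ll\sigma$ fails. Dually, the claim that spectral measures arising on the range of $\wh\pi(\fe)$ are singular with respect to $\sigma$ is neither established in the cited references nor true in general: the projection $\fe$ detects failure of weak-$*$ extendability to $\M(\H)$, a notion that depends on $\H$ and is not the same as singularity with respect to $\sigma$. The two bands you are trying to oppose---Henkin for the ball algebra versus non-Henkin for $\A(\H)$---have no reason to be complementary.

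The argument can in fact be completed cheaply right after $\Lambda_0\equiv 0$, with no boundary theory at all. By Theorem~\ref{T:e}(ii) the algebra $\A(\H)^{**}\fe$ is already a von Neumann algebra, so for each $a\in\A(\H)$ one has $\fe a^*\in\A(\H)^{**}$; consequently on $Q\E$ the weak-$*$ closure of $\pi(\A(\H))$ is self-adjoint and therefore contains $\wh\pi(f^*)|_{Q\E}$. The weak-$*$ continuous vector functional $T\mapsto\langle T\zeta_0,\xi_0\rangle$ vanishes on $\pi(\A(\H))|_{Q\E}$, hence on $\wh\pi(f^*)|_{Q\E}$, giving $\|\xi_0\|^2=\langle\wh\pi(f^*)\zeta_0,\xi_0\rangle=0$ and then $\zeta_0=\wh\pi(f)\xi_0=0$. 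Note that, unlike the paper's argument, this line never invokes the maximality or complete Nevanlinna--Pick hypotheses.
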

\begin{proof}
Let $\rho:\A(\H)\to B(\F)$ be the unital completely contractive homomorphism given by $\rho(a)=\pi(a)|_{\F}$ for $a\in\A(\H)$. Using Theorem \ref{T:e}, we see that the map
\[
w\fe\mapsto \widehat \rho(w\fe), \quad w\in \A(\H)^{**}
\]
is a unital completely contractive homomorphism on the von Neumann algebra $\fN=\A(\H)^{**}\fe$, and thus it is a $*$-homomorphism. In particular, the $d$-tuple $(\widehat\rho(U_1),\ldots,\widehat\rho(U_d))$ is a spherical unitary. Note also that $\wh{\rho}(\fe)$ is a central projection in the weak$^*$ closure of $\rho(\A(\H))$. 	Let $P=\wh{\rho}(\fe)\oplus 0_{\F^\bot}\in B(\H)$.
	We claim that $P$ commutes with $\pi(\fT(\H))$. 
	
	To see this, let $\G\subset \E$ denote the range of $P$, and consider the unital completely positive map $\phi:\fT(\H)\to B(\G)$ defined as
	\[
	\phi(t)=P_{\G}\pi(t)|_{\G}, \quad t\in \fT(\H).
	\]
For $a\in \A(\H)$, we see that $\phi(a)=\widehat\rho(a\fe)|_\G.$ An application of \cite[Lemma 3.6]{CTGelfand} shows that $\phi$ is a $*$-homomorphism. In turn, a classical observation of Agler \cite{agler1982} (see also \cite[Lemma 3.2]{CH2018}) reveals that $\G$ is reducing for $\pi(\fT(\H))$, so that indeed $P$ commutes with $\pi(\fT(\H))$. In particular, it commutes with $\widehat{\pi}(f)$.
%

Finally, because $\xi\in \F^\perp$ by assumption, we have that $P\xi=0$.
	For $a\in\A(\H)$, we note that
	\[ P\pi(a)\widehat\pi(f)\xi=\pi(a)\widehat{\pi}(f)P\xi=0. \]
	As $\pi(\A(\H))\widehat\pi(f)\xi$ is dense in $\F$, we have that $P=0$. 
	Therefore, $\wh{\pi}(\fe)|_{\F}=\wh{\rho}(\fe)=0$.

\end{proof}

We now arrive at the main result of this section, which is partial generalization of the classical F. and M. Riesz theorem, in the form of Corollary \ref{C:classFM2}. For a subalgebra $\B\subset B(\H)$, we use the following notation
\[
\B^\dagger=\{b^*:b\in \B\}\subset B(\H).
\]

\begin{theorem}\label{T:CommFMR}
	Let $\H$ be a unitarily invariant, regular, maximal, complete Nevanlinna--Pick reproducing kernel Hilbert space on $\bB_d$.
	Let $\tau:\fT(\H)\to \bC$ be a bounded linear functional such that $\A(\H)\subset\ker\tau$.
	Then, the restriction of $\tau$ to  $\A(\H)^\dagger$ extends weak-$*$ continuously to $\M(\H)^\dagger$.
\end{theorem}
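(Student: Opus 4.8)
The plan is to reduce the statement to the single relation $\wh\tau(\fe b^*)=0$ for every $b\in\A(\H)$, where $\wh\tau$ denotes the weak-$*$ continuous extension of $\tau$ to $\fT(\H)^{**}$, and then to feed this into an adjoint form of Lemma \ref{L:HiddenE}. First I would apply Lemma \ref{L:PolarForm} to $\tau$ to produce a unital $*$-representation $\pi:\fT(\H)\to B(\E)$, a cyclic vector $\xi$ with $\|\xi\|=\|\tau\|^{1/2}$, and a partial isometry $f\in\fT(\H)^{**}$ with $\wh\pi(f^*f)\xi=\xi$ and $\tau(t)=\ip{\pi(t)\xi,\wh\pi(f)\xi}$. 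Writing $\eta=\wh\pi(f)\xi$, the relation $\wh\pi(f^*f)\xi=\xi$ gives at once $\wh\pi(f^*)\eta=\xi$, and since $f$ is a partial isometry also $\wh\pi(ff^*)\eta=\eta$. The hypothesis $\A(\H)\subset\ker\tau$ translates into the orthogonality relation $\eta\perp\pi(\A(\H))\xi$.

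The key step -- the one requiring an idea rather than a computation -- is to apply Lemma \ref{L:CommMain} not to the pair $(\xi,f)$ but to the reversed pair $(\eta,f^*)$. With this choice, the two hypotheses of Lemma \ref{L:CommMain} read $\wh\pi\big((f^*)^*f^*\big)\eta=\wh\pi(ff^*)\eta=\eta$ and $\eta\in\big(\pi(\A(\H))\wh\pi(f^*)\eta\big)^\perp=\big(\pi(\A(\H))\xi\big)^\perp$, the latter being exactly the orthogonality relation recorded above. The lemma then yields $\wh\pi(\fe)|_{\F}=0$, where $\F=\ol{\pi(\A(\H))\xi}$. Since $\A(\H)$ is unital we have $\xi\in\F$, and therefore $\wh\pi(\fe)\xi=0$.

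With this in hand I would compute, for $b\in\A(\H)$, using that $\fe$ is central in $\fT(\H)^{**}$ (Theorem \ref{T:e}),
\[
\wh\tau(\fe b^*)=\ip{\wh\pi(\fe b^*)\xi,\eta}=\ip{\pi(b)^*\wh\pi(\fe)\xi,\eta}=0,
\]
so that $\wh\tau(b^*\fe)=0$ and hence $\wh\tau\big(b^*(I-\fe)\big)=\tau(b^*)$ for every $b\in\A(\H)$.

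Finally, to conclude I would invoke the adjoint counterpart of Lemma \ref{L:HiddenE}. Concretely, pass to the functional $\tau'(t)=\ol{\tau(t^*)}$ on $\fT(\H)$; a short check shows $\wh{\tau'}(w)=\ol{\wh\tau(w^*)}$ on $\A(\H)^{**}$, so the relation $\wh\tau(\fe a^*)=0$ becomes $\wh{\tau'}(a\fe)=0$, i.e.\ $\tau'(a)=\wh{\tau'}\big(a(I-\fe)\big)$ for all $a\in\A(\H)$. Lemma \ref{L:HiddenE} then extends $\tau'|_{\A(\H)}$ weak-$*$ continuously to $\M(\H)$, and conjugating this extension back through the weak-$*$ homeomorphism $m\mapsto m^*$ produces the sought weak-$*$ continuous extension of $\tau|_{\A(\H)^\dagger}$ to $\M(\H)^\dagger$. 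The main obstacle is the bookkeeping in the first two paragraphs -- matching the analytic hypothesis on $\tau$, which constrains $\eta$ relative to $\pi(\A(\H))\xi$, to the hypothesis of Lemma \ref{L:CommMain}, which constrains the distinguished vector relative to $\pi(\A(\H))$ applied to its $f$-image -- and the device of swapping $(\xi,f)$ for $(\eta,f^*)$ is precisely what reconciles the two.
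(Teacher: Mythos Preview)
Your proof is correct and follows essentially the same approach as the paper: both use Lemma~\ref{L:PolarForm}, Lemma~\ref{L:CommMain}, and Lemma~\ref{L:HiddenE} in the same roles. The only difference is the order of operations. The paper defines $\omega(t)=\ol{\tau(t^*)}$ \emph{first} and applies the polar form to $\omega$; then $\A(\H)^\dagger\subset\ker\omega$ translates directly into $\xi\perp\pi(\A(\H))\wh\pi(f)\xi$, so Lemma~\ref{L:CommMain} applies to the pair $(\xi,f)$ as written. Your device of applying the polar form to $\tau$ and then swapping $(\xi,f)$ for $(\eta,f^*)$ accomplishes exactly the same thing, with the conjugation deferred to the end. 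Both routes are equally valid; the paper's ordering just avoids the bookkeeping of the swap.
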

\begin{proof}
Define a bounded linear functional $\omega:\fT(\H)\to \bC$ as
\[
\omega(t)=\ol{\tau(t^*)}, \quad t\in \fT(\H).
\]
Clearly, it suffices to prove that the restriction of $\omega$ to $\A(\H)$ extends weak-$*$ continuously to $\M(\H)$.

By Lemma \ref{L:PolarForm},  there exist a unital $*$-representation $\pi:\fT(\H)\to B(\E)$, a vector $\xi\in\E$ with $\|\xi\|=\|\tau\|^{1/2}$ that is cyclic for $\pi$ and a partial isometry $f\in\fT(\H)^{**}$ such that  $\wh{\pi}(f^*f)\xi=\xi$ and
\[
\omega(t)=\langle \pi(t)\xi,\widehat\pi(f)\xi\rangle, \quad t\in \fT(\H).
\]
By assumption, we have $\A(\H)^\dagger\subset \ker \omega$, which implies that $\xi\in (\pi(\A(\H))\widehat\pi(f)\xi)^\perp$.
	Thus, it follows from Lemma \ref{L:CommMain} that $\widehat{\pi}(\fe)\widehat\pi(f)\xi=0$. Hence, for $t\in \fT(\H)$ we observe that
	\begin{align*}
	\omega(t)&=\langle \pi(t)\xi,\widehat\pi(f)\xi \rangle=\langle \pi(t)\xi,\widehat\pi((I-\fe)f)\xi \rangle\\
	&=\langle \widehat\pi(t(I-\fe))\xi,\widehat\pi(f)\xi\rangle\\
	&=\widehat\omega(t(I-\fe))
	\end{align*} 
where we used the fact that $\widehat\pi(\fe)$ commutes with $\pi(\fT(\H))$. The desired conclusion now follows immediately from  Lemma \ref{L:HiddenE}.
\end{proof}

The previous result does not yield as satisfactory a conclusion as its non-commuta\-tive counterpart Theorem  \ref{T:ACComm}, which guarantees the existence of a weak-$*$ continuous extension at the level of operator systems. The obstacle to proving such a stronger statement in the context of unitarily invariant spaces is a current lack of a replacement for Theorems \ref{T:rigidKennedy} and \ref{T:rigidity}. Nevertheless, Theorem \ref{T:CommFMR} does yield some non-trivial information, as we illustrate next with an application.

A bounded linear functional $\phi:\fT(\H)\to \bC$ is said to be \emph{Henkin} for $\A(\H)$ if its restriction to $\A(\H)$ extends weak-$*$ continuously to $\M(\H)$. We denote the set of all such functionals by $\Hen(\A(\H))$. The set $\Hen(\A(\H)^\dagger)$ of Henkin functionals for $\A(\H)^\dagger$ is defined analogously, where instead we require the existence of a weak-$*$ continuous extension on $\M(\H)^\dagger$.
Henkin functionals have been a topic of recent interest in multivariate operator theory (see \cite{CD2016duality},\cite{CD2016abscont},\cite{BicHarMcC2018},\cite{DavHar2020} and the references therein). Nevertheless, outside of the classical setting of the ball algebra \cite[Theorem 9.6.1]{rudin2008}, the exact nature of such functionals is not fully understood. Our final result sheds some light on their structure.

\begin{corollary}\label{C:HenA=HenTA}
	Let $\H$ be a unitarily invariant, regular, maximal, complete Nevanlinna--Pick reproducing kernel Hilbert space on $\bB_d$. Then,
	\[ \mathrm{Hen}(\A(\H))=\mathrm{Hen}(\A(\H)^\dagger). \]
\end{corollary}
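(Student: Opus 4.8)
The plan is to deduce the corollary from Theorem \ref{T:CommFMR}, using the characterization of Henkin functionals in Lemma \ref{L:HiddenE} and splitting a given functional along the central projection $\fe$ of Theorem \ref{T:e}. Throughout I fix $\phi\in\fT(\H)^*$ and write $\widehat\phi:\fT(\H)^{**}\to\bC$ for its weak-$*$ continuous extension; since $\A(\H)^{**}$ is weak-$*$ closed in $\fT(\H)^{**}$, the restriction of $\widehat\phi$ to $\A(\H)^{**}$ is the unique weak-$*$ continuous extension of $\phi|_{\A(\H)}$, so Lemma \ref{L:HiddenE} applies verbatim with $\widehat\phi$ in place of the bidual of the restriction. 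The key device is the decomposition $\phi=\tau+\rho$ on $\fT(\H)$, where $\tau(t)=\widehat\phi(t\fe)$ and $\rho(t)=\widehat\phi(t(I-\fe))$; both are bounded because $\fe$ is a fixed contraction in $\fT(\H)^{**}$. By Lemma \ref{L:HiddenE}, membership $\phi\in\mathrm{Hen}(\A(\H))$ is equivalent to $\widehat\phi(a\fe)=0$ for all $a\in\A(\H)$, that is, to $\A(\H)\subset\ker\tau$.

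First I would establish the inclusion $\mathrm{Hen}(\A(\H))\subset\mathrm{Hen}(\A(\H)^\dagger)$. Assume $\phi\in\mathrm{Hen}(\A(\H))$, so that $\A(\H)\subset\ker\tau$ by the previous remark. Theorem \ref{T:CommFMR} then shows that $\tau|_{\A(\H)^\dagger}$ extends weak-$*$ continuously to $\M(\H)^\dagger$. It remains to treat $\rho$, and here I expect the extension to be automatic, regardless of $\phi$. Indeed, since $b\mapsto b^*$ is a conjugate-linear weak-$*$ homeomorphism carrying $\M(\H)$ onto $\M(\H)^\dagger$ and $\A(\H)$ onto $\A(\H)^\dagger$, the functional $\rho|_{\A(\H)^\dagger}$ extends weak-$*$ continuously to $\M(\H)^\dagger$ if and only if its conjugate $\rho^\sharp\colon a\mapsto\ol{\rho(a^*)}$ extends weak-$*$ continuously to $\M(\H)$. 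Because the adjoint is weak-$*$ continuous on $\fT(\H)^{**}$, the extension of $\rho^\sharp$ is given by $\widehat{\rho^\sharp}(w)=\ol{\widehat\phi(w^*(I-\fe))}$, and I would compute, using that $\fe$ is a central projection with $\fe(I-\fe)=0$,
\[
\widehat{\rho^\sharp}(a\fe)=\ol{\widehat\phi((a\fe)^*(I-\fe))}=\ol{\widehat\phi(a^*\fe(I-\fe))}=0, \quad a\in\A(\H).
\]
By Lemma \ref{L:HiddenE} this gives $\rho^\sharp\in\mathrm{Hen}(\A(\H))$, hence $\rho|_{\A(\H)^\dagger}$ extends to $\M(\H)^\dagger$. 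Summing, $\phi|_{\A(\H)^\dagger}=\tau|_{\A(\H)^\dagger}+\rho|_{\A(\H)^\dagger}$ extends weak-$*$ continuously to $\M(\H)^\dagger$, i.e. $\phi\in\mathrm{Hen}(\A(\H)^\dagger)$.

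For the reverse inclusion I would invoke the same conjugation symmetry rather than re-run the argument. Given $\psi\in\mathrm{Hen}(\A(\H)^\dagger)$, set $\psi^\sharp(t)=\ol{\psi(t^*)}$. The adjoint correspondence above shows $\psi^\sharp\in\mathrm{Hen}(\A(\H))$ precisely because $\psi|_{\A(\H)^\dagger}$ extends to $\M(\H)^\dagger$. Applying the inclusion already proved to $\psi^\sharp$ yields $\psi^\sharp\in\mathrm{Hen}(\A(\H)^\dagger)$, and unwinding the correspondence once more this is exactly the statement that $\psi|_{\A(\H)}$ extends weak-$*$ continuously to $\M(\H)$, i.e. $\psi\in\mathrm{Hen}(\A(\H))$. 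This closes the loop and establishes equality of the two sets.

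The substantive content is carried entirely by Theorem \ref{T:CommFMR}; the corollary itself is a matter of organizing that input. Accordingly, the part demanding the most care is not conceptual but bookkeeping: keeping the several bidual extensions consistent (that $\widehat\phi|_{\A(\H)^{**}}$ is the bidual of $\phi|_{\A(\H)}$, and likewise for $\tau,\rho,\rho^\sharp$), and correctly exploiting weak-$*$ continuity of the adjoint on $\fT(\H)^{**}$ together with the centrality and idempotence of $\fe$ to see that the $(I-\fe)$-summand is always Henkin on the opposite side. If one prefers to avoid the conjugation shortcut in the reverse direction, the alternative is to mirror the forward argument using the adjoint form of Theorem \ref{T:CommFMR} (which its proof in fact establishes via $t\mapsto\ol{\tau(t^*)}$); I expect the symmetry reduction to be the cleaner route since it reuses the stated result directly.
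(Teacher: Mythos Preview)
Your proof is correct. Both your argument and the paper's reduce to Theorem \ref{T:CommFMR} by writing $\phi$ as a sum of an $\A(\H)$-annihilating piece (to which the theorem applies) and a piece that is automatically Henkin on both sides, but the decompositions differ. The paper picks any weak-$*$ continuous extension $\omega$ of $\phi|_{\A(\H)}$ to $B(\H)$ and sets $\psi=\omega-\phi$; then $\psi$ kills $\A(\H)$ and $\omega$ is trivially Henkin on $\A(\H)^\dagger$, so $\phi=\omega-\psi$ inherits the extension. You instead split along the central projection $\fe$, taking $\tau(t)=\widehat\phi(t\fe)$ and $\rho(t)=\widehat\phi(t(I-\fe))$; Lemma \ref{L:HiddenE} identifies $\tau$ as the annihilating piece, and the algebraic identity $\fe(I-\fe)=0$ shows $\rho$ is automatically Henkin on the adjoint side. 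Your route is more intrinsic to the bidual structure and makes transparent that the obstruction lives entirely in the $\fe$-summand, at the cost of the bookkeeping you flagged (compatibility of the various bidual extensions and weak-$*$ continuity of the adjoint); the paper's route is shorter because the ``automatic'' piece is weak-$*$ continuous on all of $B(\H)$ by construction, so no further verification is needed there. Both handle the reverse inclusion by symmetry.
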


\begin{proof}
	Let $\phi\in\mathrm{Hen}(\A(\H))$.
	By definition, there exists a weak-$*$ continuous functional $\omega_0$ on $\M(\H)$ such that
	\[ \omega_0|_{\A(\H)}=\phi|_{\A(\H)}. \]
	Increasing the norm if necessary, we may extend $\omega_0$ to a weak-$*$ continuous linear functional $\omega:B(\H)\to \bC$. Define $\psi:\fT(\H)\to \bC$ as
	\[ \psi(t)=\omega(t)-\phi(t), \quad t\in\fT(\H). \]
	Then $\A(\H)\subset \ker\psi$, and it follows from Theorem \ref{T:CommFMR} that $\psi|_{\A(\H)^\dagger}$ extends weak-$*$ continuously to $\M(\H)^\dagger$. Clearly, the same must be true of
	\[
	\phi|_{\A(\H)^\dagger}=\omega|_{\A(\H)^\dagger}-\psi|_{\A(\H)^\dagger}
	\]
	so indeed $\phi\in\mathrm{Hen}(\A(\H)^\dagger)$. The reverse inclusion follows similarly.	
\end{proof}

We close by remarking that in the case of the ball algebra, the previous symmetry result is a direct consequence of the known characterization of Henkin functionals due to Cole and Range \cite{cole1972} (see also \cite[Theorem 9.6.1]{rudin2008}). In the absence of such a result in general, we do not know how to prove Corollary \ref{C:HenA=HenTA} directly.

\bibliographystyle{plain}
\bibliography{biblioFMRiesz.bib}

\end{document}